\DeclareFontFamily{U}{matha}{\hyphenchar\font45}
\DeclareFontShape{U}{matha}{m}{n}{
  <-6> matha5 <6-7> matha6 <7-8> matha7
  <8-9> matha8 <9-10> matha9
  <10-12> matha10 <12-> matha12
  }{}
\DeclareSymbolFont{matha}{U}{matha}{m}{n}
\DeclareMathSymbol{\lesssim}{3}{matha}{"C0}
\newtheorem{theorem}{Theorem}[section]
\newtheorem{proposition}[theorem]{Proposition}
\newtheorem{corollary}[theorem]{Corollary}
\newtheorem{lemma}[theorem]{Lemma}
\newtheorem{remark}[theorem]{Remark}
\newtheorem{definition}[theorem]{Definition}
\newcommand{\M}{\mathcal M}
\DeclareMathOperator{\Tr}{tr}
\newcommand{\Id}{\mathrm{id}}
\newcommand{\Hs}{\mathrm H}
\newcommand{\R}{\mathbb R}
\newcommand{\bfX}{\mathbf X}
\newcommand{\eps}{\varepsilon}
\newcommand{\Nabla}{\overline\nabla}
\begin{document}

\title[Kinetic Brownian motion in configuration space]{Kinetic Brownian motion on the diffeomorphism group of a closed Riemannian manifold}

\author{\textsc{J. Angst}}\address{Univ Rennes, CNRS, IRMAR - UMR 6625, F-35000 Rennes, France}\email{jurgen.angst@univ-rennes1.fr}
\author{\textsc{ I. Bailleul}}\address{Univ Rennes, CNRS, IRMAR - UMR 6625, F-35000 Rennes, France}\email{ismael.bailleul@univ-rennes1.fr}
\author{\textsc{P. Perruchaud}}\address{Univ Rennes, CNRS, IRMAR - UMR 6625, F-35000 Rennes, France}\email{pierre.perruchaud@univ-rennes1.fr}

\subjclass{Primary 60H10, 60H30; Secondary 76N99, 58D05, 60H15}
\keywords{Diffeomorphism group; EPDiff; Stochastic Euler equation; Cartan development; Brownian flow}
\thanks{The authors thank A. Kulik for helful conversations on ergodic properties of Markov processes. I.Bailleul thanks the U.B.O. for their hospitality, part of this work was written there.}

\begin{abstract}
We define kinetic Brownian motion on the diffeomorphism group of a closed Riemannian manifold, and prove that it provides an interpolation between the hydrodynamic flow of a fluid and a Brownian-like flow.
\end{abstract}

\maketitle

\setcounter{tocdepth}{2}
{\small 
\tableofcontents
}

\section{Introduction}
\label{sec.introduction}

Kinetic Brownian motion is a purely geometric random perturbation of geodesic motion. In its simplest form, in $\mathbb{R}^d$, the sample paths of kinetic Brownian motion are $C^1$ random paths run at unit speed, with velocity a Brownian motion on the unit sphere, run at speed $\sigma^2$, for a speed parameter $\sigma>0$. More formally, it is a hypoelliptic diffusion with state space $\mathbb{R}^d\times\mathbb{S}^{d-1}$, solution to the stochastic differential equation
\begin{equation*}\begin{split}
dx^\sigma_t &= v^\sigma_t\,dt,   \\
dv^\sigma_t &= \sigma\,P_{v^\sigma_t}({\circ dW_t}),
\end{split}\end{equation*}
for $P_a : \mathbb{R}^d\rightarrow \langle a\rangle^\perp$, the orthogonal projection on the orthogonal of $\langle a\rangle$, for $a\neq 0$ in $\mathbb{R}^d$, and $W$ a standard $\mathbb{R}^d$-valued Brownian motion. If $\sigma=0$, we have a straight line motion with constant velocity. For a fixed $0<\sigma<+\infty$, we have a $C^1$ random path, whose typical behavior is illustrated in Figure \ref{fig.1} below.\par
\begin{figure}[ht]\label{fig.1}
\centering
\includegraphics[scale=.25]{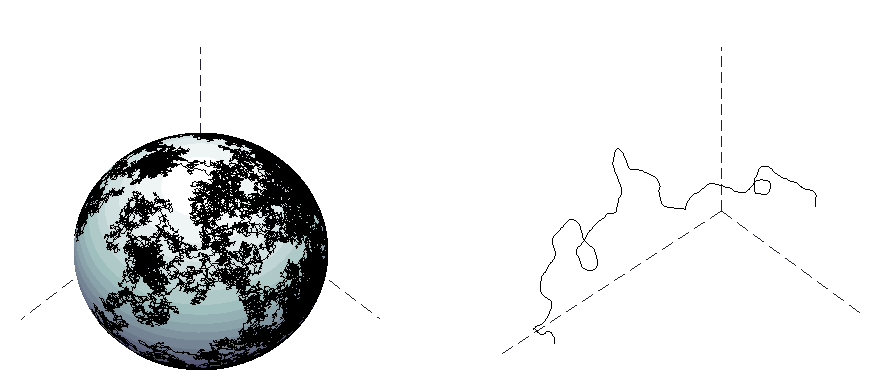}
\caption{Brownian motion on the sphere and its integral path in $\mathbb{R}^d$.}
\end{figure} 
For $\sigma$ increasing to $\infty$, the exponentially fast decorrelation of the velocity process $v^\sigma$ on the sphere implies that the process $x^\sigma$ converges to the constant path $x_0$, if the latter is fixed independently of $\sigma$. One has to rescale time and look at the evolution at the time scale $\sigma^2$ to see a non-trivial limit. It is indeed elementary to prove that the time rescaled position process $(x^\sigma_{\sigma^2t})_{0\leq t\leq 1}$ of kinetic Brownian motion converges weakly in $C\big([0,1],\mathbb{R}^d\big)$ to a Brownian motion with generator $\frac{4}{d(d-1)}\,\Delta_{\mathbb R^d}$. See Figure 2 below for an illustration in the setting of the flat $2$-dimensional torus. This homogenization result is in fact valid on a general finite dimensional Riemannian manifold $M$, under very mild geometric assumptions.
\begin{figure}[ht]
\label{Picture2}
\centering
\includegraphics[scale=0.25]{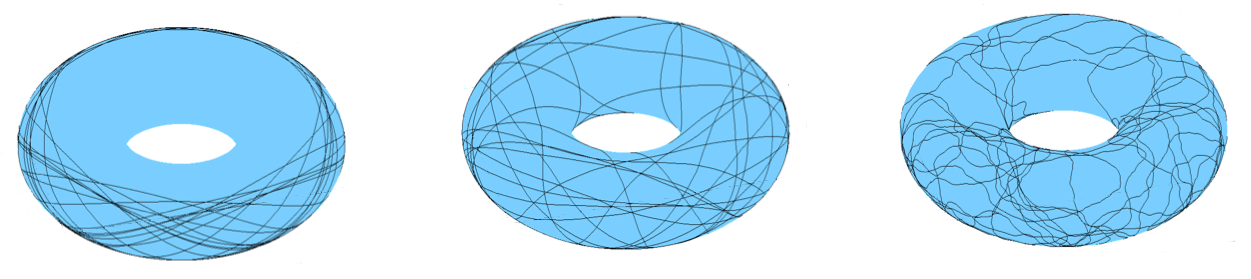}
\includegraphics[scale=0.25]{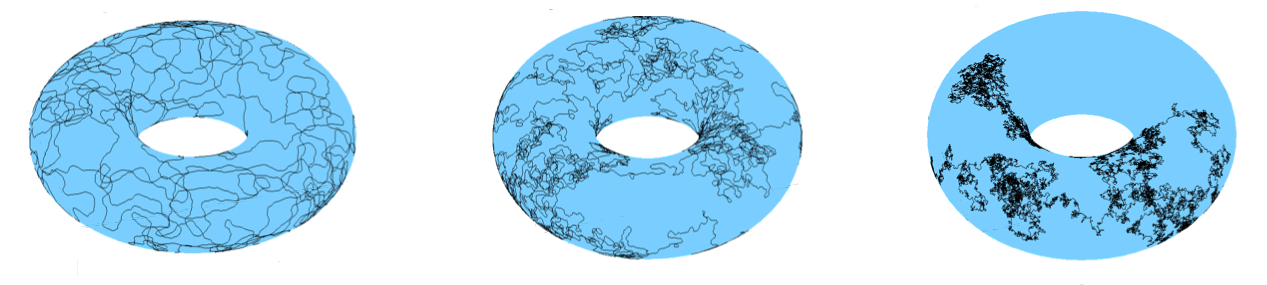}
\caption{Sample paths of kinetic Brownian motion $\big(x^\sigma_{\sigma^2t}\big)_{0\leq t\leq 1}$ as $\sigma$ increases.}
\end{figure} 
Kinetic Brownian motion on a $d$-dimensional Riemannian manifold $M$ is defined as Cartan development $\big(m^\sigma_t,\dot m^\sigma_t\big)$ in the unit tangent bundle $T^1M$ of $M$ of kinetic Brownian motion in $\mathbb{R}^d$. It is a geodesic for $\sigma=0$, and a $C^1$ random path for a finite positive value of $\sigma$. It was first proved by X.-M. Li in \cite{XueMei0} that the time-rescaled position process $(m^\sigma_{\sigma^2t})_{0\leq t\leq 1}$ converges weakly to Brownian motion with generator $\frac{4}{d(d-1)}\,\Delta_M$. The manifold $M$ was assumed to be compact and martingale methods were used to prove that homogenization result. X.-M. Li extended this result in \cite{XueMei1} to non-compact manifolds subject to a growth condition on their curvature tensor. In \cite{ABT}, Angst, Bailleul and Tardif gave the most general result, assuming only geodesic and stochastic completeness, using rough paths theory as a working horse to transport a rough path convergence result about kinetic Brownian motion in $\mathbb{R}^d$ to the manifold setting. See also \cite{XueMei2} for further results in homogeneous spaces, and \cite{PierreFiniteDim} for a generalization of the homogenization result of \cite{ABT} to anisotropic kinetic Brownian motion, or more general Markov processes on $T^1M$. Note that the dynamically obvious convergence of the unrescaled kinetic Brownian motion to the geodesic motion has been studied from the spectral point of view in \cite{Drouot}, for compact manifolds with negative curvature, showing that the $L^2$ spectrum of the generator of the unrescaled kinetic Brownian motion converges to the Pollicott-Ruelle resonances of $M$. Other examples of homogenization results for Langevin-type processes include works by Hottovy and co-authors, amongst others; see e.g. \cite{BirrellHottovyVolpeWehr, HerzogHottovyVolpe, BirrellWehr, LimWehrLewenstein} for quantitative convergence results. See also \cite{Soloveitchik, Kolokoltsov, VassiliHilbert, Gliklikh} for other works on Langevin dynamics in a Riemannian manifold.

\smallskip

This kind of homogenization result certainly echoes Bismut's program about his hypoelliptic Laplacian \cite{Bismut05, Bismut}, whose probabilistic starting point is a similar interpolation result for Langevin process in $\mathbb{R}^d$ and its Cartan development on a Riemannian manifold. The dynamics is lifted to a dynamics on the space of differential forms to take advantage of the correspondence between the cohomology of differential forms and homology of $M$, via index-type theorems. See \cite{BismutOrbital, Bismut, BismutEta, ShuShen} for a sample of the deep results obtained by Bismut and co-authors on the hypoelliptic Laplacian. 

Note also that kinetic Brownian motion is the Riemannian analogue of its Lorentzian counterpart, introduced first by Dudley in \cite{Dudley} in Minkowski spacetime in the 60's. See the far reaching related works \cite{FranchiLeJanRelDiff1, BailleulRelDiff, FranchiLeJanRelDiff2, BailleulFranchi}, on relativistic diffusions in a general Lorentzian setting. No homogenization result is expected for these purely geometric diffusion processes, unless one has an additional non-geometric ingredient, e.g. in the form of a relativistic fluid flow, like in \cite{AngstFranchi}.

\smallskip

The object of the present work is to define and study kinetic Brownian motion in the diffeomorphism group $\mathscr{M}$, or volume preserving diffeomorphism group $\mathscr{M}_0$, of a closed Riemannian manifold $M$. As in the finite dimensional setting, we prove that it provides an interpolation between the geodesic flow and a Brownian flow, as the noise intensity parameter $\sigma$ ranges from $0$ to $\infty$. For $\sigma=0$, the motion in each diffeomorphism group is geodesic, and it corresponds to the flow of the solutions of Euler's equation in the case of $\mathscr{M}_0$, after the seminal works of Arnold \cite{Arnold} and Ebin \& Marsden \cite{EbinMarsden}. When considered in the setting of volume preserving diffeomorphisms, the Eulerian picture of kinetic Brownian motion provides a family of random perturbations of Euler's equations for the hydrodynamics of an incompressible fluid. There has been much work recently on random perturbations of Euler's equations, following Holm's seminal article \cite{Holm15}. See \cite{GayBalmazHolm, CruzeiroHolmRatiu, CrisanFlandoliHolm, DrivasHolmCirculationCharact, KIWFormulaHolm} for a sample. The structure of the noise in these works is intrinsically linked to the group structure of the diffeomorphism group, and it amounts to perturbe Euler's equation for the velocity field by an additive Brownian term, with values in a space of vector fields on the fluid domain $M$. Our point of view is purely Riemannian, and does not appeal to the group structure of the diffeomorphism group of the fluid domain $M$. As in the above finite dimensional setting, we define kinetic Brownian on the diffeomorphism group as the Cartan development of its `flat' counterpart. Unlike the group-oriented point of view, where the running time diffeomorphism is sufficient to describe its infinitesimal increment from the noise, we need here a notion of frame of the tangent space of the running diffeomorphism to build its increment from the noise. We prove that each component of the energy spectrum of the Eulerian velocity field is ergodic, and give an explicit description of its invariant measure. We also have an explicit description of the invariant measure of the energy of the Eulerian velocity field.   

\smallskip

On the technical side, we use rough paths theory to transport a weak convergence result for the flat kinetic Brownian motion taking values in the tangent space to the configuration space $\mathscr{M}$, or $\mathscr{M}_0$, to a weak convergence result for the solution of a differential equation controlled by that flat kinetic Brownian motion. We use for that purpose the continuity of the It\^o-Lyons solution map to a controlled ordinary differential equation, in the present infinite dimensional setting. This allows to bypass a number of difficulties that would appear otherwise if using the classical martingale problem approach, as in \cite{XueMei0, XueMei1}. All we need about rough paths theory is recalled in Section \ref{SubsectionRP}.

\smallskip

From a geometric point of view, the tangent space to the configuration space can naturally be seen as an infinite dimensional Hilbert space.  For this reason, we define and study in Section \ref{SectionHilbertKBM} kinetic Brownian motion on a generic infinite dimensional Hilbert space $H$. We provide an explicit description of the invariant measure of the velocity process in Section \ref{SubsectionBMSphere}, and we establish exponential decorrelation identities for the latter in Section \ref{SubsectionMixing}. The invariance principle for the position process associated to the time-rescaled $H$-valued kinetic Brownian motion is then established in Section \ref{SubsectionCvgceX}. With the rough paths tools introduced in Section \ref{SubsectionRP}, Section  \ref{SubsectionCvgceRP} is devoted to the proof of the fact that the canonical rough path above the time-rescaled position process converges weakly as a rough path to the Stratonovich Brownian rough path of a Brownian motion with an explicit covariance. Elements of the geometry of the configuration spaces $\mathscr{M}$ and $\mathscr{M}_0$ are recalled in Section \ref{SectionGeometry}. We develop in particular in Section \ref{SubsectionParallelTransport} and Section \ref{SubsectionCartan} the material needed to talk about Cartan development operation as solving an ordinary differential equation driven by smooth vector fields. The final homogenisation result, proving the interpolation between geodesic and Brownian flows on the configuration spaces, is proved in Section \ref{SectionKBM} using the robust tools of rough paths theory. Appendix \ref{AppendixCartanM0} contains the proof of a technical result about Cartan development in $\mathscr{M}_0$.

\bigskip

\noindent \textbf{Notations.} We gather here a number of notations that are used throughout the article.

\smallskip

\begin{itemize}
   \item The letter $\gamma$ stands for a Gaussian measure on a Hilbert space $H$, with covariance $C_\gamma : H^*\times H^* \rightarrow \mathbb{R}$, and associated operator $\overline{C}_\gamma : H\rightarrow H$. The scalar product and norm on $H$ are denoted by $(\cdot, \cdot)$ and $\|\cdot\|$, respectively. \vspace{0.1cm}

   \item We denote by $\mathcal{H}$ the Cameron-Martin space of the measure $\gamma$.   \vspace{0.1cm}

   \item We endow the algebraic tensor space $H\otimes_a H$ with its natural Hilbert norm. This amounts to identify $H\otimes H$ with the space of Hilbert-Schmidt operators on $H$.   \vspace{0.1cm}
   
   \item We use the notation $A\lesssim_p B$ for an inequality of the form $A\leq cB$, with a constant $c$ depending only on $p$.
\end{itemize}

\section{Kinetic Brownian motion in a Hilbert space}
\label{SectionHilbertKBM}

\subsection{Brownian motion on a Hilbert sphere}
\label{SubsectionBMSphere}

We first recall basic results on Brownian motion in $H$, and refer the reader to the nice lecture notes \cite{LNHairerSPDE, Stroock} for short and detailed accounts. 

\smallskip

Recall that a \textbf{Gaussian probability measure on $H$} is a Borel measure $\gamma$ such that $\ell^*\gamma$ is a real Gaussian probability on $\mathbb{R}$, for every continuous linear functional $\ell : H \rightarrow\mathbb{R}$. Fernique's theorem \cite{Fernique} ensures that 
$$
\int_H\exp\big(a\|x\|^2\big)\,\gamma(dx) < \infty,
$$
for a small enough positive constant $a$. It follows that the covariance 
$$
C_\gamma(\ell,\ell') := \int \ell(x)\ell'(x)\,\gamma(dx), \quad \ell,\ell'\in H^*
$$
is a well-defined continuous bilinear operator on $H^*\times H^*$. One can then define a continuous symmetric operator $\overline{C}_\gamma : H\rightarrow H$, by the identity 
$$
\big(\overline{C}_\gamma(h),k\big) = C(h,k),
$$ 
for all $h,k\in H$. It has finite trace equal to
$$
\textrm{tr}(\overline{C}_\gamma) = \int \|x\|^2\,\gamma(dx).
$$
Conversely, one can associate to any trace-class symmetric operator $\overline{C} : H\rightarrow H$, a Gaussian measure $\gamma$ on $H$ whose covariance $C_\gamma(\ell,\ell) = \overline{C}(\ell,\ell)$, for all $\ell\in H$. Since $\overline{C}_\gamma$ is compact, there exists an orthonormal basis $(e_n)$ of $H$, such that 
$$
\overline{C}_\gamma(e_n) = \alpha_n^2 e_n, 
$$
for non-negative and non-increasing eigenvalues $\alpha_n$ with $\sum \alpha_n^2 < \infty$. We define a Hilbert space $\mathcal{H}$ by choosing $\big(\alpha_ne_n\big)$ as an orthonormal basis for it. The space $\mathcal{H}$ is continuously embeded inside $H$. Let $(X^n)$ stand for a sequence of independent, identically distributed, real-valued Gaussian random variables with zero mean and unit variance, defined on some probability space $(\Omega,\mathcal{F},\mathbb{P})$. Then the series 
$$
\sum_n X^n \alpha_n e_n
$$
converges in $L^2(\Omega, H)$, and has distribution $\gamma$.

\smallskip

Fix a positive time horizon $T\in (0,\infty]$. An \textbf{$\mathcal{H}$-Brownian motion in $H$}, on the time interval $[0,T)$ is a random $H$-valued continuous path $W$ on $[0,T)$, with stationary, independent increments such that the distribution of $W_1$ is a Gaussian probability measure $\gamma$ on $H$. A simple construction is provided by taking a sequence $(W^n_t)$ of independent, identically distributed, real-valued Brownian motions, and setting
$$
W_t := \sum_n W^n_t \alpha_n e_n.
$$
Denote by $S$ the unit sphere of $H$, and let 
$$
P_a : H\rightarrow H
$$ 
stand for the orthogonal projection on $\langle a\rangle^\perp$, for $a\neq 0$. The $\mathcal{H}$-\textbf{spherical Brownian motion} $v^\sigma_t$ on $S$ is defined as the solution to the Stratonovich stochastic differential equation
\begin{equation}\label{def.vS}
dv^\sigma_t = \sigma\,P_{v^\sigma_t}({\circ dW_t})
\end{equation}
associated to a given initial condition $v_0^\sigma\in S$; it is defined for all times. The speed parameter $\sigma$ is a non-negative real number. Write $Z$ for $\int_H \frac1{\|u\|}\gamma(du)$.

\begin{theorem}   \label{PropInvariantMeasure}
The image under the projection $u\mapsto u/\|u\|$ of the measure $\frac{1}{Z}\frac1{\|u\|}\gamma(du)$ in the ambiant space $H$ is a probability measure $\mu$ on $S$ that is invariant for the dynamics of $v^\sigma_t$, for any positive speed parameter $\sigma$.
\end{theorem}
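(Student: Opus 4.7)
The plan is to prove the stronger property that $\mu$ is \emph{reversible} for the dynamics of $v^\sigma$, which forces invariance. Since only the overall time-scale depends on $\sigma$, so that the generators satisfy $\mathcal{L}^\sigma = \sigma^2 \mathcal{L}^1$, I would restrict to $\sigma = 1$ and write $\mathcal{L} = \mathcal{L}^1$. Using the spectral decomposition $\overline{C}_\gamma e_n = \alpha_n^2 e_n$ introduced in the preamble, the driving noise has the representation $W_t = \sum_n \alpha_n e_n W_t^n$ for i.i.d.\ real Brownian motions $W^n$, so the defining Stratonovich SDE takes the H\"ormander form
\[
dv = \sum_n V_n(v)\circ dW^n, \qquad V_n(v) = \alpha_n P_v e_n,
\]
with generator $\mathcal{L}f = \frac{1}{2}\sum_n V_n(V_n f)$. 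The goal is to verify the identity
\[
\int_S f\,\mathcal{L}g\,d\mu \;=\; -\mathcal{E}(f,g), \qquad \mathcal{E}(f,g) := \frac{1}{2}\sum_n \int_S (V_n f)(V_n g)\,d\mu,
\]
on a core of smooth cylinder test functions; this symmetry of $\mathcal{L}$ in $L^2(S,\mu)$ yields reversibility, and in particular invariance, of $\mu$.

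To establish the identity I would pull back to $H$. Writing $F(u) = f(u/\|u\|)$ and $G(u) = g(u/\|u\|)$ for the radially-invariant extensions, and using the relation $V_n g(u/\|u\|) = \|u\|\,\bigl(\alpha_n e_n,\nabla G(u)\bigr)$ together with the analogous second-order expression for $\|u\|^{-1}(\mathcal{L}g)(u/\|u\|)$ in terms of $\nabla G$ and $\nabla^2 G$, one can rewrite
\[
\int_S f\,\mathcal{L}g\,d\mu \;=\; \frac{1}{Z}\int_H F(u)\,(\mathcal{L}g)(u/\|u\|)\,\|u\|^{-1}\gamma(du)
\]
and apply the Cameron-Martin integration-by-parts formula for $\gamma$ to transfer half of the derivatives from $G$ back onto $F$. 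The decisive cancellation is that the radial contribution $-u/\|u\|^{3}$ produced by differentiating the weight $\|u\|^{-1}$ combines with the Cameron-Martin boundary term coming from the Gaussian density to reproduce exactly the Stratonovich-to-It\^o drift hidden in $\mathcal{L}$; the remaining terms reassemble into the symmetric tangential quadratic form $-\mathcal{E}(f,g)$. The same algebraic cancellation is already visible in the finite-dimensional case, where a direct computation shows $\mu$ has density proportional to $\bigl(v^T\overline{C}_\gamma^{-1}v\bigr)^{-(d-1)/2}$ against the uniform surface measure.

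The main technical obstacle is that the weight $\|u\|^{-1}$ is not a cylinder function, so the infinite-dimensional IBP cannot be applied to it naively. The natural remedy is a finite-dimensional approximation: restrict to cylinder $f,g$ depending only on $e_1,\ldots,e_N$, truncate $\sum_n$ at $N$, and establish $\int f\,\mathcal{L}_N g\,d\mu_N = -\mathcal{E}_N(f,g)$ on the finite-dimensional subspace (where $\mu_N$ has an explicit smooth density and the classical Gaussian IBP applies without difficulty). One then passes to $N\to\infty$, using the trace-class condition $\sum_n \alpha_n^2 < \infty$ to control the H\"ormander sum and Fernique's integrability estimate to dominate the $\|u\|^{-1}$ singularity uniformly in $N$, and thereby obtain the desired identity, hence the invariance of $\mu$, in the limit.
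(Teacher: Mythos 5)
Your route is genuinely different from the paper's, and in fact aims at a stronger conclusion: reversibility of $\mu$ via the Dirichlet form identity $\int f\,\mathcal{L}g\,d\mu = -\mathcal{E}(f,g)$, whereas the paper only records invariance. The key device the paper uses, and which your sketch does not, is the $H$-valued lift $(u^\sigma_t)$ solving $du^\sigma = -\frac{\sigma^2}{2}\|u^\sigma\|^2u^\sigma\,dt + \sigma\|u^\sigma\|\circ dW$; a short It\^o computation shows that $u^\sigma/\|u^\sigma\|$ is equal in law to $v^\sigma$, which reduces invariance of $\mu$ on the sphere $S$ to invariance of the unnormalised measure $\|u\|^{-1}\gamma$ on the flat ambient space $H$. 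That last point is then dispatched in a few lines by Malliavin integration by parts, exploiting exactly the algebraic coincidence $D\|W\| = V\|W\| = W/\|W\|$ that you identify; the virtue of the lift is that this computation happens directly in infinite dimensions, with no finite-dimensional approximation. Your scheme, by contrast, needs a genuinely non-trivial passage to the limit in two places: for a cylinder function $g$ of $v_1,\dots,v_N$, the generator $\mathcal{L}g=\frac12\sum_n V_n^2g$ receives contributions from \emph{all} $n$, since $V_n(v)=\alpha_n(e_n-v_nv)$ has a nonzero tangential component in the cylinder directions even for $n>N$, so your truncated $\mathcal{L}_N$ is not simply the restriction of $\mathcal{L}$; and your approximating measures $\mu_N$ live on $S^{N-1}\subset\R^N$ while $\mu$ lives on $S\subset H$, so the convergence $\int_{S^{N-1}}f\,\mathcal{L}_Ng\,d\mu_N\to\int_S f\,\mathcal{L}g\,d\mu$ requires controlling conditioned marginals of $\mu$, not merely integrals of a fixed test function. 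Both issues look tractable given $\sum_n\alpha_n^2<\infty$, but the resulting argument is considerably longer than the paper's; the compensating gain, if carried through, is the symmetry of the generator in $L^2(S,\mu)$, which the paper's lift argument does not establish.
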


This statement generalizes Proposition 1.1 of \cite{PierreFiniteDim} to the present infinite dimensional setting. The above description of the invariant measure $\mu$ as an image measure under the projection map actually coincides with the finite dimensional description given in the latter reference.

\begin{proof}
When written in It\^o form, the stochastic differential equation \eqref{def.vS} defining the process $(v^\sigma_t)_{t \geq 0}$ reads 
\begin{equation} \label{eq.ito}
d v_t^{\sigma} = -\frac{\sigma^2}{2}  \Big( \Tr(\overline{C}_\gamma) v_t^{\sigma} + \overline{C}_\gamma(v_t^{\sigma}) - 2 C_{\gamma}(v_t,v_t) v_t^{\sigma}\Big)dt + \sigma \,P_{v^\sigma_t}({dW_t}),
\end{equation}
and setting $v^{\sigma,i}_t:=\langle v^{\sigma}_t, e_i \rangle$, for any integer $i$, we have
\begin{equation*}\begin{split}
d v^{\sigma,i}_t &= -\frac{\sigma^2}{2}  \left[ \sum_{n} \alpha_n^2+\alpha_i^2  - 2\sum_{n} \alpha_n^2 |v^{\sigma,n}_t|^2  \right] v^{\sigma,i}_t \, dt   \\
&\quad+\sigma \left[  \alpha_i d \, W^i_t- v^{\sigma,i}_t \sum_n \alpha_n v^{\sigma,n}_t d W^n_t\right].
\end{split}\end{equation*}
As in the finite dimensional anisotropic case treated in \cite{PierreFiniteDim}, it is actually easier to work with an $H$-valued lift of this $S$-valued process. We introduce for that purpose the process $(u_t^{\sigma})_{t \geq 0}$ solution of the Stratonovich stochastic differential equation
$$
d u_t^{\sigma} = - \frac{\sigma^2}{2} \| u_t^{\sigma} \|^2 u_t^{\sigma} dt + \sigma \| u_t^{\sigma}\| {\circ d}W_t;
$$
equivalently, in It\^o form and coordinate-wise, setting  $u^{\sigma,i}_t:=\langle u^{\sigma}_t, e_i \rangle$  as above, we have
$$
d u_t^{\sigma,i} = \frac{\sigma^2}{2}  \left( -  \| u_t^{\sigma}\|^2 + \alpha_i^2 \right) u_t^{\sigma,i} dt + \sigma \| u_t^{\sigma}\| \alpha_i d W_t^i.
$$
A direct application of It\^o's formula then shows that $u^{\sigma,i}_t/\| u_t^{\sigma} \|$ satisfies the same stochastic differential equation as $v^{\sigma,i}_t$, for all $i$, so the two $S$-valued processes $(v^\sigma_t)_{t \geq 0}$ and $(u^\sigma_t/\|u^\sigma_t\|)_{t \geq 0}$ have the same distributions. As in the finite dimensional case, one can then check by a direct computation that the measure $\|u\|^{-1}\gamma(du)$ on $H$ is invariant for the processes $(u^\sigma_t)$; this implies the statement of Theorem \ref{PropInvariantMeasure}.

\smallskip

Alternatively, one can bypass computations and argue using Malliavin calculus as follows. Denote by $L$ the infinitesimal generator of the process $(u^\sigma_t)$.  Set $V(u):=u/\|u\|^2$ for $u\neq 0$, and let $\Delta_{\gamma}$ denote the Laplace operator associated with the covariance $C_{\gamma}$ with weights $(\alpha_n^2)$,. We then have for any test function $f$ and any $u \in H$ 
$$
Lf(u) = \frac{\sigma^2}{2}  \| u \|^2 (L_0 f)(u),
$$
with
$$
(L_0 f)(u) := \Delta_{\gamma} f(u) - u \nabla f(u) +  C_{\gamma}\big( V(u), \nabla f(u)\big).
$$
One then has for any test function $f$, with usual notations $D$ for the gradient and $\delta$ for the divergence,
\begin{equation*}\begin{split}
\int_H Lf(u) \|u\|^{-1}\gamma(du) &= \frac{\sigma^2}{2} \int_H L_0 f(u) \|u\| \gamma(du)   \\
&= \sigma^2\,\mathbb E\Big[\big( -\delta D F + \langle V, DF \rangle_{C_\gamma}\big) \, \|W\| \Big]   \\
&= \mathbb E\Big[ \big( -\delta \underbrace{D \|W\|}_{= \frac{W}{\|W\|}}  + \delta  \underbrace{V \|W\|}_{= \frac{W}{\|W\|}}  \big)  F \Big]=0.
\end{split}\end{equation*}
\end{proof}

We prove in Section \ref{SubsectionMixing} that the velocity process $(v^\sigma_t)$ converges exponentially fast in Wasserstein distance to the invariant probability measure $\mu$ of Theorem \ref{PropInvariantMeasure}, for any initial velocity $v_0$, despite the possible lack of strong Feller property of the associated semigroup. An invariance principle for the time-rescaled position process $(x^\sigma_{\sigma^2t})$ is obtained as a consequence in Section \ref{SubsectionCvgceX}. We recall in Section \ref{SubsectionRP} what we need from rough paths theory in this work, and prove in Section \ref{SubsectionCvgceRP} that the canonical rough path associated to the time-rescaled process $(x^\sigma_{\sigma^2t})$ converges weakly as a rough path to an explicit Stratonovich Brownian rough path.

\subsection{Exponential mixing of the velocity process}
\label{SubsectionMixing}

We consider in this section the mixing properties of the spherical process $(v^\sigma_t)_{t \geq 0}$ with unit speed parameter $\sigma=1$. To simplify the expressions, we drop momentarily the exponents $\sigma$ from all our notations. Our objective is to show that the spherical process 
$$
(v_t)_{t \geq 0}= (v^1_t)_{t \geq 0}
$$ 
is exponentially mixing. Recall that the $1$ and $2$-Wasserstein distances are defined for any probability measures $\mu,\nu$ on $S$ by the identities
\begin{equation*}\begin{split}
\mathcal{W}_2(\lambda,\nu) &= \inf\Big\{\mathbb E\big[\|X-Y\|^2\big] ; X\sim\lambda, Y\sim\nu\Big\},   \\
\mathcal{W}_1(\lambda,\nu) &= \inf\Big\{\mathbb E\big[\|X-Y\|\big] ; X\sim\lambda, Y\sim\nu\Big\}   \\
											   &= \sup\left\{\int f\,d(\lambda - \nu) ; |f|_\mathrm{Lip}\leq1\right\}, 
\end{split}\end{equation*}
where the infimum is taken over all couplings $\mathbb P$ of $X\sim\lambda$ and $Y\sim\nu$, and the supremum over all $1$-Lipscthiz functions $f: S\to\R$. The first two equalities are definitions, the last one is the Kantorovich-Rubinstein duality principle. Note that $\mathcal{W}_1 \leq \mathcal{W}_2$.

\begin{proposition}   \label{prop.exponentialmixing}
Assume that 
\begin{equation}   \label{EqTraceCondition}
3\alpha_0^2<\Tr(\overline{C}_{\gamma}).
\end{equation}
There exists a positive time $\tau$ such that for any probability measures $\lambda$ and $\nu$ on the unit sphere $S$ of $H$, we have 
$$
\mathcal W_2(P_t^*\lambda,P_t^*\nu) \leq e^{-t/\tau}\mathcal W_2(\lambda,\nu),
$$
for all $t \geq 0$. In particular, the invariant measure $\mu$ is unique, and for any probability measure $\lambda$ on the sphere $S$, and $t \geq 0$, we have
\begin{equation}   \label{EqExponentialConvergence}
\mathcal W_2(P_t^*\lambda,\mu) \leq 2 e^{-t/\tau} .
\end{equation}
\end{proposition}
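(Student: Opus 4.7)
The natural approach is a synchronous coupling argument. I would start two copies $(v_t)$ and $(w_t)$ of the spherical Brownian motion from arbitrary points in $S$, both driven by the same $\mathcal H$-Brownian motion $W$, and show that $\mathbb E\|v_t-w_t\|^2$ decays exponentially at a rate we can read off explicitly from the trace condition. Since the two processes remain on the unit sphere, $\|v_t-w_t\|^2 = 2 - 2\langle v_t,w_t\rangle$, so it is enough to analyse the scalar product $\rho_t := \langle v_t,w_t\rangle$.

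The first main step is an It\^o computation for $\rho_t$. Starting from the It\^o form \eqref{eq.ito} of the SDE with $\sigma = 1$ and applying the product rule, the drift of $\langle v_t, dw_t\rangle + \langle w_t, dv_t\rangle$ is
$$
-\bigl[\Tr(\overline{C}_\gamma)\,\rho_t + \langle \overline{C}_\gamma v_t, w_t\rangle - (\langle \overline{C}_\gamma v_t, v_t\rangle + \langle \overline{C}_\gamma w_t, w_t\rangle)\rho_t\bigr]dt,
$$
while the cross variation of the two martingale parts $\int P_{v_s}dW_s$ and $\int P_{w_s}dW_s$ equals
$$
\Tr\bigl(P_{v_t}\overline{C}_\gamma P_{w_t}\bigr)\,dt = \bigl[\Tr(\overline{C}_\gamma) - \langle \overline{C}_\gamma v_t,v_t\rangle - \langle \overline{C}_\gamma w_t,w_t\rangle + \rho_t\,\langle \overline{C}_\gamma v_t,w_t\rangle\bigr]dt.
$$
Summing and factorising, the drift of $\rho_t$ turns out to be
$$
(1-\rho_t)\bigl[\Tr(\overline{C}_\gamma) - \langle \overline{C}_\gamma v_t,v_t\rangle - \langle \overline{C}_\gamma w_t,w_t\rangle - \langle \overline{C}_\gamma v_t,w_t\rangle\bigr]dt,
$$
plus a local martingale term $dN_t$.

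The second step is the key estimate: each of $\langle \overline{C}_\gamma v_t,v_t\rangle$, $\langle \overline{C}_\gamma w_t,w_t\rangle$, and by Cauchy--Schwarz $|\langle \overline{C}_\gamma v_t,w_t\rangle|$, is bounded by the operator norm $\alpha_0^2$ of $\overline{C}_\gamma$, so the bracket is bounded below by $\Tr(\overline{C}_\gamma) - 3\alpha_0^2 =: 2/\tau > 0$ under hypothesis \eqref{EqTraceCondition}. Since $\|v_t - w_t\|^2 = 2(1-\rho_t)$, this yields
$$
\frac{d}{dt}\mathbb E\|v_t-w_t\|^2 \leq -\frac{2}{\tau}\,\mathbb E\|v_t-w_t\|^2,
$$
after checking that $N$ is a genuine martingale (harmless, as everything is bounded on $S$). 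Gr\"onwall's lemma gives $\mathbb E\|v_t-w_t\|^2 \leq e^{-2t/\tau}\|v_0-w_0\|^2$.

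The last step is routine lifting to arbitrary measures. Given $\lambda$ and $\nu$ on $S$, pick a $\mathcal W_2$-optimal coupling $\pi$ of them, sample $(V_0,W_0)\sim\pi$ and run $(v_t,w_t)$ synchronously from $(V_0,W_0)$; conditioning on the initial data and integrating the above bound against $\pi$ yields the claimed contraction $\mathcal W_2(P_t^*\lambda,P_t^*\nu) \leq e^{-t/\tau}\mathcal W_2(\lambda,\nu)$. Uniqueness of the invariant measure is then immediate ($\mathcal W_2(\mu,\mu')=\mathcal W_2(P_t^*\mu,P_t^*\mu')\to 0$), and the final inequality \eqref{EqExponentialConvergence} follows from the trivial bound $\mathcal W_2(\lambda,\mu) \leq \mathrm{diam}(S) = 2$. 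The only delicate point in this plan is keeping track of the cross-variation term in the Hilbert-space-valued It\^o calculus and checking the cancellations that produce the clean factor $(1-\rho_t)$; everything else is bookkeeping.
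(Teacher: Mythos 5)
Your proposal is correct and follows essentially the same route as the paper: a synchronous coupling of two copies of the spherical diffusion, It\^o's formula for $N_t=1-\langle v_t,w_t\rangle$ yielding the drift $-(1-\rho_t)\big[\Tr(\overline{C}_\gamma)-C_\gamma(v_t,v_t)-C_\gamma(w_t,w_t)-C_\gamma(v_t,w_t)\big]$, the bound of each covariance term by $\alpha_0^2$, Gr\"onwall, and the lift to $\mathcal W_2$ via an optimal coupling of the initial laws. The cross-variation bookkeeping you flag as the delicate point indeed checks out and matches the paper's displayed formula.
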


The role of the trace condition \eqref{EqTraceCondition} will be clear from the proof. If we have the freedom to choose the covariance $C_\gamma$ of the Brownian noise, this is not a constraint. Note that the rougher the noise, that is the more slowly the sequence of the eigenvalues $\alpha_n$ converges to $0$, the easier it is to satisfy condition \eqref{EqTraceCondition}. We shall see in Section \ref{SectionKBM} that it holds automatically in a number of relevant examples of random dynamics in the configuration space of a fluid flow.

\begin{proof}
Denote by $\mathbb P$ the law of the Brownian motion $(B_t)$ with covariance $C_{\gamma}$, and by $\mathbb P_v$ the law of the solution of Equation \eqref{def.vS}  with $\sigma=1$, starting from $v\in S$. Denote by $\mathbb E$ and $\mathbb E_v$ the associated expectations operators. Recall that the notation $(a,b)$ stands for the scalar product of $a$ and $b$ in $H$. Fix $v_0, w_0\in  S$, and consider the two diffusion processes $(v_t)$ and $(w_t)$, started from $v_0$ and $w_0$, respectively, and solutions of the It\^o stochastic differential equations
\begin{equation*}\begin{split}
d v_t &= \displaystyle{-\frac{1}{2}  \Big(\Tr(\overline{C}_\gamma) v_t + \overline{C}_\gamma(v_t) - 2 C_{\gamma}(v_t,v_t) v_t\Big)dt +  \,P_{v_t}({dW_t})},   \\
d w_t &= \displaystyle{-\frac{1}{2}  \Big(\Tr(\overline{C}_\gamma) w_t + \overline{C}_\gamma(w_t) - 2 C_{\gamma}(w_t,w_t) w_t\Big)dt +  \,P_{w_t}({dW_t})}.
\end{split}\end{equation*}
Comparing with Equation \eqref{eq.ito}, it is clear that $(v_t)$ has law $\mathbb P_{v_0}$ and $(w_t)$ has law $\mathbb P_{w_0}$. Moreover, It\^o's formula yields 
\begin{equation*}\begin{split}
d (v_t, w_t) &= \Big( \Tr(\overline{C}_\gamma) - C_{\gamma}(v_t,v_t) - C_{\gamma}(w_t,w_t) - C_{\gamma}(v_t,w_t) \Big)\big( 1- (v_t, w_t) \big)dt   \\
&\quad+ \big( 1- (v_t, w_t) \big) \Big( (v_t, dW_t) +  (w_t, dW_t)\Big),
\end{split}\end{equation*}
or equivalently, setting 
$$
N_t := \frac12\|w_t-v_t\|^2=1-( v_t,w_t),
$$ 
we get 
\begin{equation}\begin{split}   \label{EqComputation}
d N_t & = - \Big(  \Tr(\overline{C}_\gamma) - C_{\gamma}(v_t,v_t) - C_{\gamma}(w_t,w_t) - C_{\gamma}(v_t,w_t) \Big) N_t dt   \\
&\quad- N_t \big( (v_t, dW_t) +  (w_t, dW_t)\big).
\end{split}\end{equation}
Now remark that since the sequence $(\alpha_n)$ is non-increasing, we have 
$$
C_{\gamma}(v,v) = \sum_{n\geq 0} \alpha_n^2 |v_n|^2 \leq \alpha_0^2,
$$ 
for any $v \in S$. Taking the expectation under $\mathbb P$ in equation \eqref{EqComputation}, we have from Gr\"onwall inequality
$$
\mathbb E[N_t] \leq e^{-t(\Tr(C_\gamma)-3\alpha_0^2)} \mathbb E[N_0], 
$$
that is 
$$
\mathbb E[\|v_t -w_t\|^2]  \leq e^{-t(\Tr(C_\gamma)-3\alpha_0^2)}\, \|x-y\|^2.
$$
The conclusion of the statement follows.
\end{proof}

Remark that $\mathbb E_\mu[v_t] = 0$, as a consequence of the symmetry properties of the invariant measure $\mu$.

\begin{corollary}   \label{CorExpConvergence}
For any $v_0\in S$, we have 
$$
\big\|\mathbb E_{v_0}[v_t]\big\| \leq  2 e^{-t/\tau}.
$$
\end{corollary}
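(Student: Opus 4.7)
The plan is to deduce the bound directly from the Wasserstein contraction \eqref{EqExponentialConvergence}, via the Kantorovich-Rubinstein duality and the symmetry of the invariant measure $\mu$.

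First I would observe that $\mathbb E_\mu[v_t] = 0$. Indeed, since $\mu$ is invariant, $\mathbb E_\mu[v_t] = \int_S v\,d\mu(v)$. By construction, $\mu$ is the image under $u\mapsto u/\|u\|$ of the measure $\frac{1}{Z\|u\|}\gamma(du)$; the Gaussian $\gamma$ is centred hence symmetric under $u\mapsto -u$, so $\mu$ is invariant under $v\mapsto -v$, which forces its barycentre to vanish. This is the content of the remark placed right before the corollary.

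Next I would rewrite $\|\mathbb E_{v_0}[v_t]\|$ as a supremum of linear evaluations. For every $a\in H$ with $\|a\|\leq 1$ the map $v\mapsto (a,v)$ is $1$-Lipschitz on $S$, so
\begin{equation*}
\|\mathbb E_{v_0}[v_t]\| \;=\; \sup_{\|a\|\leq 1} \bigl(a,\mathbb E_{v_0}[v_t]-\mathbb E_\mu[v_t]\bigr) \;=\; \sup_{\|a\|\leq 1} \int_S (a,v)\,d\bigl(P_t^*\delta_{v_0}-\mu\bigr)(v).
\end{equation*}
The Kantorovich-Rubinstein duality recalled in Section \ref{SubsectionMixing} then bounds each such integral by $\mathcal W_1(P_t^*\delta_{v_0},\mu)$, and the elementary comparison $\mathcal W_1\leq\mathcal W_2$ together with \eqref{EqExponentialConvergence} applied to $\lambda=\delta_{v_0}$ gives
\begin{equation*}
\|\mathbb E_{v_0}[v_t]\| \;\leq\; \mathcal W_1(P_t^*\delta_{v_0},\mu) \;\leq\; \mathcal W_2(P_t^*\delta_{v_0},\mu) \;\leq\; 2e^{-t/\tau},
\end{equation*}
which is the claimed inequality.

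There is essentially no obstacle here: the only points requiring a line of justification are that the invariant measure is centred (settled by the symmetry of $\gamma$) and that linear functionals of norm at most one are $1$-Lipschitz (immediate from Cauchy-Schwarz). Everything else is a direct application of Proposition \ref{prop.exponentialmixing}.
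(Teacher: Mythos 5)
Your argument is correct and is exactly the one the paper intends: the corollary is stated without proof, immediately after the remark that $\mathbb E_\mu[v_t]=0$ by symmetry of $\mu$, and is meant to follow from Proposition \ref{prop.exponentialmixing} by testing against $1$-Lipschitz linear functionals and using $\mathcal W_1\leq\mathcal W_2$ together with \eqref{EqExponentialConvergence}. You have simply written out the intended deduction in full.
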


The process $(v_t)$ is stationary if $v_0$ has distribution $\mu$; it can then be extended into a two sided process defined for all real times. Denote by $(\mathcal{F}_t)_{t\in\mathbb{R}}$ the complete filtration generated by $(v_t)$ on the probability space where it is defined. Set $\mathcal{F}_{\leq 0} := \sigma\big(\mathcal{F}_t\,;\,t\leq 0\big)$ and $\mathcal{F}_{\geq s} := \sigma\big(\mathcal{F}_t\,;\,t\geq s\big)$, for any real time $s$. Recall that the mixing coefficient $\alpha(s)$ of the velocity process $v$ is defined, for $s>0$, by the formula
$$
\alpha(s) := \sup_{A\in\mathcal{F}_{\leq 0}, B\in\mathcal{F}_{\geq s}} \big|\mathbb{P}(A\cap B)-\mathbb{P}(A)\mathbb{P}(B)\big|.
$$
The following fact will be useful to get for free the independence of the increments of the limit processes obtained after proper rescalings of functionals of $(v_t)$.

\begin{corollary}   \label{CorMixingCoefficient}
The mixing coefficient $\alpha(s)$ tends to $0$ as $s$ increases to $\infty$.
\end{corollary}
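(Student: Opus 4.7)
My plan is to use the Markov property of $(v_t)$ to reduce $\alpha(s)$ to a convergence statement for the semigroup $(P_t)$ in $L^1(\mu)$, and then invoke the exponential Wasserstein contraction of Proposition \ref{prop.exponentialmixing}. For $A \in \mathcal F_{\leq 0}$ and $B \in \mathcal F_{\geq s}$, the Markov property at time $0$ gives $\mathbb E[\Ind_B \mid \mathcal F_0] = (P_s h_B)(v_0)$, where the bounded measurable function $h_B : S \to [0,1]$ is defined by $h_B(v) := \mathbb P_v(\widetilde B)$ for the appropriate time-shifted event $\widetilde B$; writing $\bar h := \int h\, d\mu$, stationarity yields $\mathbb P(B) = \bar h_B$, so that
\begin{equation*}
\bigl|\mathbb P(A\cap B) - \mathbb P(A)\mathbb P(B)\bigr| = \bigl|\mathbb E\bigl[\Ind_A\bigl(P_sh_B(v_0) - \bar h_B\bigr)\bigr]\bigr| \leq \bigl\|P_sh_B - \bar h_B\bigr\|_{L^1(\mu)}.
\end{equation*}
It follows that $\alpha(s) \leq \sup_{0\leq h \leq 1} \|P_s h - \bar h\|_{L^1(\mu)}$, where the supremum ranges over bounded measurable $h:S\to [0,1]$.

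For $h$ Lipschitz with Lipschitz constant $L_h$, Kantorovich-Rubinstein duality and Proposition \ref{prop.exponentialmixing} immediately give
\begin{equation*}
\|P_s h - \bar h\|_\infty \leq L_h \sup_{v\in S}\mathcal W_1(P_s^*\delta_v, \mu) \leq 2L_h e^{-s/\tau},
\end{equation*}
so the desired $L^1(\mu)$-decay is exponential on the dense subclass of Lipschitz functions.

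The main difficulty is to upgrade this to a bound that is uniform over all bounded measurable $h:S \to [0,1]$. I would address it by exhibiting an $L^2(\mu)$ spectral gap for $(P_t)$ implicit in the Wasserstein contraction: by Kuwada's duality, the $\mathcal W_2$-contraction of Proposition \ref{prop.exponentialmixing} is equivalent to the pointwise gradient estimate $|\nabla P_t f|^2 \leq e^{-2t/\tau}P_t(|\nabla f|^2)$, which is the Bakry-Émery curvature condition $CD(1/\tau,\infty)$; this implies a Poincaré inequality for $\mu$ with constant $\tau$, hence the exponential $L^2$ decay $\|P_s h - \bar h\|_{L^2(\mu)} \leq e^{-s/\tau}\|h - \bar h\|_{L^2(\mu)}$. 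For $\|h\|_\infty \leq 1$, a Cauchy-Schwarz estimate then yields $\|P_s h - \bar h\|_{L^1(\mu)} \leq 2 e^{-s/\tau}$ uniformly in $h$, whence $\alpha(s) \leq 2 e^{-s/\tau} \to 0$. The main technical hurdle is to run the Kuwada-Bakry-Émery chain rigorously in the infinite-dimensional setting of the Hilbert sphere $S \subset H$; a simpler alternative route would be to establish the Poincaré inequality for $\mu$ directly, using the Gaussian representation of $\mu$ revealed in the proof of Theorem \ref{PropInvariantMeasure}.
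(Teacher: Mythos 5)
Your reduction of $\alpha(s)$ to $\sup_{0\leq h\leq 1}\|P_sh-\bar h\|_{L^1(\mu)}$ via the Markov property and stationarity is correct, and so is the treatment of Lipschitz test functions via Kantorovich--Rubinstein duality and Proposition \ref{prop.exponentialmixing}; this part matches the paper's strategy. But the step you yourself flag as the main difficulty --- passing from Lipschitz $h$ to arbitrary bounded measurable $h$, \emph{uniformly} in $h$ --- is not actually closed, and the route you sketch for it has real obstructions, not just routine technicalities. First, the direction of Kuwada's duality you need (Wasserstein contraction $\Rightarrow$ pointwise gradient estimate) is proved for length metric spaces, whereas the $\mathcal W_2$-contraction of Proposition \ref{prop.exponentialmixing} is stated for the ambient chordal distance $\|X-Y\|$ on the Hilbert sphere, which is not a length metric; converting to the geodesic distance costs a multiplicative constant $>1$, and a bound $|\nabla P_tf|^2\leq Ce^{-2t/\tau}P_t(|\nabla f|^2)$ with $C>1$ no longer yields $CD(1/\tau,\infty)$ by differentiation at $t=0$. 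Second, even granting the gradient estimate, deducing a Poincar\'e inequality requires setting up the Dirichlet form, the carr\'e du champ and the identity $\frac{d}{dt}\mathrm{Var}_\mu(P_tf)=-2\int|\nabla P_tf|^2d\mu$ for this degenerate infinite-dimensional diffusion on $S$, which is itself a substantial piece of work that you defer rather than carry out. A naive density argument (approximate $h\in L^2(\mu)$ by Lipschitz functions and use the $L^2(\mu)$-contractivity of $P_s$) gives $\|P_sh-\bar h\|_{L^1(\mu)}\to 0$ for each fixed $h$ but not uniformly over $\{0\leq h\leq 1\}$, so it does not suffice either.

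The paper closes exactly this gap by a regularization argument instead of a functional inequality: the $H$-valued lift $(u^\sigma_t)$ of the velocity process satisfies a Bismut--Elworthy--Li integration by parts formula (citing Peszat--Zabczyk and Wang--Zhang), hence is strong Feller, and consequently $P_1$ maps any measurable $g$ with $\|g\|_\infty\leq 1$ to a Lipschitz function with a \emph{common} Lipschitz constant $L$. Writing $\mathbb E[f(v_0)g(v_s)]=\mathbb E[f(v_0)(P_1g)(v_{s-1})]$ then reduces the general case to the Lipschitz case with a uniform constant, and the Wasserstein contraction finishes the proof with an explicit exponential rate. To make your argument complete you would either need to import this strong Feller input, or genuinely establish the Poincar\'e inequality for $\mu$ (your suggested direct route via the Gaussian representation $\mu=(u\mapsto u/\|u\|)_*\big(Z^{-1}\|u\|^{-1}\gamma(du)\big)$ from Theorem \ref{PropInvariantMeasure} is plausible, but as written it remains a proposal, not a proof).
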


\begin{proof}
As a preliminary remark, recall the definition of the lift $(u^\sigma_t)$ to $H$ of $(v^\sigma_t)$, introduced in the proof of Theorem \ref{PropInvariantMeasure}. This process is strong Feller, as it can be seen to satisfy a Bismut-Li integration by parts formula. See e.g. Peszat and Zabczyk' seminal paper \cite{PeszatZabczyk}, and Wang and Zhang's extension \cite{WangZhang} to unbounded drift and diffusivity. The velocity process $(v^\sigma_t)$ is thus itself a strong Feller diffusion, and if one denotes by $(P_t)$ its transition semigroup, the functions $P_1g$, for $g$ measurable, bounded by $1$, are all Lipschitz continuous, with a finite common upper bound $L$ for their Lipschitz constants.

Now, it follows from the Markovian character of the dynamics of $(v_t)$, and the Feller property of its semigroup, that it suffices to see that 
\begin{equation}   \label{EqMixingCoefficient}
\mathbb{E}\big[f(v_0)g(v_s)\big]
\end{equation}
tends to $0$ as $s$ goes to $\infty$, for any real-valued continuous functions $f,g$ on the unit sphere $S$, with null mean with respect to the invariant measure $\mu$. Writing further 
$$
\mathbb{E}\Big[f(v_0)\mathbb{E}\big[g(v_s)|v_{s_1}\big]\Big] = \mathbb{E}\Big[f(v_0)\,(P_1g)(v_{s-1})\Big],
$$
for $s>1$, and using the strong Feller property of the semigroup of the diffusion process $(v_t)$, we can further assume that the function $g$ in \eqref{EqMixingCoefficient} is $L\|g\|_\infty$-Lipschitz continuous. Let $w_g$ stand for its uniform modulus of continuity. For each $s$, denote by $(v_s,\overline{v}_s)$ a $\mathcal{W}_1$-optimal coupling of the measures $P_s^*\delta_{v_0}$ and $\mu$, for a deterministic $v_0$, so we have
$$
\mathbb{E}\big[|v_s-\overline{v}_s|\big] = \mathcal{W}_1\big(P_s^*\delta_{v_0},\mu\big).
$$
Using the fact that $\int gd\mu=0$, one then has
\begin{equation*}\begin{split}
\big|\mathbb{E}\big[f(v_0)g(v_s)\big]\big| &= \Big|\mathbb{E}\Big[f(v_0)\,\mathbb{E}\big[g(v_s)|v_0\big]\Big]\Big|   \\
&\leq \|f\|_\infty\, \mathbb{E}\Big[w_g\big(|v_s-\overline{v}_s|\big)\Big]   \\
&\leq L\|f\|_\infty\|g\|_\infty\, \mathbb{E}\big[|v_s-\overline{v}_s|\big],
\end{split}\end{equation*}
so the statement follows from Proposition \ref{prop.exponentialmixing}.
\end{proof}

\subsection{Invariance principle for the position process}
\label{SubsectionCvgceX}

\textit{We assume in all of this section that the initial condition $v_0$ of the velocity process of kinetic Brownian motion is distribued according to its invariant probability measure $\mu$, from Theorem \ref{PropInvariantMeasure}.}

\medskip

Pick $1/3<\alpha\leq 1/2$. We prove in this section that the distribution in $C^\alpha([0,1],H)$ of the time-rescaled position process $(x_{\sigma^2t}^{\sigma})$ converges to the distribution of a Brownian motion in $H$ with an explicit covariance, given in identity \eqref{EqCovarianceLimitBM} of Proposition \ref{prop.convergenceHolder} below. The usual invariance principles in Hilbert spaces consider weak convergence in $C^0([0,1],H)$, so we need an extra tightness estimate provided in Section \ref{sssec.tighnessHolder} to complete the program. To make the most out of the convergence results from Section \ref{SubsectionMixing}, set 
$$
X^\sigma_t := x^\sigma_{\sigma^2t};
$$ 
we have
$$
X^\sigma_t - X^\sigma_s = \int_{\sigma^2s}^{\sigma^2t}v_{\sigma^2u} d u= \frac1{\sigma^2}\int_{\sigma^4s}^{\sigma^4t}v_u d u, 
$$
with $(v_t) = (v^1_t)$, the spherical Brownian motion run at speed $\sigma^2=1$.

\begin{proposition}
\label{prop.convergenceHolder}
For every $0<\alpha<1/2$, the distribution in $\mathcal C^\alpha([0,1], H)$ of the process $(X^\sigma_t)$ converges as $\sigma$ goes to $\infty$ to the Brownian motion on $H$ with covariance operator
\begin{equation}   \label{EqCovarianceLimitBM}
C(\ell,\ell') := \int_0^\infty\mathbb{E}\Big[\ell(v_0)\ell'(v_t)+\ell'(v_0)\ell(v_t)\Big]\,dt,
\end{equation}
for $\ell,\ell'\in H^*$.
\end{proposition}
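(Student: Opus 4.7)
My plan is to split the statement into three independent tasks: identify the covariance of the candidate Gaussian limit, prove convergence of finite-dimensional distributions, and upgrade $C^0$-tightness to Hölder tightness.

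First I would identify the limit covariance. Since $X^\sigma_t-X^\sigma_s=\sigma^{-2}\int_{\sigma^4 s}^{\sigma^4 t}v_u\,du$, Fubini gives
\[
\mathbb{E}\big[\ell(X^\sigma_t-X^\sigma_s)\,\ell'(X^\sigma_t-X^\sigma_s)\big]=\sigma^{-4}\int_{\sigma^4 s}^{\sigma^4 t}\!\!\int_{\sigma^4 s}^{\sigma^4 t}\mathbb{E}\big[\ell(v_u)\ell'(v_w)\big]\,du\,dw.
\]
The stationarity of $(v_u)$ together with the Markov property and Corollary \ref{CorExpConvergence} gives $\big|\mathbb{E}[\ell(v_u)\ell'(v_w)]\big|\lesssim \|\ell\|\|\ell'\|\,e^{-|u-w|/\tau}$, so the double integral is absolutely convergent and, after the change of variables $r=\sigma^4 u$, converges to $(t-s)\,C(\ell,\ell')$ with $C$ the kernel in \eqref{EqCovarianceLimitBM}. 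This also shows that $C$ is a well-defined bounded, symmetric bilinear form on $H^*\times H^*$, and a computation along the same lines shows $C$ is trace-class, so that it is the covariance of a genuine $H$-valued Brownian motion.

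Next I would prove convergence of finite-dimensional distributions. For each $\ell\in H^*$, the scalar process $(\ell(X^\sigma_t))_{t\in[0,1]}$ is a time average of the stationary process $(\ell(v_u))$, which, by Corollary \ref{CorMixingCoefficient}, is strongly mixing with $\alpha(s)\to 0$ at exponential rate. The functional central limit theorem for stationary strongly mixing sequences (Ibragimov–Rozanov, Rio) therefore gives convergence of $(\ell(X^\sigma_\cdot))$ in $C^0([0,1],\mathbb{R})$ to a Brownian motion of variance $C(\ell,\ell)$. Applying this to arbitrary linear combinations $\sum_i\lambda_i\ell_i$ and invoking Cramér–Wold pins down the finite-dimensional distributions of the limit as those of a Brownian motion on $H$ with covariance $C$.

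The final and hardest step is tightness in the Hölder space $C^\alpha([0,1],H)$. By Kolmogorov's criterion in Banach space form, it suffices to show a uniform bound
\[
\sup_{\sigma\ge 1}\mathbb{E}\big[\|X^\sigma_t-X^\sigma_s\|^{2p}\big]\;\lesssim_p\;|t-s|^{p}
\]
for every integer $p\ge 1$; this then yields tightness in $C^\alpha$ for every $\alpha<1/2-1/(2p)$, hence for every $\alpha<1/2$ upon letting $p\to\infty$. The case $p=1$ is immediate from the kernel bound above. For higher $p$, I would expand
\[
\mathbb{E}\big[\|X^\sigma_t-X^\sigma_s\|^{2p}\big]=\sigma^{-4p}\!\!\int_{[\sigma^4 s,\sigma^4 t]^{2p}}\!\!\mathbb{E}\Big[\prod_{i=1}^{p}(v_{u_{2i-1}},v_{u_{2i}})\Big]\,du_1\cdots du_{2p},
\]
order the times $u_{\pi(1)}\le\cdots\le u_{\pi(2p)}$, and dominate the integrand by products of exponential gap factors via iterated applications of the Markov property and Corollary \ref{CorExpConvergence}; the combinatorial bookkeeping is the classical one behind Rosenthal/Rio inequalities for mixing sequences, and summing the resulting bound over orderings recovers the expected $|t-s|^{p}$ scaling. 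The main obstacle is precisely this multi-point mixing bound in the Hilbert-valued setting: one needs that the correlations contract not only between the pair $(v_u,v_w)$ but through products of many such pairings, uniformly in the direction of evaluation. Once this is in hand, combining the moment estimate with the convergence of finite-dimensional distributions yields the claimed convergence in $C^\alpha([0,1],H)$.
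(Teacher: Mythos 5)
Your overall architecture (identify the covariance, prove finite-dimensional convergence, upgrade to H\"older tightness via uniform moment bounds) matches the paper's, and your first two steps are essentially sound: the two-point bound $|\mathbb E[\ell(v_u)\ell'(v_w)]|\lesssim e^{-|u-w|/\tau}$ does follow from Corollary \ref{CorExpConvergence} by conditioning, and the paper likewise reduces the $C^0$-convergence to a CLT for the stationary mixing process $(v_t)$ (it uses the Dedecker--Merlev\`ede conditional CLT rather than a scalar FCLT plus Cram\'er--Wold; note that in infinite dimensions Cram\'er--Wold alone does not identify an $H$-valued limit, so you must explicitly invoke your tightness step to close that loop).

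The genuine gap is in your third step, which is the technical heart of the proposition. You reduce the uniform bound $\mathbb E[\|\int_0^T v_t\,dt\|^{2p}]\lesssim_p T^p$ to a ``multi-point mixing bound'' for $\mathbb E\big[\prod_{i=1}^p(v_{u_{2i-1}},v_{u_{2i}})\big]$, but you do not prove it, and it does not follow from Corollary \ref{CorExpConvergence} by ``iterated conditioning.'' Conditioning on the past of the largest time extracts decay in the \emph{last} gap only, which yields $T^{2p-1}$, not $T^p$; to win a factor $e^{-g/\tau}$ in roughly $p$ of the $2p-1$ gaps one needs covariance inequalities between blocks of variables. In the Hilbert-valued setting these are delicate: a pairing such as $(v_{r_1},v_{r_4})(v_{r_2},v_{r_3})$ is not a product of a bounded $\mathcal F_{\leq r_1}$-measurable scalar with a bounded $\mathcal F_{\geq r_2}$-measurable scalar, but a sum $\sum_n v_{r_1}^n\cdot\big[v_{r_4}^n(v_{r_2},v_{r_3})\big]$ over an orthonormal basis, and applying the scalar mixing covariance inequality coordinate-wise and summing over $n$ destroys the bound. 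One would need a quantitative Hilbert-valued covariance inequality (\`a la Dehling) together with an explicit rate for the mixing coefficient, whereas Corollary \ref{CorMixingCoefficient} only gives $\alpha(s)\to0$. The paper sidesteps all of this: Proposition \ref{lem.kolmogorovwk} is proved by a martingale/coboundary multiscale decomposition (Lemmas \ref{LemmaBDG} and \ref{lem.martingaledecomposition}, following Cuny), where the Burkholder--Davis--Gundy inequality for $H$-valued martingales supplies all moments at once and the only decorrelation input needed is the one-point decay $\|\mathbb E_{v_0}[v_t]\|\leq 2e^{-t/\tau}$ of Corollary \ref{CorExpConvergence}. If you want to complete your route, you should either import and verify a Hilbert-valued Rosenthal-type inequality for exponentially mixing bounded sequences, or switch to the martingale approximation.
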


\subsubsection{Tightness in H\"older spaces}
\label{sssec.tighnessHolder}

We dedicate this section to proving the following uniform estimate.

\begin{proposition}
\label{lem.kolmogorovwk}
For any $p\geq2$, we have
$$
\sup_{\sigma>0}\,\mathbb E\big[\|X^\sigma_t-X^\sigma_s\|^p\big] \lesssim_p |t-s|^{p/2}. 
$$
\end{proposition}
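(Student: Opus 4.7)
The plan is to exploit the stationarity of $(v_u)$ under $\mu$ to reduce to a scale-free moment bound, then replace $\int_0^T v_u\,du$ by a Hilbert-valued martingale via a Dynkin-type decomposition controlled by the exponential mixing of Proposition \ref{prop.exponentialmixing}. By the identity $X^\sigma_t - X^\sigma_s = \sigma^{-2}\int_{\sigma^4 s}^{\sigma^4 t} v_u\,du$ and stationarity, a time shift reduces the claim to showing
\begin{equation*}
\mathbb{E}\Big\|\int_0^T v_u\,du\Big\|^p \lesssim_p T^{p/2}
\end{equation*}
for all $T \geq 0$; the regime $T \leq 1$ is trivial since $\|v_u\| = 1$, so I focus on $T \geq 1$.

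The central object is the Poisson-type potential
\begin{equation*}
F(v) := \int_0^\infty \mathbb{E}_v[v_s]\,ds, \qquad v \in S,
\end{equation*}
which is well-defined as a Bochner integral in $H$ and satisfies $\|F\|_\infty \leq 2\tau$, thanks to the uniform exponential estimate $\|\mathbb{E}_v[v_s]\| \leq 2 e^{-s/\tau}$ of Corollary \ref{CorExpConvergence}. A direct application of the Markov property and Fubini yields the semigroup identity
\begin{equation*}
\mathbb{E}\big[F(v_t)\,|\,\mathcal{F}_s\big] + \mathbb{E}\Big[\int_s^t v_u\,du\,\Big|\,\mathcal{F}_s\Big] = F(v_s),
\end{equation*}
so $\tilde N_t := F(v_t) + \int_0^t v_u\,du$ is an $H$-valued $(\mathcal{F}_t)$-martingale. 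Writing
\begin{equation*}
M_T := \int_0^T v_u\,du = (\tilde N_T - \tilde N_0) + F(v_0) - F(v_T),
\end{equation*}
the boundary term has $L^\infty$-norm at most $4\tau$, so the question reduces to bounding $\mathbb{E}\|\tilde N_T - \tilde N_0\|^p$.

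For this I sample $\tilde N$ at the integer times $t_k = k$, $k = 0, 1, \ldots, \lfloor T \rfloor$, together with the terminal time $T$. Each martingale increment $\Delta_k := \tilde N_{t_{k+1}} - \tilde N_{t_k}$ satisfies $\|\Delta_k\| \leq 2\|F\|_\infty + (t_{k+1} - t_k) \leq 4\tau + 1$ almost surely, and there are $\lesssim T$ of them. The Burkholder--Davis--Gundy inequality for discrete Hilbert-valued martingales then gives
\begin{equation*}
\mathbb{E}\|\tilde N_T - \tilde N_0\|^p \lesssim_p \mathbb{E}\Big(\sum_k \|\Delta_k\|^2\Big)^{p/2} \lesssim_p T^{p/2},
\end{equation*}
which, combined with the boundary estimate, yields the claimed moment bound.

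The only delicate point is verifying that $\tilde N$ is genuinely a martingale, since in infinite dimensions $F$ has no obvious regularity on $S$; but this is a consequence purely of the Markov property together with the Bochner measurability of $F$, and requires no smoothness input. The exponential decay supplied by Corollary \ref{CorExpConvergence} is precisely what produces a uniformly bounded $F$ and hence a constant independent of $\sigma$.
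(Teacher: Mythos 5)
Your argument is correct, and it takes a genuinely different route from the paper. The paper follows Cuny's discrete-time multiscale martingale/coboundary decomposition: it cuts $\int_0^T v_t\,dt$ into unit-length slices, splits each slice into a martingale difference plus a conditional expectation, and then controls the conditional expectations recursively across dyadic scales (Lemma \ref{lem.martingaledecomposition}), feeding in the decay of Corollary \ref{CorExpConvergence} at each scale. You instead solve the Poisson equation directly: the uniform bound $\|\mathbb{E}_v[v_s]\|\leq 2e^{-s/\tau}$ makes the potential $F(v)=\int_0^\infty \mathbb{E}_v[v_s]\,ds$ a bounded measurable $H$-valued corrector, the Markov property (plus Fubini, justified by absolute integrability) shows $F(v_t)+\int_0^t v_u\,du$ is a martingale, and since its increments over unit time steps are bounded by $4\tau+1$ almost surely, the Hilbert-valued Burkholder--Davis--Gundy inequality (essentially the paper's own Lemma \ref{LemmaBDG}) gives the $T^{p/2}$ bound, with the boundary terms $F(v_0)-F(v_T)$ absorbed in $L^\infty$. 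Your version is shorter and more transparent, and it exploits the full strength of the uniform exponential ergodicity; note only that the constant depends on $\tau$, as it does in the paper's bound \eqref{EqTightnessCAlpha}. What the paper's heavier machinery buys is robustness and reusability: the same multiscale scheme works under much weaker decay of $\|\mathbb{E}_v[v_t]\|$ (where the Poisson potential need not be bounded or even exist), and it is recycled almost verbatim in the proof of Proposition \ref{lem.kolmogorovRP} for the iterated integral $\int_0^T\!\int_0^t v_s\otimes v_t\,ds\,dt$, where a bounded corrector is no longer available because the relevant observable is unbounded. Your approach would need additional work to cover that second-level estimate.
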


It follows from Kolmogorov-Lamperti tightness criterion that the laws of $X^\sigma$ form a tight family in $\mathcal C^\alpha([0,1],H)$, for any $0<\alpha<1/2$. Note that for $T=\sigma^4(t-s)>0$, we have
$$
\big\|X^\sigma_t-X^\sigma_s\big\| \overset{\mathcal{L}}{=} \frac1{\sigma^2}\left\|\int_0^{\sigma^4(t-s)}v_u\,du\right\| = |t-s|\cdot\frac1{\sqrt T}\left\|\int_0^Tv_u\,du\right\|, 
$$
so Proposition \ref{lem.kolmogorovwk} is a consequence of the estimate
$$
\mathbb E\left[\left|\int_0^Tv_t\,dt\right|^p\right]\lesssim_p T^{p/2}. 
$$
We translate our problem in discrete time, writing
$$
\int_0^T = \sum_{k<T}\int_k^{k+1}
$$
to work with the correlations between different integral slices, and compare this sequence to martingale differences. There is an abundant literature on the subject; we follow here the approach of C. Cuny \cite{Cuny}.

\medskip

Let $(\Omega,\mathcal F,\mathbb P)$ be a probability space with a filtration $(\mathcal F_n)_{n\geq n_0}$, where $-\infty\leq n_0\leq0$, and let $(X_n)_{n\geq n_0}$ be $H$-valued random variables such that each $X_n$ is measurable with respect to $\mathcal F_n$. Recall that $(X_n)_{n\geq0}$ is said to be a \textbf{martingale difference} with respect to $(\mathcal F_n)$ if each $X_n$ is integrable and $\mathbb E\big[X_{n+1}|\mathcal F_n\big]=0$, for all $n\geq n_0$. The following result is an elementary consequence of the Burkholder-Davis-Gundy and Jensen inequalities.

\begin{lemma}   \label{LemmaBDG}
Let $X$ be an $H$-valued martingale difference with moments of order $p\geq2$. Then
$$
\mathbb E\big[|X_0+\cdots+X_{n-1}|^p\big]^\frac1p \lesssim_p \sqrt n\,\left(\frac1n\Big(\mathbb E\big[\big|X_0\big|^p\big] + \cdots + \mathbb E\big[\big|X_{n-1}\big|^p\big]\Big)\right)^\frac1p. 
$$
In particular, if $X$ is stationary, then
$$
\mathbb E\Big[|X_0+\cdots+X_{n-1}|^p\Big]^\frac1p \lesssim_p \sqrt n\,\|X_0\|_{L^p}. 
$$
\end{lemma}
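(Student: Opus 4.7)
The plan is to reduce the first inequality to the Burkholder--Davis--Gundy inequality for Hilbert-valued martingales applied to $S_n := X_0 + \cdots + X_{n-1}$, then use convexity of $x\mapsto x^{p/2}$ (valid since $p\geq 2$) to pass from the quadratic-variation control to the claimed bound in terms of the individual $L^p$ norms. The stationary case then follows immediately.

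More precisely, since each $X_k$ is a martingale difference with respect to $(\mathcal F_k)$, the partial sums $S_n$ form an $H$-valued $(\mathcal F_n)$-martingale. The Hilbert-space version of the BDG inequality (valid for any separable Hilbert space, as it reduces to the scalar case applied to $\|S_n\|$ via the It\^o formula or by direct reasoning on discrete-time martingales) gives
$$
\mathbb E\big[\|S_n\|^p\big]^{1/p} \lesssim_p \mathbb E\Bigg[\bigg(\sum_{k=0}^{n-1}\|X_k\|^2\bigg)^{p/2}\Bigg]^{1/p}.
$$
For $p\geq 2$, the function $x\mapsto x^{p/2}$ is convex on $[0,\infty)$, and Jensen's inequality applied to the uniform probability measure on $\{0,\ldots,n-1\}$ yields
$$
\bigg(\frac{1}{n}\sum_{k=0}^{n-1}\|X_k\|^2\bigg)^{p/2} \leq \frac{1}{n}\sum_{k=0}^{n-1}\|X_k\|^p,
$$
so that
$$
\bigg(\sum_{k=0}^{n-1}\|X_k\|^2\bigg)^{p/2} \leq n^{p/2-1}\sum_{k=0}^{n-1}\|X_k\|^p.
$$
Taking expectations and extracting the $1/p$-th root gives
$$
\mathbb E\big[\|S_n\|^p\big]^{1/p} \lesssim_p n^{1/2-1/p}\bigg(\sum_{k=0}^{n-1}\mathbb E\big[\|X_k\|^p\big]\bigg)^{1/p} = \sqrt n\,\bigg(\frac{1}{n}\sum_{k=0}^{n-1}\mathbb E\big[\|X_k\|^p\big]\bigg)^{1/p},
$$
which is the first claimed estimate. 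In the stationary case, each term $\mathbb E[\|X_k\|^p]$ equals $\|X_0\|_{L^p}^p$, and the averaged sum collapses to $\|X_0\|_{L^p}^p$, yielding the second inequality.

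The only step requiring any care is the invocation of the BDG inequality in the Hilbert-valued setting; this is standard (see e.g.\ the discrete-time version of Pinelis' inequality, or a direct reduction to the scalar Burkholder inequality applied to $\|S_n\|^2$), and the constant depends only on $p$, which is consistent with the notation $\lesssim_p$. Everything else is pure bookkeeping with Jensen and constants.
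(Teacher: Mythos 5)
Your argument is correct and is exactly the route the paper intends: the lemma is stated there as "an elementary consequence of the Burkholder--Davis--Gundy and Jensen inequalities" with no further detail, and your proof supplies precisely those two steps (Hilbert-valued BDG for the martingale $S_n$, then Jensen on the quadratic variation). Nothing to add.
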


Assume from now on that we are given a sequence $(X_n)_{\geq n_0}$ of integrable $H$-valued random variables on $(\Omega,\mathcal{F},\mathbb{P})$. For $j\in\mathbb{Z}$, and $k\geq 0$, define the $\sigma$-algebra
$$
\mathcal F^{(k)}_j := \mathcal F_{j2^k},
$$
and set
$$
Y^{(k)}_j := \mathbb E\Big[X_{j2^k}+\cdots+X_{j2^k+(2^k-1)} \big| \mathcal F^{(k)}_{j-1}\Big].
$$
(It may not make sense for all $j, k$, depending on how far in the past the $\sigma$-algebras $(\mathcal F_n)$ are defined.) Note that
$$ 
Y^{(\ell+1)}_j = \mathbb E\Big[Y^{(\ell)}_{2j}+Y^{(\ell)}_{2j+1}\Big|\mathcal F^{(\ell+1)}_{j-1}\Big], 
$$
so 
$$
M^{(\ell)}_j := Y^{(\ell)}_{2j} + Y^{(\ell)}_{2j+1} - Y^{(\ell+1)}_j
$$
is a stationary martingale difference with respect to the filtration $\big(\mathcal F^{(\ell+1)}_j\big)_{j\geq0}$. We use the classical martingale/co-boundary decomposition to prove the next result.

\begin{lemma}   \label{lem.martingaledecomposition}
Fix $p\geq2$, and assume that $\mathcal F_n$ is defined for $n\geq-2^{k+1}$, then
\begin{equation*}\begin{split}
\mathbb E\Big[\big|Y^{(0)}_0 &+ \cdots + Y^{(0)}_{2^k-1}\big|^p\Big]^\frac1p   \\
&\lesssim_p \sum_{0\leq j\leq k} 2^{(k-j)/2}\,\left(\frac1{2^{k-j}}\Big(\mathbb E\big[\big|Y^{(j)}_0\big|^p\big] + \cdots + \mathbb E\big[\big|Y^{(j)}_{2^{k-j}-1}\big|^p\big]\Big)\right)^\frac1p.
\end{split}\end{equation*}
In particular, if the sequence $(X_n)$ is stationary, then
$$
\mathbb E\big[\big|Y^{(0)}_0 + \cdots + Y^{(0)}_{2^k-1}\big|^p\big]^\frac1p \lesssim_p 2^{k/2}\left( \mathbb E\big[\big|Y^{(0)}_0\big|^p\big]^\frac1p  + \cdots + 2^{-k/2}\mathbb E\big[\big|Y^{(k)}_0\big|^p\big]^\frac1p\right).
$$
\end{lemma}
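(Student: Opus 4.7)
I would argue by induction on $k$, using the martingale/co-boundary identity $Y^{(\ell)}_{2j}+Y^{(\ell)}_{2j+1}=M^{(\ell)}_j+Y^{(\ell+1)}_j$ recalled just before the statement. The base case $k=0$ is a tautology. For the inductive step, grouping consecutive pairs yields
$$
\sum_{i=0}^{2^k-1} Y^{(0)}_i \;=\; \sum_{j=0}^{2^{k-1}-1} M^{(0)}_j \;+\; \sum_{j=0}^{2^{k-1}-1} Y^{(1)}_j,
$$
and the triangle inequality in $L^p$ reduces the problem to bounding the two pieces separately.

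For the first piece, $(M^{(0)}_j)_j$ is a martingale difference with respect to $(\mathcal F^{(1)}_j)$ of length $2^{k-1}$, so the preceding BDG-type lemma controls its $L^p$-norm by $2^{(k-1)/2}$ times the $L^p$-average of $\|M^{(0)}_j\|$. The key substep is to replace the $M$'s by the original $Y^{(0)}$'s. Since conditional expectation is an $L^p$-contraction, the defining identity $Y^{(1)}_j = \mathbb E\bigl[Y^{(0)}_{2j}+Y^{(0)}_{2j+1}\,\big|\,\mathcal F^{(1)}_{j-1}\bigr]$ yields $\|Y^{(1)}_j\|_{L^p}\le\|Y^{(0)}_{2j}+Y^{(0)}_{2j+1}\|_{L^p}$, whence $\|M^{(0)}_j\|_{L^p}\lesssim\|Y^{(0)}_{2j}\|_{L^p}+\|Y^{(0)}_{2j+1}\|_{L^p}$. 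Raising to the $p$-th power and summing over $j$ turns the $L^p$-average over the $2^{k-1}$ differences $M^{(0)}_j$ into a constant times the $L^p$-average over the $2^k$ terms $Y^{(0)}_i$, which produces exactly the $j=0$ contribution on the right-hand side.

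For the second piece, $\sum_{j=0}^{2^{k-1}-1} Y^{(1)}_j$ has exactly the same form as the original sum, now for the coarsened process $\widetilde X_n := X_{2n}+X_{2n+1}$ with filtration $\widetilde{\mathcal F}_n := \mathcal F_{2n}$; the filtration-depth hypothesis needed at level $k-1$ reads $\widetilde{\mathcal F}_n$ defined for $n\ge -2^k$, i.e. $\mathcal F_m$ defined for $m\ge -2^{k+1}$, which is our standing assumption. The inductive hypothesis then bounds this sum by $\sum_{j=0}^{k-1} 2^{(k-1-j)/2}(\cdots)^{1/p}$, and the re-indexing $j\mapsto j+1$ recovers precisely the terms $j=1,\ldots,k$ on the right-hand side of the statement, completing the induction.

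In the stationary setting, stationarity of $(X_n)$ propagates to stationarity in $i$ of each sequence $(Y^{(j)}_i)_{i\ge 0}$, so $\mathbb E[|Y^{(j)}_i|^p]=\mathbb E[|Y^{(j)}_0|^p]$ uniformly in $i$ and the inner averages collapse to the announced formula. I do not expect any genuine obstacle here: the only non-bookkeeping input is the $L^p$-contractivity of conditional expectation used to pass from the martingale differences $M^{(0)}_j$ back to the original $Y^{(0)}$'s, a one-line fact, and the rest is careful accounting across dyadic scales.
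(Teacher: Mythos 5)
Your proof is correct and follows essentially the same route as the paper: the one-level identity $Y^{(\ell)}_{2j}+Y^{(\ell)}_{2j+1}=M^{(\ell)}_j+Y^{(\ell+1)}_j$, the BDG-type lemma for the martingale-difference piece, and $L^p$-contractivity of conditional expectation to pass from $M^{(j)}$ back to $Y^{(j)}$ are exactly the ingredients of the paper's argument, which simply unrolls the telescoping across all scales at once rather than phrasing it as an induction on $k$ via the coarsened process. Your packaging is fine (and the implicit constant stays uniform in $k$, since each inductive step adds the new scale with a $k$-independent constant and takes a maximum with the previous one).
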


\begin{proof}
For any $0\leq j\leq k$, set $n_j := 2^{k-j}$; note that $n_k=1$. We have for $j<k$ the identity
\begin{align*}
Y^{(j)}_0 + \cdots + Y^{(j)}_{n_j-1}
&= \big(Y^{(j)}_0 + Y^{(j)}_1\big) + \cdots + \big(Y^{(j)}_{2n_{j+1}-2} + Y^{(0)}_{2n_{j+1}-1}\big)   \\
&= M^{(j)}_0+\cdots+M^{(j)}_{n_{j+1}-1} + Y^{(j+1)}_0+\cdots+Y^{(j+1)}_{n_{j+1}-1}. 
\end{align*}
By induction we get
$$
Y^{(0)}_0 + \cdots + Y^{(0)}_{n-1} = \big(M^{(0)}_0+\cdots+M^{(0)}_{n_1-1}\big) + \cdots + \big(M^{(k-1)}_0\big) + Y^{(k)}_0.
$$
Because $M^{(j)}$ is a martingale difference, we know from Lemma \ref{LemmaBDG} that
\begin{equation*}\begin{split}
\mathbb E\big[\big|M^{(j)}_0&+\cdots+M^{(j)}_{n_{j+1}-1}\big|^p\big]^\frac1p   \\
&\lesssim_p \sqrt{n_{j+1}}\cdot\left(\frac1{n_{j+1}}\Big(\mathbb E\big[\big|M^{(j)}_0\big|^p\big] + \cdots + \mathbb E\big[\big|M^{(j)}_{n_{j+1}}\big|^p\big]\Big)\right)^\frac1p.
\end{split}\end{equation*}
We also know that 
$$
M^{(j)}_{2^k} = Y^{(j)}_{2^{k+1}}+Y^{(j)}_{2^{k+1}+1} - \mathbb E\big[Y^{(j)}_{2^{k+1}}+Y^{(j)}_{2^{k+1}+1}\big|\mathcal F^{(j+1)}_{2^k-1}\big],
$$ 
so we have
\begin{align*}
\mathbb E\big[\big|M^{(j)}_{2^k}\big|^p\big]^\frac1p
  & \leq \mathbb E\big[\big|Y^{(j)}_{2^{k+1}}\big|^p\big]^\frac1p
     + \mathbb E\big[\big|Y^{(j)}_{2^{k+1}+1}\big|^p\big]^\frac1p
     + \mathbb E\Big[\mathbb E\big[|Y^{(j)}_{2^{k+1}}|^p\big|\mathcal F^{(j+1)}_{-1}\big]\Big]^\frac1p   \\
  &\quad+ \mathbb E\Big[\mathbb E\big[|Y^{(j)}_{2^{k+1}+1}|^p\big|\mathcal F^{(j+1)}_{-1}\big]\Big]^\frac1p \\
  & \leq 2\mathbb E\big[\big|Y^{(j)}_{2^{k+1}}\big|^p\big]^\frac1p + 2\mathbb E\big[\big|Y^{(j)}_{2^{k+1}+1}\big|^p\big]^\frac1p.
\end{align*}
Putting it all together, we obtain
\begin{align*}
\mathbb E\big[\big|Y^{(0)}_0 &+ \cdots + Y^{(0)}_{2^k-1}\big|^p\big]^\frac1p   \\
  & \lesssim_p \sum_{0\leq j<k}\sqrt{n_{j+1}}\cdot\left(\frac1{2n_{j+1}}\Big(\mathbb E\big[\big|Y^{(j)}_0\big|^p\big] + \cdots + \mathbb E\big[\big|Y^{(j)}_{2n_{j+1}}\big|^p\big]\Big)\right)^\frac1p   \\
  &\quad+ \mathbb E\big[\big|Y^{(k)}_0\big|^p\big]^\frac1p \\
  & \lesssim_p \sum_{0\leq j\leq k} 2^{(k-j)/2}\cdot\left(\frac1{2^{k-j}}\Big(\mathbb E\big[\big|Y^{(j)}_0\big|^p\big] + \cdots + \mathbb E\big[\big|Y^{(j)}_{2^{k-j}-1}\big|^p\big]\Big)\right)^\frac1p.
\end{align*}
\end{proof}

\begin{proof}[Proof of Proposition \ref{lem.kolmogorovwk}]
It is enough to prove that we have for any $T\geq 1$ and $p\geq2$, the estimate
$$
\mathbb E\left[\left|\int_0^Tv_t\,dt\right|^p\right]\lesssim_p T^{p/2}.
$$
Fix the integer $k$ such that $T/2\leq 2^k<T$, and define
$$
X_j := \int_{jT2^{-k}}^{(j+1)T2^{-k}}v_t\,dt,\qquad \mathcal F_j = \sigma\Big(v_s,s\leq(j+1)T2^{-k}\Big).
$$
Since we assume that $v_0$ is distributed according to an invariant probability measure, we can actually have our process started for a time arbitrarily far in the past, so we can assume that $\mathcal F_j$ is well-defined for any $j\geq -2^{k+1}$. We can then write
\begin{align*}
\int_0^Tv_t\,dt & = \big(X_0 - \mathbb E\big[X_0|\mathcal F_{-1}\big]\big) + \cdots + \big(X_{2^k-1} - \mathbb E\big[X_{2^k-1}|\mathcal F_{2^k-2}\big]\big)   \\
      							& \quad+ \mathbb E\big[X_0|\mathcal F_{-1}\big] + \cdots + \mathbb E\big[X_{2^k-1}|\mathcal F_{2^k-2}\big].
\end{align*}
The first sum is a stationary martingale difference with respect to the $\sigma$-algebra $(\mathcal F_j)_{j\geq0}$; the second is the subject of the previous lemma. One then has the estimate
\begin{align*}
\mathbb E&\left[\left|\int_0^Tv_t\,dt\right|^p\right]^\frac1p \lesssim_p 2^{k/2}\,\mathbb E\Big[\big|X_0 - \mathbb E[X_0|\mathcal F_{-1}]\big|^p\Big]^\frac1p \\
  & \quad + 2^{k/2}\left(\mathbb E\Big[\big|Y^{(0)}_0\big|^p\Big]^\frac1p + \cdots + 2^{-k/2}\,\mathbb E\Big[\big|Y^{(k)}_0\big|^p\Big]^\frac1p\right) \\
  & \lesssim_p \sqrt T\left(\mathbb E\Big[\big|X_0\big|^p\Big]^\frac1p + \mathbb E\Big[\big|Y^{(0)}_0\big|^p\Big]^\frac1p + \cdots + 2^{-k/2}\,\mathbb E\Big[\big|Y^{(k)}_0\big|^p\Big]^\frac1p\right)
\end{align*}
with the notations of Lemma \ref{lem.martingaledecomposition}. In our setting,
$$
\|X_0\|_{L^p} = \mathbb E\left[\left|\int_0^{T2^{-k}}v_t\,dt\right|^p\right]^\frac1p \leq \left( T2^{-k}\right)^\frac pp \leq 2 
$$
and
$$
Y^{(j)}_0 = \mathbb E\left[\int_{2^j T2^{-k}}^{2^{j+1}T2^{-k}}v_t\,dt\,\middle|\mathcal F_{-1}\right] = \mathbb E_{v_0}\left[\int_{2^j T2^{-k}}^{2^{j+1}T2^{-k}}v_t\,dt\right].
$$
Note that we have from Corollary \ref{CorExpConvergence}
\begin{equation*}\begin{split}
\left|\mathbb E_{v_0}\left[\int_{2^j T2^{-k}}^{2^{j+1}T2^{-k}}v_t\,dt\right]\right| &\leq \int_{2^j T2^{-k}}^{2^{j+1}T2^{-k}}\big|\mathbb E_{v_0}[v_t]\big|\,dt \lesssim \int_{2^j T2^{-k}}^\infty e^{-t/\tau}\,dt   \\
&\lesssim e^{-2^{j-1}/\tau}.
\end{split}\end{equation*}
We can insert this in the upper bound for the integral to obtain
\begin{equation}   \label{EqTightnessCAlpha}
\left\|\int_0^Tv_t\,dt\right\|_{L^p} \leq \left(1 + \sum_{j\geq0}2^{-j/2}e^{-2^{j-1}/\tau}\right) \sqrt T.
\end{equation}
\end{proof}

\subsubsection{Convergence in H\"older spaces}
\label{sssec.convergenceHolder}

We are ready to prove Proposition \ref{prop.convergenceHolder} on the weak convergence of $X^\sigma$ in any H\"older space $C^\alpha([0,1],H)$ to the Brownian motion in $H$ with covariance given by formula \eqref{EqCovarianceLimitBM}.

\begin{proof}[Proof of Proposition \ref{prop.convergenceHolder}]
From the tightness result in $C^\alpha([0,1],H)$ stated in Proposition \ref{lem.kolmogorovwk}, it is sufficient to show that $X^\sigma$ converges weakly in $C^0([0,1],H)$ to the above mentionned Brownian motion. If suffices for that purpose to see that for a finite sequence of $t_1<\cdots<t_n$, and a small enough positive delay $\epsilon$, the random variables $X^\sigma_{t_i-\epsilon}-X^\sigma_{t_{i-1}}$ converge to finitely many independent Gaussian random variables, with corresponding covariances $(t_i-\epsilon-t_{i-1})$ times the covariance \eqref{EqCovarianceLimitBM}. One can for instance use Dedecker and Merlev\`ede conditional central limit theorem, from Theorem 1 in \cite{DedeckerMerlevede}, to see the convergence to a Gaussian limit with the expected covariance operator. One checks that the four conditions (a)-(d) from Theorem 1 in \cite{DedeckerMerlevede} hold true in our setting. We denote by $\ell\in H^*$ a continuous linear form on $H$.
\begin{itemize}
   \item[(a)] We have from the decorrelation result in Corollary \ref{CorExpConvergence} that
   $$
   \Big|\mathbb{E}\Big[\ell\Big(T^{-1/2}\int_0^T v_tdt\Big)\Big| v_0\Big]\Big| \lesssim T^{-1/2}\int_0^n e^{-t/\tau}dt,   
   $$
   converges to $0$ in $L^1$ as $T$ goes to $\infty$.   \vspace{0.1cm}
   
   \item[(b)] The decorrelation result in Corollary \ref{CorExpConvergence} justifies the use of dominated convergence to justify that
   \begin{equation*}\begin{split}
   \frac{1}{T}\,\mathbb{E}\left[\left(\int_0^T\ell(v_s)ds\right)^2\bigg| \,v_0\right]
   \end{split}\end{equation*}
   converges in $L^1$ as $T$ goes to $\infty$. The limit $\beta^2_\ell$ is constant, from the ergodic behaviour of the velocity process.  \vspace{0.1cm}
   
   \item[(c)] The $L^p$ estimate \eqref{EqTightnessCAlpha} with any $p>1$ shows that the family 
   $$
   \left(\frac{1}{T}\,\int_0^T\ell(v_s)ds\right)_{T\geq 1}
   $$ 
   is uniformly integrable.   \vspace{0.1cm}
   
   \item[(d)] Last we have, by stationarity of the velocity process, that
   \begin{equation*}\begin{split}
   \frac{1}{T}\,\mathbb{E}\left[\left\|\int_0^T v_sds\right\|^2\right] &= \frac{2}{T}\,\iint_{0\leq s\leq t\leq T }\mathbb{E}\big[(v_s,v_t)\big]\,dsdt   \\
   &= 2\int_0^T \mathbb{E}\big[(v_0,v_r)\big]\,dr
   \end{split}\end{equation*}
   converges indeed to a finite limite as $T$ goes to $\infty$. This limit is given by the finite sum $\sum_{i\geq 0} \beta^2_{\ell_i}$, where $(\ell_i)$ stands for an orthonormal basis of $H^*$.
\end{itemize}
One reads the independence of the limit Gaussian random variables corresponding to different time intervals $[t_{i-1},t_i-\epsilon]$ on their null correlation; the latter is a direct consequence of the decorrelation property of Corollary \ref{CorExpConvergence}. The statement of Proposition \ref{prop.convergenceHolder} follows then from the conclusion of Dedecker and Merlev\`ede convergence result. The identification of the covariance \eqref{EqCovarianceLimitBM} is a consequence of the corresponding statement, Proposition 3.4, in the finite dimensional setting of \cite{PierreFiniteDim}.
\end{proof}

We aim now at improving the weak invariance principle of Proposition \ref{prop.convergenceHolder} into a weak invariance principle for the canonical rough path associated with $X^\sigma$. This will be crucial in Section \ref{SectionKBM} when defining kinetic Brownian motion in a diffeomorphism space as the solution of a differential equation driven by $X^\sigma$, and proving the interpolation results of Theorem \ref{ThmHomogenizationM} and Theorem \ref{ThmHomogenizationM0} by a continuity argument. We recall in the next section all we need to know from rough paths theory.

\subsection{The flavor of rough paths theory}
\label{SubsectionRP}

It is not our purpose here to give a detailled account of rough paths theory. We refer the reader to the lecture notes \cite{LyonsStFlour, FrizHairer, BaudoinLNEMS, BailleulFlows}, for introductions to the subject from different point of views. The following will be sufficient for our needs here.

\smallskip

Rough paths theory is a theory of ordinary differential equations 
\begin{equation}\label{EqCODE}
dz_t = \sum_{i=1}^\ell V_i(z_t)\,dh^i_t,
\end{equation}
controlled by non-smooth signals $h\in C^\alpha([0,1],\mathbb{R}^\ell)$. The point $z_t$ moves here in $\mathbb{R}^d$, where we are given sufficiently regular vector fields $V_i$. Young integration theory \cite{Young, LyonsYoung} allows to make sense of the integral $\int_0^\cdot V(y_s)dh_s$, for  paths $y,h$ that are $\alpha$-H\"older, for $\alpha>\frac{1}{2}$, as an $\mathbb{R}^d$-valued $\alpha$-H\"older path depending in locally Lipscthiz way on $y$ and $h$. This allows to formulate the differential equation \eqref{EqCODE} as a fixed point problem for a contracting map from $C^\alpha([0,1],\mathbb{R}^d)$ into itself, and to obtain as a consequence the continuous dependence of the solution path on the driving control $h$. Lyons-Young theory cannot be used for $\alpha$-H\"older controls with $\alpha<\frac{1}{2}$, as even in $\mathbb{R}$, with one dimensional controls, there exists no \textit{continuous} bilinear form on $C^\alpha([0,1],\mathbb{R})\times C^\alpha([0,1],\mathbb{R})$ extending the Riemann integral $\int_0^1 y_tdh_t$, of smooth paths $y,h$; see Propositon 1.29 of \cite{LyonsStFlour}. (This can be understood from a Fourier analysis point of view as a consequence of the fact that the resonant operator from Littlewood-Paley theory is unbounded on $C^\alpha([0,1],\mathbb{R})\times C^{\alpha-1}([0,1],\mathbb{R})$, when $2\alpha-1< 0$; see \cite{BCD}.) Lyons' deep insight was to realize that what really fixes the dynamics of a solution path to the controlled differential equation \eqref{EqCODE} is not only the increments $dh_t$, or $h_t-h_s$, of the control, but rather the increments of $h$ together with the increments of a number of its iterated integrals. This can be understood from the fact that for a smooth control, one has the Taylor-type expansion
\begin{equation*}\begin{split}
f(z_t) = f(z_s) &+ \left(\int_s^t dh^i_u\right)(V_if)(z_s) + \left(\int_{s\leq u_2\leq u_1\leq t} dh^j_{u_2}dh^k_{u_1}\right)\big(V_jV_kf\big)(z_s)    \\
&+ \int_{s\leq u_3\leq u_2\leq u_1\leq t} \big(V_n V_jV_kf\big)(z_{u_3})\,dh^n_{u_3}dh^j_{u_2}dh^k_{u_1},
\end{split}\end{equation*}
for any real-valued smooth function $f$ on $\mathbb{R}^d$. (We use Einstein' summation convention, with integer indices in $[1,\ell]$.) We consider here the vector fields $V_i$ as first order differential operators, so we have for instance
$$
V_jV_kf = (D^2f)(V_j,V_k) + (Df)\big((DV_k)(V_j)\big).
$$
The usual first order Euler scheme
$$
z_t \simeq z_s + (h^i_t-h^i_s)V_i(z_s),
$$
is refined by the above second order Milstein scheme
$$
z_t \simeq z_s + (h^i_t-h^i_s)V_i(z_s) + \left(\int_{s\leq u_2\leq u_1\leq t} dh^j_{u_2}dh^k_{u_1}\right)\big(V_jV_k\big)(z_s),
$$
whose one step error is given explicitly by the above triple integral, of order $\vert t-s\vert^3$, for a $C^1$ control $h$. The iterated integrals 
$$
\int_{s\leq u_2\leq u_1\leq t} dh^j_{u_2}dh^k_{u_1} = \int_{s\leq u_1\leq t} \big(h^j_{u_1}-h^j_s\big)\,dh^k_{u_1},
$$
are however meaningless for a control $h\in C^\alpha([0,1],\mathbb{R}^\ell)$, when $\alpha\leq 1/2$. A $p$-rough path $\bf X$ above $h$, with $2\leq p<3$, is exactly the datum of $h$ together with a quantity, indexed by $(s\leq t)$, that plays the role of these iterated integrals. Set $[0,1]_\leq := \big\{(s,t)\in [0,1]^2\,;\,s\leq t\big\}$, and recall that $\big(\mathbb{R}^\ell\big)^{\otimes 2}$ stands for the set of $\ell\times \ell$ matrices.

\begin{definition}
Fix $2\leq p<3$. A \textbf{$p$-rough path $\bf X$} over $\mathbb{R}^\ell$, is a map
\begin{equation*}\begin{split}
[0,1]_{\leq} &\rightarrow \mathbb{R}^\ell\times\big(\mathbb{R}^\ell\big)^{\otimes 2}   \\
(s,t) &\mapsto \big(X_{ts},\mathbb{X}_{ts}\big),
\end{split}\end{equation*}
such that 
$$
X_{ts} = h_t-h_s,
$$
for a $C^\alpha([0,1],\mathbb{R}^\ell)$ path $h$, and $\mathbb{X}$ satisfies Chen's relations
$$
\mathbb{X}_{ts} = \mathbb{X}_{tu} + X_{us}\otimes X_{tu} + \mathbb{X}_{us},
$$
for all $0\leq s\leq u\leq t\leq 1$. The $1/p$-H\"older norm on $X$, and the $2/p$-H\"older norm on $\mathbb{X}$, define jointly a complete metric on the nonlinear space ${\sf RP}(p)$ of $p$-rough paths.
\end{definition}

Chen's relation accouts for the fact that for a $C^1$ path $h$, one has indeed 
\begin{equation*}\begin{split}
\int_{s\leq u_1\leq t} \big(h^j_{u_1}-h^j_s\big)\,dh^k_{u_1} &= \int_{u\leq u_1\leq t} \big(h^j_{u_1}-h^j_u\big)\,dh^k_{u_1} + \big(h^j_u-h^j_s\big)\big(h^k_t-h^k_u\big)   \\
&\quad+ \int_{s\leq u_1\leq u} \big(h^j_{u_1}-h^j_s\big)\,dh^k_{u_1}
\end{split}\end{equation*}
for any $0\leq s\leq u\leq t\leq 1$, and any indices $1\leq j,k\leq \ell$. One has also in that case, by integration by parts, the identiy
\begin{equation*}\begin{split}
&\int_{s\leq u_1\leq t} \big(h^j_{u_1}-h^j_s\big)\,dh^k_{u_1} + \int_{s\leq u_1\leq t} \big(h^k_{u_1}-h^k_s\big)\,dh^j_{u_1}   \\
&= \frac{1}{2}\, \big(h_t^j-h^j_s\big)\big(h_t^k-h^k_s\big).
\end{split}\end{equation*}
A $p$-rough path $\bfX$ such that the symmetric part of $\mathbb{X}_{ts}$ is equal to $\frac{1}{2}\,X_{ts}\otimes X_{ts}$, for all times $0\leq s\leq t\leq 1$, is called \textbf{weakly geometric}. The set of weakly geometric $p$-rough paths is closed in ${\sf RP}(p)$. For a $C^1$ path $h$ defined on the time interval $[0,1]$, setting $X_{ts} := h_t-h_s$ and 
$$
\mathbb{X}_{ts} := \int_s^t X_{us}\otimes dX_u,
$$
for all $0\leq s\leq t\leq 1$, defines a weak geometric $p$-rough path, for any $2\leq p<3$, called the \textbf{canonical rough path associated with $h$.} Let $B$ stand for an $\ell$-dimensional Brownian motion. The Stratonovich Brownian rough path ${\bf B} = (B,\mathbb{B})$ is defined by 
$$
\mathbb{B}_{ts} := \int_{s\leq u\leq t} (B_u-B_s)\otimes {\circ d}B_u.
$$
It is almost surely a weak geometric $p$-rough path, for any $2<p<3$.

\begin{definition}   \label{DefnRDESolution}
Let $C^3_b$ vector fields $(V_i)_{1\leq i\leq \ell}$ on $\mathbb{R}^d$ be given, together with a weak geometric $p$-rough path $\bfX$ over $\mathbb{R}^\ell$. A path $(z_t)_{0\leq t\leq 1}$ is said to be a solution to the rough differential equation 
\begin{equation}\label{EqRDE}
dz_t = V(z_t)\,d{\bfX}_t
\end{equation}
if there is an exponent $a>1$, such that one has
\begin{equation}\begin{split}   \label{EqDefnRDESolution}
f(z_t) = f(z_s) &+ X_{ts}^i(V_if)(z_s) + \mathbb{X}^{jk}_{ts}\big(V_jV_kf\big)(z_s) + O\big(\vert t-s\vert^a\big),
\end{split}\end{equation}
for any smooth real-valued function $f$ on $\mathbb{R}^d$, and any times $0\leq s\leq t\leq 1$. 
\end{definition}

The above $O(\cdot)$ term is allowed to depend on $f$. Importantly, the solution of a rough differential equation driven by the Stratonovich Brownian rough path coincides almost surely with the solution of the corresponding Stratonovich differential equation; see e.g. the lecture notes \cite{FrizHairer, BailleulLN}.

\begin{theorem}[Lyons' universal limit theorem]   \label{ThmRDESolution}
The rough differential equation \eqref{EqRDE} has a unique solution. It is an element of $C^{1/p}([0,1],\mathbb{R}^d)$ that depends continuously on $\bfX$.
\end{theorem}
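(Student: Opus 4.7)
The plan is to follow Davie's approach: prove local existence via a Picard iteration driven by Gubinelli's sewing lemma, patch local solutions into a global one, and derive uniqueness and continuous dependence on $\bfX$ from the same contraction estimates. Recall the sewing lemma: given a two-parameter map $\mu:[0,1]_\le\to\mathbb{R}^d$ with $|\mu_{ts}-\mu_{tu}-\mu_{us}|\lesssim|t-s|^a$ for some $a>1$, there exists a unique path $I$, up to an additive constant, satisfying $|I_t-I_s-\mu_{ts}|\lesssim|t-s|^a$, and $I$ depends linearly and continuously on $\mu$.

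Fix $z_0\in\mathbb{R}^d$ and a horizon $T\in(0,1]$, and introduce on $C^{1/p}([0,T],\mathbb{R}^d)$ the map $\Phi$ that sends a candidate path $y$ to the unique path starting at $z_0$ produced by the sewing lemma from the Milstein-type two-parameter expression
$$
\mu_{ts}(y) := X^i_{ts}\,V_i(y_s) + \mathbb{X}^{jk}_{ts}\,\big((DV_k)V_j\big)(y_s).
$$
Using Chen's relation together with a first-order Taylor expansion of $V_i$ and a zeroth-order one of $(DV_k)V_j$, one checks that the defect $\mu_{ts}(y)-\mu_{tu}(y)-\mu_{us}(y)$ is of order $|t-s|^{3/p}$: the leading term $X^i_{tu}\bigl[V_i(y_s)-V_i(y_u)\bigr]\approx -X^i_{tu}X^j_{us}(DV_iV_j)(y_s)$ cancels exactly the cross-term $X^j_{us}X^k_{tu}(DV_kV_j)(y_s)$ that Chen's relation produces from the $\mathbb{X}$-contribution. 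Since $p<3$, we have $3/p>1$ and the sewing lemma applies, producing a well defined $\Phi(y)\in C^{1/p}$. On a small-enough horizon $T$, depending only on the $C^3_b$ norms of the $V_i$ and on the rough-path norm of $\bfX$, the map $\Phi$ stabilises a ball centered at the constant path $z_0$ and is a strict contraction for the $1/p$-H\"older seminorm. Its fixed point is a local solution in the sense of Definition \ref{DefnRDESolution}, the test against an arbitrary $f$ being recovered by Taylor-expanding $f$ around $z_s$.

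Global existence follows by iterating the local construction from the endpoint of each step; the uniform lower bound on $T$ guarantees termination in finitely many steps and produces a path in $C^{1/p}([0,1],\mathbb{R}^d)$. Uniqueness on $[0,1]$ is obtained by applying the same contraction estimate to the difference of two solutions sharing an initial condition. Continuity in $\bfX$ is deduced from the linear continuous dependence of the sewing-lemma output on its input $\mu$ combined with a standard perturbation argument comparing the fixed points of $\Phi$ associated with two nearby rough paths $\bfX$ and $\tilde\bfX$.

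The main obstacle is the cocycle-defect estimate that feeds the sewing lemma: one must end up with an exponent strictly greater than $1$, and this relies on the precise cancellation identified above between the first-order Taylor correction of $V_i(y_u)-V_i(y_s)$ and the Chen cross-term coming from $\mathbb{X}_{ts}=\mathbb{X}_{tu}+X_{us}\otimes X_{tu}+\mathbb{X}_{us}$. Once this cancellation is isolated, the remaining terms are second-order Taylor remainders of $V_i$ and first-order remainders of $(DV_k)V_j$, and they are of order $|t-s|^{3/p}$ precisely because the $V_i$ are assumed to be $C^3_b$. Without this cancellation the naive defect would only be of size $|t-s|^{2/p}\le|t-s|$, which the sewing lemma cannot handle and which would leave no way to define the driving rough integral.
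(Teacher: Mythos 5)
You should first note that the paper does not actually prove this statement: it is Lyons' universal limit theorem, recalled as background in Section \ref{SubsectionRP} with references to the rough-paths literature, so there is no internal proof to compare yours with. Your sketch follows the standard sewing-lemma/fixed-point route and correctly isolates the essential Chen-versus-Taylor cancellation, but there is a genuine gap in how the fixed point is set up.

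You define $\Phi$ on a ball of $C^{1/p}([0,T],\mathbb{R}^d)$, yet the defect bound you feed into the sewing lemma,
$$
\big|\mu_{ts}(y)-\mu_{tu}(y)-\mu_{us}(y)\big|\lesssim |t-s|^{3/p},
$$
is false for a generic $1/p$-H\"older path $y$. After using $X_{ts}=X_{tu}+X_{us}$ and Chen's relation, the defect contains $X^i_{tu}\bigl[V_i(y_s)-V_i(y_u)\bigr]$ and $X^j_{us}X^k_{tu}\bigl((DV_k)V_j\bigr)(y_s)$, and these cancel to the required order only if $y_u-y_s=X^j_{us}V_j(y_s)+O(|u-s|^{2/p})$, i.e.\ only if $y$ is controlled by $X$ with Gubinelli derivative $V(y)$. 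For an arbitrary element of a $C^{1/p}$-ball this fails; the first term is then only $O(|t-s|^{2/p})$ with $2/p\leq 1$, the sewing lemma does not apply, and $\Phi(y)$ is not even defined on your chosen domain. The standard repair is to run the Picard iteration on the space of paths controlled by $X$, namely pairs $(y,y')$ with remainder $y_{ts}-y'_sX_{ts}=O(|t-s|^{2/p})$, mapped to $\big(\Phi(y),V(\Phi(y))\big)$, with contraction measured in the corresponding controlled-path seminorms; alternatively, follow Davie's original route and work directly with the discrete Milstein scheme, which avoids the fixed point altogether. The same remark affects your continuity argument: comparing the fixed points attached to $\bfX$ and $\tilde{\bfX}$ requires estimating the difference of the two germs in controlled-path norms, not merely invoking linearity of the sewing map. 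Uniqueness, by contrast, is unaffected, since Definition \ref{DefnRDESolution} applied to coordinate functions shows that any solution is automatically controlled by $X$ with derivative $V(z)$; it is only the existence and stability steps that need the enlarged space.
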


The map that associates to the driving rough path the solution to a given rough differential equation, seen as an element of $C^{1/p}([0,1],\mathbb{R}^d)$, is called the \textbf{It\^o-Lyons solution map}. If $({\bfX}^n)$ is a sequence of random geometric $p$-rough path in $\mathbb{R}^\ell$, converging weakly to a limit random geometric $p$-rough path $\bfX$, the continuity of the It\^o-Lyons solution map gives for free the weak convergence in $C^{1/p}([0,1],\mathbb{R}^d)$ of the laws of the solutions to Equation \eqref{EqRDE} driven by the ${\bfX}^n$, to the law of the solution of that equation driven by $\bfX$. 

The theory works perfectly well for dynamics with values in Banach spaces or Banach manifolds, and driving rough paths ${\bfX}=(X,\mathbb{X})$, with $X$ taking values in a Banach space $E$. One needs to take care in that setting to the tensor norm used to define the completion of the algebraic tensor space $E\otimes_a E$, as this may produce non-equivalent norms, and that norm is used to define the norm of a rough path. Note that families of vector fields $(V_1,\dots, V_\ell)$ are then replaced in that setting by one forms on $E$ with values in the space of vector fields on the space where the dynamics takes place. See e.g. Lyons' original work \cite{Lyons98} or Cass and Weidner's work \cite{CassWeidner} for the details. See e.g. \cite{BailleulSeminaire} for a simple proof of Lyons' universal limit theorem in that general setting.

The vector fields in Definition \ref{DefnRDESolution} and Theorem \ref{ThmRDESolution} are required to be $C^3_b$. This is used to get solution of equation \eqref{EqRDE} that are defined on the whole time interval $[0,1]$. Only local in time existence results can be obtained when working with unbounded vector fields, or on a manifold. The Taylor-like expansion property \eqref{EqDefnRDESolution} defining a solution path is then only required to hold for each time $s$, for $t$ sufficiently close to $s$. One still has continuity of the solution path with respect to the driving rough path, in an adapted sense. See e.g. Section 2.4.2 of \cite{ABT}. This continuity property is sufficient to obtain the local weak convergence of the laws of the solution path to the corresponding limit path, for random driving weak geometric $p$-rough paths converging weakly to a limit random weak geometric $p$-rough path. See Definition \ref{DefnLocalWeakConvergence} for the definition of local weak convergence.

\smallskip

So far, we have defined kinetic Brownian motion $(x^\sigma_t,v^\sigma_t)$ in $H$ from its unit velocity process $v^\sigma$. We have seen in Proposition \ref{prop.convergenceHolder} that its time rescaled position process $(X^\sigma_t) := (x^\sigma_{\sigma^2t})$ is converging weakly in $C^\alpha\big([0,1],H\big)$ to a Brownian motion with explicit covariance \eqref{EqCovarianceLimitBM}, for any $\alpha<1/2$. We prove in the next section that the canonical rough path ${\bfX}^\sigma$ associated with $X^\sigma$ converges weakly as a weak geometric $p$-rough path to the Stratonovich Brownian rough path associated with the Brownian motion with covariance \eqref{EqCovarianceLimitBM}, for any $2<p<3$. This convergence result will be instrumental in Section \ref{SectionKBM} to prove that the Cartan development in diffeomorphism spaces of the time rescaled kinetic Brownian motion in Hilbert spaces of vector fields converge to some limit dynamics as $\sigma$ increases to $\infty$. This will come as a direct consequence of the continuity of the It\^o-Lyons solution map.

\begin{remark}
The idea of using rough paths theory for proving elementary homogenization results was first tested in the work \emph{\cite{FrizGassiatLyons}} of Friz, Gassiat and Lyons, in their study of the so-called \emph{physical Brownian motion in a magnetic field.} That random process is described as a $C^1$ path $(x_t)_{0\leq t\leq 1}$ in $\R^d$ modeling the motion of an object of mass $m$, with momentum $p = m\dot x$, subject to a damping force and a magnetic field. Its momentum satisfies a stochastic differential equation of Ornstein-Uhlenbeck form
$$
dp_t = -\frac{1}{m}\,Mp_tdt + dB_t,
$$
for some matrix $M$, whose eigenvalues all have positive real parts, and $B$ is a $d$-dimensional Brownian motion. While the process $(Mx_t)_{0\leq t \leq 1}$ is easily seen to converge to a Brownian motion $W$, its rough path lift is shown to converge in a rough paths sense in $L^q$, for any $q\geq 2$, to a random rough path \emph{different from} the Stratonovich Brownian rough path associated to $W$. 

A number of works have followed this approach to homogenization problems for fast-slow systems; see \cite{ABT, KellyMelbourne1, KellyMelbourne2, BailleulCatellier, CFKMZh} for a sample.
\end{remark}

\subsection{Rough paths invariance principle for the canonical lift}
\label{SubsectionCvgceRP}

As in Section \ref{SubsectionCvgceX}, \textit{we assume in all of this section that the initial condition $v_0$ of the velocity process of kinetic Brownian motion is distribued according to its invariant probability measure $\mu$, from Theorem \ref{PropInvariantMeasure}.} 

\smallskip

Let ${\mathbf X}^\sigma = (X^\sigma,\mathbb{X}^\sigma)$ stand for the canonical rough path associated to the random $\mathcal C^1$ path $X^\sigma$, where we recall that
$$
\mathbb X^\sigma_{ts} = \int_s^t(X^\sigma_u-X^\sigma_s)\otimes dX^\sigma_u  = \frac1{\sigma^4}\int_{\sigma^4s}^{\sigma^4t}\int_{\sigma^4s}^uv_r\otimes v_u\,drdu.
$$
Recall that the tensor space $H\otimes H$ is equipped with its natural complete Hilbert(-Schmidt) norm.

\subsubsection{Tightness in rough paths space}
\label{sssec.tighnessRP}

\begin{proposition}
\label{lem.kolmogorovRP}
For any $p\geq2$, we have
$$
\sup_{\sigma>0}\mathbb E\big[|\mathbb X^\sigma_{t,s}|^p\big]\lesssim|t-s|^p.
$$
\end{proposition}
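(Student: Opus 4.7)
The plan is to mimic the strategy used in the proof of Proposition \ref{lem.kolmogorovwk}: reduce to a fixed-horizon estimate by stationarity, then apply the martingale/coboundary decomposition of Lemma \ref{lem.martingaledecomposition} together with the exponential decorrelation of Corollary \ref{CorExpConvergence}. Using stationarity of $v$ (started from $\mu$) and the scaling $T := \sigma^4(t-s)$, one has in distribution
$$
\mathbb{X}^\sigma_{ts} \overset{\mathcal{L}}{=} \frac{1}{\sigma^4}\, J_T, \qquad J_T := \int_0^T\!\!\int_0^u v_r\otimes v_u\,dr\,du,
$$
so the required bound reduces to showing $\|J_T\|_{L^p}\lesssim_p T$.

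I would first split $J_T$ into its symmetric and antisymmetric parts. Writing $W_T := \int_0^T v_r\,dr$, the classical integration by parts $d(W\otimes W) = v\otimes W + W\otimes v$ gives $J_T + J_T^* = W_T\otimes W_T$, hence
$$
J_T \,=\, \tfrac{1}{2}\, W_T \otimes W_T \,+\, \tfrac{1}{2}(J_T - J_T^*).
$$
The symmetric part is handled directly by Hilbert--Schmidt bilinearity: $\|W_T\otimes W_T\|_{L^p}\leq \|W_T\|_{L^{2p}}^2 \lesssim_p T$ by the estimate \eqref{EqTightnessCAlpha} that was established in the proof of Proposition \ref{lem.kolmogorovwk}.

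The antisymmetric part $K_T := J_T - J_T^* = \int_0^T [W_u,v_u]\,du$, with the bracket $[a,b]:=a\otimes b - b\otimes a$, is where the work lies. Decomposing $[0,T]$ into unit intervals and writing $W_u = W_k + (W_u-W_k)$ on $[k,k+1]$, I would split
$$
K_T \,=\, \sum_{k<T} [W_k,\delta_k] \,+\, \sum_{k<T} R_k, \qquad \delta_k := W_{k+1}-W_k,\ \ R_k := \int_k^{k+1}[W_u-W_k,v_u]\,du.
$$
The remainder $(R_k)$ is a bounded stationary $\Lambda^2 H$-valued sequence ($|R_k|\leq 1$), so applying Lemma \ref{lem.martingaledecomposition} verbatim as in the proof of Proposition \ref{lem.kolmogorovwk} yields $\|\sum_{k<T} R_k\|_{L^p}\lesssim_p \sqrt{T}$, which is absorbed in the bound $O(T)$.

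The commutator sum $\sum_{0\leq k<K} [W_k,\delta_k] = \sum_{0\leq j<k<K}[\delta_j,\delta_k]$ is a discrete Lévy area for the stationary sequence $(\delta_k)$. The plan is to rerun the dyadic martingale/coboundary machinery of Lemma \ref{lem.martingaledecomposition} on the $\Lambda^2 H$-valued sequence $A_k := [W_k,\delta_k]$, using as ingredients: stationarity of $(\delta_k)$ with $|\delta_k|\leq 1$; the size estimate $\|W_k\|_{L^{2p}}\lesssim_p \sqrt{k}$ from \eqref{EqTightnessCAlpha}; and crucially the exponential mixing bound $\|\mathbb{E}[\delta_k\,|\,\mathcal{F}_{k-m}]\|_{L^{2p}} \lesssim e^{-cm}$ inherited from Corollary \ref{CorExpConvergence} (since $\mathbb{E}_\mu[v]=0$). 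Combining these gives, for the coboundary residuals, $\|\mathbb{E}[A_k\mid \mathcal{F}_{k-m}]\|_{L^p}\lesssim \sqrt{k}\, e^{-cm}$, which is summable over the dyadic scales appearing in Lemma \ref{lem.martingaledecomposition}. The associated martingale differences are handled by BDG, exactly mirroring the scalar estimate, with the extra factor $\sqrt{k}$ absorbed by the exponential decay coming from conditioning on the far past.

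The main obstacle, and what forces the need for the above balance, is that unlike in Proposition \ref{lem.kolmogorovwk}, the summands $A_k = [W_k,\delta_k]$ themselves grow like $\sqrt{k}$; a blind application of BDG to martingale differences built from $(A_k)$ would yield only $K^{3/2}$. The sharp linear bound $K$ emerges from the same interplay that produces the finite variance of a Brownian Lévy area: the large factor $W_k$ is paired with the increment $\delta_k$, whose correlations with the past decay exponentially, so the effective size of each centred martingale difference is only $O(1)$ on average. Pushing this observation through the dyadic hierarchy of Lemma \ref{lem.martingaledecomposition} delivers $\|\sum_{k<K}[W_k,\delta_k]\|_{L^p}\lesssim_p K$, and therefore the announced estimate $\|\mathbb{X}^\sigma_{ts}\|_{L^p}\lesssim_p |t-s|$.
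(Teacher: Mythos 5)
Your overall architecture is sound and is essentially a reorganisation of the paper's own argument: both reduce by stationarity to a bound $\|J_T\|_{L^p}\lesssim_p T$ and both run the dyadic martingale/coboundary hierarchy of Lemma \ref{lem.martingaledecomposition} on unit (or $\delta$-) blocks of the double integral, using Corollary \ref{CorExpConvergence} for the coboundary residuals. Your preliminary symmetric/antisymmetric split is a genuine (and pleasant) difference: the paper works directly with the blocks $A_j=\int_{j\delta}^{(j+1)\delta}\int_0^t v_s\otimes v_t\,ds\,dt$ of the full double integral, whereas you dispose of the symmetric half for free via $J_T+J_T^*=W_T\otimes W_T$ and isolate the L\'evy-area difficulty. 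Two of your side remarks are off but harmless: a direct application of Lemma \ref{LemmaBDG} to martingale differences of size $\sqrt k\le\sqrt K$ gives $\sqrt K\cdot\sqrt K=K$, not $K^{3/2}$ (so the martingale part is actually unproblematic); and the remainder $\sum_k R_k$ need not be $O(\sqrt T)$ via Lemma \ref{lem.martingaledecomposition} (the conditional expectations $\mathbb E[R_k|\mathcal F_{k-m}]$ do not decay to $0$, since $\mathbb E_\mu[v_r\otimes v_u]\ne0$), but the trivial bound $|R_k|\le2$ already gives the $O(T)$ you need.

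The one genuine gap is the displayed key estimate $\|\mathbb E[A_k\mid\mathcal F_{k-m}]\|_{L^p}\lesssim\sqrt k\,e^{-cm}$ for $A_k=[W_k,\delta_k]$: it is false as stated. Writing $W_k=W_{k-m}+(W_k-W_{k-m})$, the piece $[W_{k-m},\mathbb E[\delta_k|\mathcal F_{k-m}]]$ does satisfy your bound (the first factor is measurable for the conditioning $\sigma$-algebra and the second decays exponentially by Corollary \ref{CorExpConvergence}), but the piece $\mathbb E\big[[W_k-W_{k-m},\delta_k]\,\big|\,\mathcal F_{k-m}\big]$ has both factors in the future of $\mathcal F_{k-m}$ and its conditional correlation does not vanish as $m\to\infty$; the tower property and exponential decorrelation only give a bound $O(1)$, uniform in $m$ but not decaying. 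This non-decaying contribution must be isolated and summed over each dyadic block, where it produces a term of order $2^n$ per block of length $2^n$; the weights $2^{(k-n)/2}$ of Lemma \ref{lem.martingaledecomposition} still absorb it, since $\sum_{n\le k}2^{(k-n)/2}\,2^n\lesssim 2^k$. This is exactly the $R^{(n)}_j$ versus $S^{(n)}_j$ splitting in the paper's proof, so your argument closes once you replace the incorrect one-line estimate by this two-term decomposition.
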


It follows in particular from Proposition \ref{lem.kolmogorovwk}, Lemma \ref{lem.kolmogorovRP} and the known Kolmogorov-Lamperti criterion for rough paths that the family of laws $\mathcal L({\bf X}^\sigma)$ is tight in $\mathrm{RP}(\alpha^{-1})$, for any $1/3<\alpha<1/2$.

\begin{proof}
The statement of the lemma is a consequence of the estimate
$$ 
\mathbb E\left[\bigg|\int_0^T\int_0^t v_s\otimes v_t\,dsdt\bigg|^p\right]\lesssim_p T^p,
$$
for $T\geq 1$; we prove the latter. We use for that purpose the same kind of multiscale martingale/coboundary decomposition as in the proof of Lemma \ref{lem.martingaledecomposition}. Let $k$ the unique integer such that 
$$
1\leq\delta:=T2^{-k}<2.
$$ 
Define
$$
A_j := \int_{j\delta}^{(j+1)\delta}\int_0^tv_s\otimes v_t\,dsdt,
$$
and 
$$
\widehat{\mathcal F}_j := \mathcal F_{(j+1)\delta}=\sigma\Big(v_s,s\leq(j+1)\delta\Big).
$$
As above, we can assume without loss of generality that $\widehat{\mathcal F}_j$ is defined for all $j\geq-2^{k+1}$, as $v_0$ is assumed to be distributed according to the invariant probability measure of the velocity process. Then the integral rewrites as
\begin{align}
\label{eq.doubleintdec}
\begin{split}
\int_0^T\int_0^tv_s\otimes v_t&\,dsdt   \\
    &= \Big(A_0 - \mathbb E\big[A_0|\widehat{\mathcal F}_{-1}\big]\Big) + \cdots + \Big(A_{2^k-1} - \mathbb E\big[A_{2^k-1}|\widehat{\mathcal F}_{2^k-2}\big]\Big)   \\
   & \quad + \mathbb E\big[A_0|\widehat{\mathcal F}_{-1}\big] + \cdots + \mathbb E\big[A_{2^k-1}|\widehat{\mathcal F}_{2^k-2}\big]
\end{split}
\end{align}
The first sum is a martingale difference with respect to $(\widehat{\mathcal F}_n)_{n\geq0}$, albeit not stationary,
\begin{equation*}\begin{split}
\mathbb E\bigg[\Big|\sum_{0\leq j<2^k}\Big(A_j - &\mathbb E\big[A_j|\widehat{\mathcal F}_{j-1}\big]\Big)\Big|^p\bigg]^\frac1p   \\
&\lesssim_p 2^{k/2} \,\bigg(2^{-k}\sum_{0\leq j<2^k}\mathbb E\Big[\big|A_j - \mathbb E\big[A_j|\widehat{\mathcal F}_{j-1}\big]\big|^p\Big]\bigg)^\frac1p \\
& \lesssim_p 2^{k/2} \,\bigg(2^{-k}\sum_{0\leq j<2^k}\mathbb E\big[\big|A_j\big|^p\big]\bigg)^\frac1p.
\end{split}\end{equation*}
Each term is controlled using Lemma \ref{lem.kolmogorovwk}, and the fact that $|v_t|=1$,
$$
\mathbb E\big[\big|A_j\big|^p\big] \leq \delta^{p-1}\int_{j\delta}^{(j+1)\delta}\mathbb E\Big[\Big|\int_0^tv_s\,ds\Big|^p\Big]\,dt
\lesssim_p \int_{j\delta}^{(j+1)\delta}t^{p/2}\,dt \lesssim (2^k)^{p/2},
$$
so the $L^p$ norm of the first sum in \eqref{eq.doubleintdec} is bounded above by $2^k$, up to a constant depending only on $p$.

\smallskip

The second sum in \ref{eq.doubleintdec} is treated as in the proof of Lemma \ref{lem.martingaledecomposition}. Set  here
$$
Z^{(n)}_j := \mathbb{E}\Big[A_{j2^n}+\cdots+A_{j2^n+(2^n-1)}\Big|\widehat{\mathcal{F}}^{(n)}_{j-1}\Big],
$$
with 
$$
\widehat{\mathcal{F}}^{(n)}_j := \widehat{\mathcal{F}}_{(j-1)2^n}.
$$
One has 
\begin{equation*}\begin{split}
\mathbb E\bigg[\bigg|\sum_{0\leq j<2^k}&\mathbb E\big[A_j\big|\widehat{\mathcal F}_{j-1}]\bigg|^p\bigg]^\frac1p   \\
&\lesssim_p \sum_{0\leq n\leq k} 2^{(k-n)/2} \left(\frac1{2^{k-n}}\Big(\mathbb E\big[\big|Z^{(n)}_0\big|^p\big] + \cdots + \mathbb E\big[\big|Z^{(n)}_{2^{k-n}-1}\big|^p\big]\Big)\right)^\frac1p,
\end{split}\end{equation*}
and we are left with the study of the moments of the $Z^{(n)}_j$. These variables are the conditional expectation of a double integral, which can be decomposed at time $(j-1)2^n\delta+\delta$ as follows.
\begin{equation*}\begin{split}
Z^{(n)}_j & = \mathbb E\left[\int_{j2^n\delta}^{(j+1)2^n\delta}\int_0^tv_s\otimes v_t\,dsdt\ \Big|\widehat{\mathcal F}_{(j-1)2^n} \right]   \\
& = \int_{j2^\ell\delta}^{(j+1)2^n\delta}\int_0^{(j-1)2^n\delta+\delta\vee0}v_s\otimes\mathbb E\Big[v_t\,\big|\,\widehat{\mathcal F}_{(j-1)2^n}\Big]\,dsdt  \\
&\quad  + \int_{j2^n\delta}^{(j+1)2^n\delta}\mathbb E\left[\int_{(j-1)2^n\delta+\delta\vee0}^tv_s\otimes\mathbb E\big[v_t|\mathcal F_s\big]\,ds\,\Big|\,\widehat{\mathcal F}_{(j-1)2^n}\right]\,dt   \\
& =: R^{(n)}_j + S^{(n)}_j.
\end{split}\end{equation*}
Because the conditioning is from a distant past, the first term is controlled using the exponential mixing and the estimate of Lemma \ref{lem.kolmogorovwk}.
\begin{align*}
\mathbb E\Big[\big|R^{(n)}_j\big|^p\Big]
& = \mathbb E\left[\left|\int_0^{(j-1)2^n\delta+\delta\vee0}v_s\,ds\right|^p\,\left|\int_{j2^n\delta}^{(j+1)2^n\delta}\mathbb E\big[v_t\,\big|\,\widehat{\mathcal F}_{(j-1)2^n}\big]\,dt\right|^p\right]   \\
& \lesssim \mathbb E\left[\left|\int_0^{(j-1)2^n\delta+\delta\vee0}v_s\,ds\right|^p\right]\,\left(\int_{2^n\delta}^{2^{n+1}\delta} e^{-(t-\delta)/\tau}\,dt\right)^p \\
& \lesssim_p (2^{k-n})^\frac p2(2^n)^\frac p2 e^{-p2^n/\tau}
\end{align*}
When dealing with the second term, we use the stationarity of $v$ to write
\begin{align*}
\big|S^{(n)}_j\big|
& \leq \int_{j2^n\delta}^{(j+1)2^n\delta}\mathbb E\left[\int_{(j-1)2^n\delta+\delta}^t\Big|v_s\otimes\mathbb E\big[v_t|\mathcal F_s\big]\Big|\,ds\,\Big|\,\widehat{\mathcal F}_{(j-1)2^n}\right]\,dt \\ & \stackrel{\mathcal L}
  = \int_{2^n\delta}^{2^{n+1}\delta}\mathbb E\left[\int_\delta^t\Big|v_s\otimes\mathbb E\big[v_t|\mathcal F_s\big]\Big|\,ds\,\Big|\,\widehat{\mathcal F}_0\right]\,dt   \\
& \lesssim \int_{2^n\delta}^{2^{n+1}\delta}\mathbb E\Big[\int_\delta^t e^{-(t-s)/\tau}\,ds\,\Big|\,\widehat{\mathcal F}_0\Big]\,dt \\
& \lesssim 2^n.
\end{align*}
Now we have, for each $0\leq n\leq k$ and $0\leq j<2^{k-n}$,
$$
\mathbb E\big[\big|Z^{(\ell)}_j\big|^p\big] \lesssim_p (2^{k-\ell})^\frac p2(2^\ell)^\frac p2\cdot e^{-p2^\ell/\tau} + 2^{p\ell}
$$
so we eventually have
\begin{align*}
\mathbb E\left[\left|\sum_{0\leq j<n}\mathbb E\big[A_j | \widehat{\mathcal F}_{j-1}\big]\right|^p\right]^\frac1p
& \lesssim_p \sum_{0\leq\ell\leq k}\big( 2^{k-\ell}2^{\ell/2} e^{-2^\ell/\tau} + 2^{(k-\ell)/2}2^\ell \big)   \\
& = 2^k\sum_{0\leq\ell\leq k}\big( 2^{-\ell/2}\, e^{-2^\ell/\tau} + 2^{-(k-\ell)/2} \big)   \\
& = 2^k\sum_{0\leq\ell\leq k}2^{-\ell/2}\big(1+ e^{-2^\ell/\tau}\big).
\end{align*}
This last sum is convergent, so the $L^p$ norm of the second term in \eqref{eq.doubleintdec} is no greater than a constant multiple of $2^k$.
\end{proof}

\subsubsection{Convergence in rough path space}
\label{sssec.convergenceRP}

We are now ready to state and prove the main result of this section.

\begin{theorem}
\label{ThmConvergenceRP}
Pick $1/3<\alpha<1/2$. The processes $\bfX^\sigma$ converge in law in ${\sf RP}(\alpha^{-1})$, as $\sigma$ goes to $\infty$, to the Stratonovich Brownian rough path with covariance
$$ 
C(\ell,\ell') = \int_0^\infty\mathbb E\Big[\ell(v_0)\ell'(v_t)+\ell'(v_0)\ell(v_t)\Big]\,dt
$$
\end{theorem}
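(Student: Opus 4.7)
The plan is tightness plus identification of the unique subsequential limit. Tightness of the laws $\{\mathcal L(\bfX^\sigma)\}_{\sigma>0}$ in ${\sf RP}(\alpha^{-1})$ follows from the Kolmogorov-Lamperti criterion for rough paths, whose uniform moment hypotheses are supplied by Proposition \ref{lem.kolmogorovwk} (first-level increments) and Proposition \ref{lem.kolmogorovRP} (second-level increments in the Hilbert-Schmidt norm on $H\otimes H$). This is immediate once both propositions are in hand.

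Given tightness, Prokhorov's theorem reduces the problem to showing that every subsequential weak limit $\mathbf Y$ coincides in law with the Stratonovich Brownian rough path $\mathbf B$. The first level of $\mathbf Y$ is, by Proposition \ref{prop.convergenceHolder}, a Brownian motion on $H$ with covariance $C$. Each $\bfX^\sigma$ is canonical above a $\mathcal C^1$ path, hence weakly geometric; since the set of weakly geometric rough paths is closed in ${\sf RP}(\alpha^{-1})$, the limit $\mathbf Y$ is itself weakly geometric, and its second level automatically has symmetric part $\tfrac12(B_t-B_s)^{\otimes 2}$, matching $\mathbf B$. Only the L\'evy area (antisymmetric part of the second level) needs to be pinned down.

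To pin the L\'evy area down I would apply a conditional CLT of Dedecker-Merlev\`ede type, exactly mirroring the structure of the proof of Proposition \ref{prop.convergenceHolder} but to the joint stationary functional of $(v_t)$ that generates both the first-level and the antisymmetric second-level increments over disjoint sub-intervals of $[0,1]$. The four hypotheses are checked as before: exponential mixing from Proposition \ref{prop.exponentialmixing} and Corollary \ref{CorExpConvergence} controls conditional expectations and forces asymptotic independence across disjoint intervals; stationarity and ergodicity deliver a constant asymptotic variance; the $L^p$ bound of Proposition \ref{lem.kolmogorovRP} supplies uniform integrability in Hilbert-Schmidt norm. The explicit computation of the limit covariance of first- and second-level increments then parallels Proposition 3.4 of \cite{PierreFiniteDim} in finite dimensions, and yields precisely the Stratonovich covariance structure on $\mathbf B$.

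The main obstacle is the infinite-dimensional bookkeeping in this last step: one must uniformly dominate the contribution of high-frequency modes to the antisymmetric part of $\mathbb{X}^\sigma_{ts}$ in Hilbert-Schmidt norm before passing to the limit. I would handle this by projecting onto $H_N = \mathrm{span}(e_0,\dots,e_N)$ and using the trace-class summability $\sum_{n>N}\alpha_n^2\to 0$ together with $\|v_t\|=1$ and the uniform moment bound of Proposition \ref{lem.kolmogorovRP}, then letting $N\to\infty$ after the CLT is applied on $H_N$. Once the limit covariance is identified as that of $\mathbf B$, the weakly geometric character of $\mathbf Y$ together with this covariance data force $\mathbf Y\stackrel{d}{=}\mathbf B$; tightness then upgrades subsequential convergence to full convergence as $\sigma\to\infty$, which is the statement of the theorem.
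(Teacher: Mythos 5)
Your tightness step and the reduction to finite-dimensional projections are exactly what the paper does (Propositions \ref{lem.kolmogorovwk} and \ref{lem.kolmogorovRP} plus the Kolmogorov--Lamperti criterion, then a monotone class argument on the coordinate projections). The gap is in the identification of the L\'evy area, and it is twofold. First, a Dedecker--Merlev\`ede conditional CLT produces a \emph{Gaussian} limit for normalized partial sums of a stationary functional, but the L\'evy area of the limit is not Gaussian (its law is of $1/\cosh$ type), and the antisymmetric part of $\mathbb{X}^\sigma_{ts}$ is not an additive functional of $(v_t)$ over subintervals: by Chen's relation it picks up cross terms $X_{us}\otimes X_{tu}$, so it is a genuinely second-order object that scales like $t-s$ rather than $\sqrt{t-s}$. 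There is no stationary sequence whose normalized partial sums equal the area, so the CLT you invoke does not apply to it. Second, and more seriously, your closing claim that the weakly geometric character of $\mathbf Y$ together with the covariance of its first level forces $\mathbf Y\stackrel{d}{=}\mathbf B$ is false: the Friz--Gassiat--Lyons example of physical Brownian motion in a magnetic field, recalled in Section \ref{SubsectionRP} of this paper, is precisely a weakly geometric rough path whose first level is Brownian but whose area carries a deterministic anomaly, so it differs from the Stratonovich lift. Covariance data on the first level cannot rule out such an area drift.

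What the paper does instead is show that the projected limit $\underline{\bf X}$ is a L\'evy process in the step-$2$ nilpotent group $G^2_d$ (stationarity is inherited from the $\bfX^\sigma$, and independence of increments comes from the decay of the mixing coefficient in Corollary \ref{CorMixingCoefficient}, not from a CLT), and then identifies its generator with that of the Stratonovich Brownian rough path by a Taylor expansion of $f(\underline B_t,\underline{\mathbb A}_t)-f(\underline B_t,\underline{\mathbb A}^B_t)$: since both antisymmetric parts are \emph{centered} and $O(t)$ in $L^2$ (by Proposition \ref{lem.kolmogorovRP}), the difference of generators vanishes. The centering of $\underline{\mathbb A}_t$ is exactly the input that excludes the area anomaly, and your argument contains no substitute for it. To repair your proof you would need either this generator comparison or a direct computation showing that $\mathbb E[\underline{\mathbb A}^\sigma_t]\to 0$ together with a characterization of $G^2_d$-valued L\'evy processes; the covariance-plus-geometricity route cannot close the argument.
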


Let $\bf X$ be a random weak geometric $\alpha^{-1}$-rough path with distribution an arbitrary limit point of the  family of laws of the $\bfX^\sigma$. Write ${\bf X} = (B,\mathbb{X})$, with $B$ a Brownian motion with the above covariance. Denote by $\underline{\bf X}$ the projection of $\bf X$ on the finite dimensional space generated by the first $d$ vectors of the basis $(e_i)$ from Section \ref{SubsectionBMSphere} -- we use below the associated coordinate system. Using a monotone class argument and the tightness result stated in Lemma \ref{lem.kolmogorovRP}, the statement of Theorem \ref{ThmConvergenceRP} is a consequence of the following result, given that $d\geq 1$ is arbitrary.

\begin{lemma}
The $d$-dimensional random rough path $\underline{\bf X}$ is a Stratonovich Brownian rough path with associated covariance matrix $\mathrm{diag}(\gamma_1,\cdots,\gamma_d)$, with
$$
\gamma_i := 2\int_0^\infty\mathbb E\big[v_0^iv_t^i\big]\,dt.
$$
\end{lemma}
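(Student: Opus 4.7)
The plan is to split the identification of $\underline{\bfX}=(\underline B,\underline{\mathbb X})$ into three pieces: the symmetric part of $\underline{\mathbb X}$, the finite-dimensional law of $\underline B$, and the antisymmetric (Lévy area) part of $\underline{\mathbb X}$. Since finite-dimensional linear projections commute with iterated integrals, $\underline{\bfX}^\sigma$ is the canonical rough path above the $C^1$ path $\underline X^\sigma$, hence is weakly geometric:
$$
\mathrm{Sym}\bigl(\underline{\mathbb X}^\sigma_{ts}\bigr)=\tfrac12\,\underline X^\sigma_{ts}\otimes\underline X^\sigma_{ts}.
$$
Weak geometricity is closed in ${\sf RP}(\alpha^{-1})$, so $\mathrm{Sym}(\underline{\mathbb X}_{ts})=\tfrac12\,(\underline B_t-\underline B_s)^{\otimes 2}$, matching the symmetric part of a Stratonovich Brownian rough path over $\underline B$. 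The identification thus reduces to items (i) and (ii) below.

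For (i), Proposition \ref{prop.convergenceHolder} identifies $\underline B$ as a centered Gaussian martingale with covariance matrix $\bigl(C(e_i^*,e_j^*)\bigr)_{i,j\le d}$, whose diagonal entries equal $\gamma_i$ by definition. The off-diagonal entries vanish by a sign-flip symmetry: the orthogonal transformation $T_i:v\mapsto v-2(v,e_i)e_i$ preserves the product Gaussian $\gamma$ (diagonal in the basis $(e_n)$), the law of the noise $W$, the dynamics \eqref{def.vS}, and hence the invariant measure $\mu$. It follows that $\mathbb E[v_0^iv_t^j]=\mathbb E[(-v_0^i)v_t^j]=-\mathbb E[v_0^iv_t^j]$ for $j\ne i$, forcing this quantity to vanish and giving the announced diagonal covariance.

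For (ii), I apply the Dedecker--Merlev\`ede conditional CLT from \cite{DedeckerMerlevede}, exactly as in the proof of Proposition \ref{prop.convergenceHolder}, but to the joint vector functional gathering the level-one increments $\underline X^\sigma_{t_k-\varepsilon}-\underline X^\sigma_{t_{k-1}}$ together with the Lévy area increments
$$
\mathbb A^{\sigma,ij}_{t_k-\varepsilon,t_{k-1}}:=\tfrac12\bigl(\mathbb X^{\sigma,ij}_{t_k-\varepsilon,t_{k-1}}-\mathbb X^{\sigma,ji}_{t_k-\varepsilon,t_{k-1}}\bigr)
$$
over disjoint subintervals. Conditions (a)--(d) at the level-one components are already verified; at the level-two components they rely on the multi-scale martingale/coboundary decomposition developed in the proof of Proposition \ref{lem.kolmogorovRP}, combined with the exponential mixing of Corollary \ref{CorExpConvergence}. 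The resulting centered Gaussian limit, together with the weakly geometric constraint from the first step and the mean identity $\mathbb E[\mathbb X^{\sigma,ii}_{ts}]\to\tfrac12\gamma_i(t-s)$ (a direct stationarity-plus-mixing computation), identifies $\underline{\mathbb X}$ as the Stratonovich Brownian iterated integral over $\underline B$.

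The main obstacle is the level-two analogue of condition (b) in Dedecker--Merlev\`ede, i.e.\ the conditional second-moment control of $\iint v_r\otimes v_u\,dr\,du$, which is the same technical ingredient underlying Proposition \ref{lem.kolmogorovRP}. The fact that the limit is the Stratonovich rough path and not an area-corrected variant (compare the Friz--Gassiat--Lyons phenomenon mentioned in Section \ref{SubsectionRP}) rests on the vanishing of $\int_0^\infty\mathbb E[v_0^iv_t^j-v_0^jv_t^i]\,dt$ for $i\ne j$, itself immediate from the diagonal covariance established in (i).
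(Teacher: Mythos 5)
Your reduction to the antisymmetric part via weak geometricity, and your identification of $\underline B$ with diagonal covariance via the sign-flip symmetry $T_i:v\mapsto v-2(v,e_i)e_i$, are both sound; the symmetry argument is in fact a cleaner justification of the diagonality than the paper's appeal to \cite{PierreFiniteDim}. The gap is in step (ii). The Dedecker--Merlev\`ede theorem is a central limit theorem for normalized sums $n^{-1/2}\sum_k Y_k$ of a stationary sequence, and it produces Gaussian limits. The L\'evy area does not fit this scheme: in microscopic time $T=\sigma^4(t-s)$ it is the quadratic functional $T^{-1}\iint_{0\le r\le u\le T}v_r\otimes v_u\,dr\,du$, normalized by $T^{-1}$ rather than $T^{-1/2}$, and when you decompose it into unit blocks the dominant contribution is the off-diagonal double sum $\sum_{j<k}A_j\otimes A_k$, not a single stationary sum. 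More decisively, if your argument did deliver "a resulting centered Gaussian limit" for the pair (level-one increment, area increment), it would prove something false: the L\'evy area of the limiting Brownian motion is a second-chaos variable with a non-Gaussian law (L\'evy's stochastic area formula), so no CLT can identify it directly. Conditions (a)--(d) cannot simply be "verified at the level-two components"; the theorem is structurally inapplicable there.

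What the paper does instead is the following two-step argument, which you should compare with your plan. First, stationarity of ${\bf X}^\sigma$ plus the decay of the mixing coefficient (Corollary \ref{CorMixingCoefficient}) show that any limit point $(\underline{\bf X}_{t0})$ has stationary, independent increments in the step-$2$ nilpotent group $G_d^2$, hence is a continuous L\'evy process on $G_d^2$, characterized by its generator. Second, the generator is compared with that of the Stratonovich rough path over $\underline B$ by a second-order Taylor expansion of a test function in the area variable: the linear term vanishes in expectation because $\underline{\mathbb A}_t$ and $\underline{\mathbb A}^B_t$ are centered (this is where the vanishing of $\mathbb E[v_0^iv_t^j]$ for $i\ne j$, which your symmetry gives, is used), and the quadratic remainder is $O(t^2)$ by the uniform bound $\|\underline{\mathbb A}_t\|_{L^2}\vee\|\underline{\mathbb A}^B_t\|_{L^2}\lesssim t$ from Proposition \ref{lem.kolmogorovRP}. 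This exploits exactly the two ingredients you correctly isolate (centering of the area and the $L^2$ estimate of Proposition \ref{lem.kolmogorovRP}), but through a generator comparison for group-valued L\'evy processes rather than through a CLT; some such device is needed precisely because the target law at level two is not Gaussian.
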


\begin{proof}
Let $G_d^{2}$ stand for the step-$2$ nilpotent Lie group over $\mathbb{R}^d$. We prove that the process $(\underline{{\bf X}}_{t0})_{0\leq t\leq 1}$ is a $G_d^{2}$-valued Brownian motion by showing that it has stationary, independent, increments. The stationarity is inherited from the stationarity of the ${\bf X}^\sigma$. The independence of the increments of $\underline{\bf X}$ on disjoint closed intervals is a consequence of Corollary \ref{CorMixingCoefficient} on the convergence to $0$ of the mixing coefficient of $(v_t)$. Continuity of $\underline{\bf X}$ allows to extend the result to adjacent time intervals.

\medskip

We identify the generator of the $G_d^2$-valued Brownian motion $(\underline{\bf X}_t)$ as the generator of the $d$-dimensional Stratonovich Brownian rough path following the method of \cite{PierreFiniteDim}. We recall the details for the reader's convenience. Note that we only need to consider the joint dynamics of $\underline{B}_t$ and the antisymmetric part $(\underline{\mathbb{A}}_t)$ of $(\underline{\mathbb{X}}_t)$; the former takes values in the Lie algebra $\frak{g}_d^2$ of $G_d^2$ -- a linear space. Denote by $\underline{\mathbb{A}}^B$ the antisymmetric part of Stratonovich Brownian rough path associated with $\underline{B}$. We then have, for any smooth real-valued function $f$ on $\mathbb{R}^d\times\frak{g}_d^2$ with compact support, the identity
\begin{equation*}\begin{split}
&\Big(f\big(\underline{B}_t, \underline{\mathbb{A}}_t\big) - f(0)\Big) - \Big(f\big(\underline{B}_t, \underline{\mathbb{A}}^B_t\big) - f(0)\Big)   \\
&= (\partial_2f)(\underline{B}_t, 0)\big(\underline{\mathbb{A}}_t-\underline{\mathbb{A}}^B_t\big) + O\Big(\big|\underline{\mathbb{A}}_t-\underline{\mathbb{A}}^B_t\big|^2\Big)   \\
&= \Big((\partial_2f)(\underline{B}_t, 0) - (\partial_2f)(0, 0) \Big)\big(\underline{\mathbb{A}}_t-\underline{\mathbb{A}}^B_t\big) +  (\partial_2f)(0, 0)\big(\underline{\mathbb{A}}_t-\underline{\mathbb{A}}^B_t\big)   \\
&\quad+ O\Big(\big|\underline{\mathbb{A}}_t-\underline{\mathbb{A}}^B_t\big|^2\Big).
\end{split}\end{equation*}
The conclusion follows by multiplying by $t^{-1}$ and taking expectation, sending $t$ to $0$, after recalling that $\underline{\mathbb{A}}_t$ and $\underline{\mathbb{A}}^B_t$ are centered, and recalling the uniform estimates from Proposition \ref{lem.kolmogorovRP} under the form
$$
\big\|\underline{\mathbb{A}}_t\big\|_{L^2} \vee \big\|\underline{\mathbb{A}}^B_t \big\|_{L^2} \lesssim t.
$$
\end{proof}

\section{Geometry of the configuration space}
\label{SectionGeometry}

\subsection{Configuration space}
\label{SubsectionConfigurationSpace}

Let $(M,g)$ be a $d$-dimensional connected and oriented Riemannian manifold, and $\pi : F\rightarrow M$ a finite dimensional fiber bundle over $M$, with vertical bundle $VF\rightarrow M$. Think of the trivial bundles $M\times M\rightarrow M$, or $M\times TM\rightarrow M$, as typical examples. We collect from Palais' seminal work \cite{Palais} elementary results on the Hilbert manifold $H^s(F)$ of sections of $\pi$ with Sobolev regularity exponent $s>\frac{d}{2}$. 

\begin{enumerate}
   \item \textbf{Sobolev embedings} hold true, with in particular $H^s(F)\subset C^k(M,F)$, if $s>k+\frac{d}{2}$ and $k\geq 0$.   \vspace{0.1cm}
   
   \item {\bf Variations of $H^s$-sections of $F$.} The spaces $TH^s(F)$ and $H^s(VF)$ are isomorphic as Hilbert manifolds. This isomorphism accounts for the fact that an infinitesimal perturbation $(\delta f)$ of a section $f$ of $F$, reads as a collection of vertical tangent vectors $(\delta f)(x) \in V_{f(x)}F$, indexed by $x\in M$. As a particular example, for any finite dimensional manifold $N$, the spaces $TH^s(M,N)$ and $H^s(M,TN)$ are isomorphic.   \vspace{0.1cm}
   
   \item For any two finite dimensional fiber bundles $F,G$ above $M$, the map 
   $$
   (f,g) \mapsto \big(x\in M \mapsto (f(x),g(x))\big)   
   $$
   is an isomorphism between $H^s(F)\times H^s(G)$ and $H^s(F\times_M G)$.   \vspace{0.1cm}

   \item \textbf{Omega lemma.} Given a smooth fiber bundle morphism $\Phi : F\rightarrow G$, above $M$, set 
   $$
   \omega_\Phi(f) := \Phi\circ f, 
   $$
   for any section $f$ of $F$. Then $\omega_\Phi$ sends $H^s(F)$ in $H^s(G)$, and $d\omega_\Phi : TH^s(F)\rightarrow TH^s(G)$ is isomorphic to $\omega_{d\Phi} : H^s(VF)\rightarrow H^s(VG)$, via the isomorphisms $TH^s(F)\simeq H^s(VF)$ and $TH^s(F')\simeq H^s(VG)$.      
\end{enumerate}

\medskip

For $s>\frac{d}{2}$, set 
$$
\mathscr{M} := H^s(M,M);
$$
this will be the \textbf{configuration space} of our dynamics. Choosing $s>\frac{d}{2}$, ensures that $\mathscr{M}\subset C^0(M,M)$, by Sobolev embedings. The tangent space to this Hilbert manifold is given by
$$
T\mathscr{M} \simeq H^s(M,TM),
$$
from item 2 above. If $s>\frac{d}{2}+1$, elements of $\mathscr{M}$ are $C^1$ maps from $M$ into itself. Recall in that case from Section 4 of \cite{EbinMarsden} that the subset $\mathscr{M}_0$ of $\mathscr{M}$ of $H^s$ maps from $M$ into itself that preserve the volume form by pull-back is then a closed submanifold of $\mathscr{M}$, and that elements of $\mathscr{M}_0$ are diffeomorphisms. So $\mathscr{M}_0$ is a group. We shall always assume implicitly these constraints on the regularity exponent $s$, when talking about $\mathscr{M}$ or $\mathscr{M}_0$. We recall other elementary facts on $H^s(TM)$ at the end of this section.

To implement a version of Cartan's development machinery in the weak Riemannian setting of the next section, we introduce the following finite dimensional fiber bundles above $M$, seen below as the first component. Given $x,y \in M$, denote by ${\sf O}(T_xM,T_yM)$ the set of isometries from $T_xM$ to $T_yM$. Set
\begin{equation*}\begin{split}
&F^{(e)} := \Big\{(x,y;e)\,;\,(x,y)\in M\times M, e\in {\sf O}(T_xM,T_yM)\Big\},   \\
&F^{(w)} := \Big\{(x,y;w)\,;\,(x,y)\in M\times M, w\in T_xM\Big\},   \\
&F^{(v)} := \Big\{(x,y;v)\,;\,(x,y)\in M\times M, v\in T_yM\Big\},   \\
&F^{(e,v)} := \Big\{\big(x,y;e,v\big)\,;\,(x,y)\in M\times M,  e\in {\sf O}(T_xM,T_yM), v\in T_yM\Big\}.
\end{split}\end{equation*}   \vspace{-0.4cm}

\begin{figure}[ht]\par
\scalebox{.6}{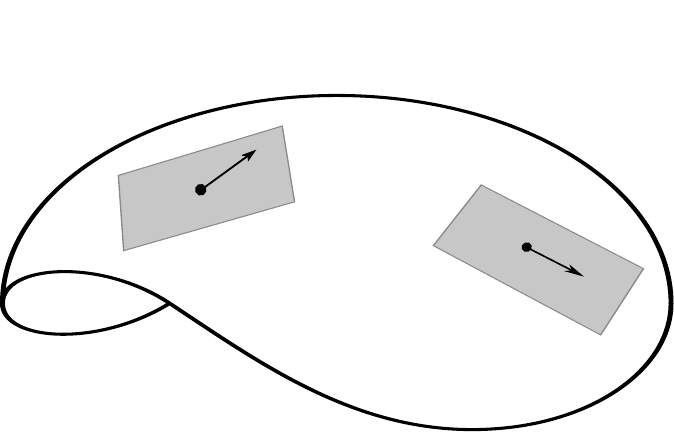}
\vspace{-0.3cm}
\caption{An infinitesimal rigid object $x$ is moving along a path. It has position $y$ and velocity $v$ at some time. Its orientation at that time is given by an isometry $e : T_xM\rightarrow T_yM$, and its velocity $v$ is given in its initial reference frame by $w$.}
\label{fig.fibrebundles}
\end{figure}

We understand $H^s(F^{(v)})$ as the set of $H^s$ maps from $M$ into $TM$, so $T\mathscr{M}\simeq H^s\big(F^{(v)}\big)$. We denote by $\big(\varphi(\cdot),v(\cdot)\big)$ a generic element of $H^s(F^{(v)})$. We have similar interpretations of the other $H^s$ spaces over the corresponding bundles, with similar notations. Since the map
\begin{equation*}\begin{split}
F^{(e)}\times_{M\times M}F^{(w)} &\rightarrow F^{(v)}   \\
\big((x,y ; e),(x,y;w)\big) &\mapsto \big(x,y;e(w)\big),
\end{split}\end{equation*}
is a smooth bundle morphism, it follows from items 3 and 4 above, that it induces a \textit{smooth map} from $H^s(F^{(w,e)})$ into $H^s(F^{(v)})$. Similarly, the smooth map 
\begin{equation*}\begin{split}
F^{(e,v)} &\rightarrow F^{(w)}  \\
\big(x,y ; e,v\big) &\mapsto \big(x,y;e^{-1}(v)\big),
\end{split}\end{equation*}
induces a \textit{smooth map} from $H^s(F^{(e,v)})$ into $H^s(F^{(w)})$.

\medskip

We refer the reader to the classic textbook \cite{Rosenberg} for the following elementary facts from functional analysis about the Laplace operator $\Delta$ on vector fields on $M$. We take the convention that $-\Delta$ is a non-positive symmetric operator on $L^2(TM)$. This operator has compact resolvant, so one has an eigenspaces decomposition
\begin{equation}\label{EqEigenspaceDecomposition}
L^2(TM) = \bigoplus_{n\geq 0} E_{\lambda_n},
\end{equation}
with finite dimensional eigenspaces $E_{\lambda_n}$, with corresponding non-positive eigenvalues $\lambda_n\downarrow -\infty$. Eigenvectors of $-\Delta$ are smooth, from elliptic regularity results. We recover the space $H^s(TM)$ described above setting
$$
H^s(TM) = \left\{f=\sum_{n\geq 0}f_n \in L^2(T\M)\,;\,\sum_{n\geq 0}\lambda_n^s \|f_n\|_{L^2}^2<\infty\right\}.
$$
The $0$-eigenspace is finite dimensional. Any choice of Euclidean norm $\|\cdot\|$ on it defines the topology of $H^s(TM)$, associated with the norm
$$
\|f\|_s := \|f_0\| + \left(\sum_{n\geq 0}\lambda_n^s \|f_n\|_{L^2}^2\right)^{1/2}.
$$

\bigskip

\subsection{Weak Riemannian structure on the configuration space}
\label{SubsectionRiemannianStructure}

Denote by $\textsc{Vol}$ the Riemannian volume measure on $(M,g)$, and by $\exp : TM\rightarrow M$, its exponential map. The configuration space $\mathscr{M}$ is endowed with a smooth weak Riemannian structure, setting for any $\varphi\in \mathscr{M}$ and $X(\varphi),Y(\varphi)\in T_\varphi\mathscr{M}$,
\begin{equation}\label{EqL2Metric}
\big(X(\varphi),Y(\varphi)\big)_\varphi := \int_M g_{\varphi(m)}\big(X(\varphi)(m),Y(\varphi)(m)\big)\,\textsc{Vol}(dm).
\end{equation}
This formula defines by restriction a weak Riemannian metric on the space $\mathscr{M}_0$ of $H^s$ maps from $M$ into itself preserving the volume form. In that setting, notice that if $X(\varphi) = \textbf{X}\circ\varphi$ and $Y(\varphi) = \textbf{Y}\circ\varphi$, for some vector fields $\textbf{X},\textbf{Y}$ on $M$, then the change of variable formula gives 
$$
\big(X(\varphi),Y(\varphi)\big)_\varphi = \int_M g_m\big(\textbf{X}(m),\textbf{Y}(m)\big)\,\textsc{Vol}(dm),
$$
so the scalar product is in that case the $L^2$ scalar product of the vector fields $\textbf{X}$ and $\textbf{Y}$. The fact that the topology on $\mathscr{M}$ induced by the scalar product is weaker than the $H^s$-topology makes non-obvious the existence of a smooth Levi-Civita connection. Ebin and Marsden have proved that 
\begin{itemize}
   \item the $L^2$ metric \eqref{EqL2Metric} is a smooth function on $\mathscr{M}$,   \vspace{0.1cm}
   
   \item it has a smooth Levi-Civita connection $\overline\nabla$, with associated exponential map $\textrm{Exp}$ well-defined and smooth in a neighbourhood of the zero section; it is explicitly given by 
   $$
   \textrm{Exp}_\varphi(X)(m) = \textrm{exp}_{\varphi(m)}\big(X(m)\big).
   $$
\end{itemize}
The geodesics of $(\mathscr{M},\overline{\nabla})$ are defined for all times. Denote by $\nabla$ the Levi-Civita connection of $(M,g)$. For smooth right invariant vector fields $X, Y$ on $\mathscr{M}$, with $X(\varphi) = \textbf{X}\circ\varphi$ and $Y(\varphi) = \textbf{Y}\circ\varphi$, one has
$$
(\overline{\nabla}_XY)(\varphi) = (\nabla_{\textbf{X}}\textbf{Y})\circ\varphi.
$$
The $L^2$-scalar product is right invariant on the group $\mathscr{M}_0$, from the change of variable formula. The Levi-Civita connection of the $L^2$ metric on the volume preserving configuration space $\mathscr{M}_0$ is explicitly given in terms of the Hodge projection operator $P$ on divergence-free vector fields on $M$. Denote by $R_\varphi$ the right composition by $\varphi$. For any $\varphi\in\mathscr{M}_0$, the map 
\begin{equation}\label{EqHodgeProjector}
P_\varphi := dR_\varphi\circ P\circ dR_\varphi^{-1},
\end{equation}
is indeed the orthogonal projection map from $T_\varphi\mathscr{M}$ into $T_\varphi\mathscr{M}_0$, and its depends smoothly on $\varphi\in\mathscr{M}_0$. So the Levi-Civita connection $\overline{\nabla}^0$ on $\mathscr{M}_0$ is given by 
$$
\overline{\nabla}^0 = P\circ \overline{\nabla};
$$
it is a smooth map. Its associated exponential map is no longer given by the exponential map on $TM$, due to the non-local volume preserving constraint. Geodesics are not defined for all times anymore. Denote by Id the identity map on $M$. For smooth right invariant vector fields $X, Y$ on $\mathscr{M}$, with $X(\varphi) = \textbf{X}\circ\varphi$ and $Y(\varphi) = \textbf{Y}\circ\varphi$, for vector fields $\textbf{X},\textbf{Y}$ on $M$, one has
$$
\big(\overline{\nabla}_X^0Y\big)(\textrm{Id}) = P\big(\nabla_{\textbf{X}}\textbf{Y}\big).
$$
V.I. Arnol'd showed formally in his seminal work \cite{Arnold} that the velocity field $u : [0,T]\rightarrow H^s(TM)$ of a geodesic $\varphi_t$ in $\mathscr{M}_0$, with $u_t := \dot \varphi_t\circ\varphi^{-1}_t$, is a solution to Euler's equation for the hydrodynamics of an incompressible fluid. Ebin and Marsden gave an analytical proof of that fact in their seminal work \cite{EbinMarsden}. (Besides that classical reference, we refere the reader to Arnold and Khesin's book \cite{ArnoldKhesin}, or Smolentsev's thourough review \cite{Smolentsev} for reference works on the weak Riemannian geometry of the configuration space.)

\smallskip

The flat two-dimensional torus ${\bf T}^2$ offers an interesting concrete example. Its symplectic structure allows to identify a Hilbert basis $(A_k,B_k)_{k\in\mathbb{Z}^2\backslash{0}}$ of $T_{\textrm{Id}}\mathscr{M}_0$ from an eigenbasis for the Laplace operator on real-valued functions on $\textbf{T}^2$; see e.g. Arnold and Khesin's book \cite{ArnoldKhesin}, Section 7 of Chap. 1. Denote by $\partial_1, \partial_2$ the constant vector fields in the coordinate directions, and $k=(k_1,k_2)\in\mathbb{Z}^2$. One has 
\begin{equation*}\begin{split}
A_k &= \vert k\vert^{-1}\Big(k_2\cos(k\cdot\theta)\partial_1 - k_1\cos(k\cdot\theta)\partial_2\Big),   \\
B_k &= \vert k\vert^{-1}\Big(k_2\sin(k\cdot\theta)\partial_1 - k_1\sin(k\cdot\theta)\partial_2\Big).
\end{split}\end{equation*}
One can see in the following simulations the image of axis circles by the time $1$ map of the associated flow in ${\bf T}^2$, corresponding to different inital conditions for $u_0$, with $\varphi_0=\textrm{Id}$. The simulations were done using an elementary finite dimensional approximation for the dynamics, using the explicit expressions for the Christoffel symbols first given by Arnold in \cite{Arnold}. 
\begin{figure}[ht]
\centering
\begin{center}\begin{minipage}{12cm}
\begin{minipage}{4cm}\includegraphics[scale=0.2]{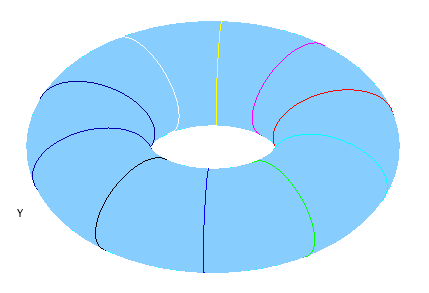}\end{minipage}\hspace{-0.1cm}
\begin{minipage}{4cm}\includegraphics[scale=0.2]{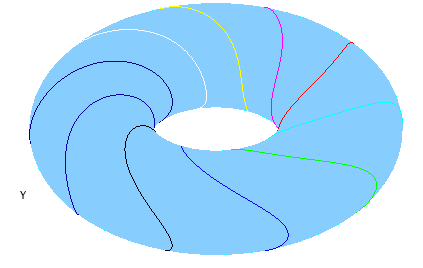}\end{minipage}\hspace{-0.3cm}
\begin{minipage}{4cm}\includegraphics[scale=0.2]{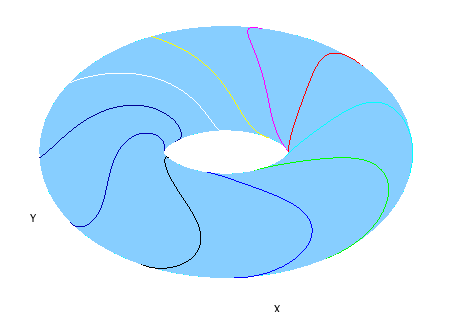}\end{minipage}
\end{minipage}\end{center}
\caption{Time $1$ snapshots of the geodesic flow, for different initial momenta in the volume preserving diffeomorphism group.}
\end{figure}
We come back to this point in Section \ref{SubsectionCartan}.

\bigskip

\subsection{Parallel transport}
\label{SubsectionParallelTransport}

We recast in this section the parallel transport operations in $\mathscr{M}$ and $\mathscr{M}_0$, using the bundles $F$ from Section \ref{SubsectionConfigurationSpace}. This allows to set the notations for the next section on Cartan development operation in $\mathscr{M}$ and $\mathscr{M}_0$. Recall $H^s(F)$ stands for $H^s$ sections from $M$ into the corresponding bundle $F$. We denote by $VF$ the vertical space in $TF$, for the canonical projection map $F\rightarrow M$. Recall also that $T_\textrm{Id}\mathscr{M}$ is simply the set of $H^s$ vector fields on $M$.

\smallskip

Denote by $K : TTM \rightarrow TM$, the connector associated with the Levi-Civita connection $\nabla$ on $M$. So, for a path $\gamma_t = (m_t,v_t)$ in $TM$, one has 
$$
\nabla_{\dot m_t}v_t = K(\dot \gamma_t),
$$
and 
$$
\nabla_\textbf{X}\textbf{Y} = K\big((d\textbf{Y})(\textbf{X})\big),
$$
for any smooth vector fields $\textbf{X},\textbf{Y}$ on $M$. The second order tangent bundle $TT\mathscr{M}$ of $\mathscr{M}$ identifies with $H^s(M, TTM)$. The connector $\overline{K}$ associated with the $L^2$-Levi-Civita connection $\overline{\nabla}$ is given, for a section $Y$ of $TTM$ over an element of $\mathscr{M}$, by 
$$
\overline{K}(Y) := K\circ Y \in T\mathscr{M}.
$$
Denote by $V_2F^{(v)}$ the vertical space in $TF^{(v)}$ for the canonical projection map 
$$
p_2 : F^{(v)}\rightarrow M\times M.
$$ 
One defines a smooth one form on $V_2F^{(v)}$, with values in $TF^{(v)}$, by requiring that $\nabla_{\dot y_t}v_t=0$ iff 
$$
\frac{d}{dt}\,(y_t,v_t) = \frak{H}^{(v)}(y_t,v_t;\dot y_t).
$$
We choose the letter $\frak{H}$, for this horizontal lift of the connection. In simple terms, for any fixed $(y,v)\in TM$, the linear map $\frak{H}^{(v)}(y,v;\cdot)$ identifies the space $T_yM$ to the horizontal subspace of $T_{(y,v)}TM$, via the usual horizontal lift. Note that the definition of $\frak{H}^{(v)}(y,v ; \dot y)$ does not depend on the base point $x\in M$, for a generic element $(x,y ; v)\in F^{(v)}$ and $\dot y\in T_yM$. 

Denote also by $ \frak{H}^{(e)}$ the smooth one form on $V_2F^{(v)}$ with values in the space of vector field on $F^{(e)}$, such that for any path $(x,y_t;e_t)$ in $F^{(e)}$, and any vector $w\in T_xM$, the vector $e_t(w)\in T_{y_t}M$ is transported parallely along the $M$-valued path $(y_t)$ iff
$$
\frac{d}{dt}\,(y_t,e_t) =  \frak{H}^{(e)}(y_t,e_t;\dot y_t).
$$
Here again, the base point $x\in M$ is not involved in the definition of the tangent vector $\frak{H}^{(e)}(y,e;\dot y)$, for a generic element $(x,y ; e)\in F^{(e)}$ and $\dot y\in T_yM$. Pick 
$$
(x_0,y_0;e_0)\in F^{(e)},
$$
and note that for any vertical vector 
$$
(\dot y,\dot e)\in V_{(x_0,y_0 ; e_0)} F^{(e)},
$$ 
and $v_0\in T_{y_0}M$, one has
$$
(\dot y,\dot e) =  \frak{H}^{(e)}\big(y_0,e_0 ; v_0\big)
$$
iff 
$$
\big(\dot y, \dot e(w)\big) = \frak{H}^{(v)}\big(y_0,e_0(w);v_0\big) \in V_{(x_0,y_0; e_0(w))}F^{(v)},
$$
for any $w\in T_yM$, with $\dot e(w)$ defined naturally. It follows from the Omega Lemma that one defines a \textit{smooth vector field} $\overline{\frak{H}}^{(v)}$ on $H^s(F^{(v)})$, setting
$$
\overline{\frak{H}}^{(v)}\big(\varphi(\cdot),v(\cdot)\big) := \frak{H}^{(v)}\circ\big(\varphi(\cdot),v(\cdot);v(\cdot)\big).
$$
(Note that while $\frak{H}^{(v)}$ is a one form with values in vector fields, $\overline{\frak{H}}^{(v)}$ is indeed a vector field.) Similarly, we define a \textit{smooth one-form} on $T_\textrm{Id}\mathscr{M}$ with values in vector fields on $H^s(F^{(e)})$, setting
$$
\overline{\frak{H}}^e\big(\varphi(\cdot),e(\cdot) ; {\bf X}\big) :=  \frak{H}^{(e)}\circ\big(\varphi(\cdot),e(\cdot); e({\bf X})\big), \qquad {\bf X}\in T_\textrm{Id}\mathscr{M}.
$$

\begin{proposition}   \label{PropCartanM0Argument}
Given a path $\big(\varphi_t(\cdot); e_t(\cdot),v_t(\cdot)\big)_{0\leq t\leq 1}$ in $H^s(F^{(e,v)})$, one has pointwise
$$
\frac{d}{dt}\,\big(\varphi_t(x),e_t(x)\big) =  \frak{H}^{(e)}\big(\varphi_t(x),e_t(x);v_t(x)\big),
$$
for all $x\in M$, iff
$$
\frac{d}{dt}\,\big(\varphi_t,e_t({\bf X})\big) = \overline{\frak{H}}^{(v)}\big(\varphi_t,e_t({\bf X});v_t\big),
$$
for every ${\bf X}\in T_\textrm{\emph{Id}}\mathscr{M}$.
\end{proposition}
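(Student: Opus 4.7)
The plan is to reduce the equivalence to two ingredients: the pointwise, fibrewise equivalence of the two horizontal-lift formalisms recalled in the paragraph immediately preceding the statement, and the Omega-lemma identification $TH^s(F^{(v)}) \simeq H^s(VF^{(v)})$ from item 2 of the list in Section \ref{SubsectionConfigurationSpace}. The content of the proposition really is that a pointwise-in-$x$ ODE on the finite-dimensional bundle $F^{(e)}$ is equivalent, once a test vector field $\mathbf X$ is allowed to vary, to a sectional ODE in the Hilbert manifold $H^s(F^{(v)})$; no new geometric information enters the argument.

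First, I would recall the elementary pointwise fact stated just before the proposition: for $(x_0,y_0;e_0) \in F^{(e)}$, a vertical $(\dot y,\dot e)\in V_{(x_0,y_0;e_0)}F^{(e)}$, and $v_0 \in T_{y_0}M$, one has $(\dot y,\dot e) = \frak H^{(e)}(y_0,e_0;v_0)$ if and only if $(\dot y,\dot e(w)) = \frak H^{(v)}(y_0,e_0(w);v_0)$ for every $w \in T_{x_0}M$. Second, I would observe that the composition map $\big((\varphi,e),\mathbf X\big) \mapsto (\varphi,e(\mathbf X))$ from $H^s(F^{(e)}) \times T_{\mathrm{Id}}\mathscr M$ into $H^s(F^{(v)})$ is smooth, as it is induced by the smooth fibre bundle morphism $F^{(e)}\times_M TM \to F^{(v)}$, $(x,y;e,w)\mapsto (x,y;e(w))$, together with items 3 and 4 of Section \ref{SubsectionConfigurationSpace}. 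Under the Omega identification, the velocity of the $C^1$ path $t \mapsto (\varphi_t,e_t(\mathbf X))$ in $H^s(F^{(v)})$ corresponds pointwise to the section $x \mapsto \frac{d}{dt}\big(\varphi_t(x),e_t(x)(\mathbf X(x))\big)$, so time differentiation commutes with evaluation at $x \in M$. By definition, the right-hand side $\overline{\frak H}^{(v)}(\varphi_t,e_t(\mathbf X);v_t)$ is also the section $x \mapsto \frak H^{(v)}\big(\varphi_t(x),e_t(x)(\mathbf X(x));v_t(x)\big)$.

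With these two ingredients in hand the equivalence becomes immediate. For the forward implication, apply the pointwise fact at each $x \in M$ with $w := \mathbf X(x)$ to convert the $\frak H^{(e)}$-identity into the pointwise $\frak H^{(v)}$-identity for the evaluation at $x$ of the section $e_t(\mathbf X)$, then reassemble the pointwise identities into the sectional equation via the Omega-lemma identification. For the converse, fix $x_0 \in M$ and $w \in T_{x_0}M$, pick any $\mathbf X \in T_{\mathrm{Id}}\mathscr M$ with $\mathbf X(x_0) = w$ (smooth compactly supported vector fields already lie in $H^s(TM)$), evaluate the sectional equation at $x_0$, and invoke the pointwise fact in the other direction as $w$ ranges over $T_{x_0}M$. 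I expect the only mildly non-trivial point, hence the main obstacle, to be the bookkeeping that identifies the time derivative in the infinite-dimensional $H^s(F^{(v)})$ with the pointwise time derivative; once this consequence of the Omega lemma and of $TH^s(F^{(v)}) \simeq H^s(VF^{(v)})$ is pinned down carefully, the remainder of the argument is tautological.
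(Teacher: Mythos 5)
Your argument is correct and is exactly the reasoning the paper intends: the paper states this proposition without any explicit proof, presenting it as an immediate consequence of the pointwise equivalence between $\frak{H}^{(e)}(y_0,e_0;v_0)$ and the family $\frak{H}^{(v)}(y_0,e_0(w);v_0)$, $w\in T_{x_0}M$, recalled in the paragraph just before the statement, combined with the Omega-lemma identification $TH^s(F^{(v)})\simeq H^s(VF^{(v)})$ that lets the time derivative commute with evaluation at $x\in M$. Your write-up simply makes that implicit proof explicit (including the correct reading of $w\in T_{x_0}M$ and the choice of a test field $\mathbf X$ with $\mathbf X(x_0)=w$ for the converse), so there is nothing to add.
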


The next two propositions give a description of parallel transport in $\mathscr{M}$ and $\mathscr{M}_0$, respectively, in terms of the vector field $\overline{\frak{H}}^{(v)}$ on $H^s(F^{(v)})$.

\begin{proposition}
\label{PropParallelTransportM}
Let $\big(\varphi_t(\cdot),v_t(\cdot)\big)_{0\leq t\leq 1}$ be a $T\mathscr{M}$-valued path. Then 
$$
\overline{\nabla}_{\dot \varphi_t}v_t = 0,
$$
iff
$$
\frac{d}{dt}\big(\varphi_t,v_t\big) = \overline{\frak{H}}^{(v)}\big(\varphi_t, v_t;\dot \varphi_t\big).
$$
\end{proposition}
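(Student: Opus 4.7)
The plan is to reduce the statement to its pointwise version on $M$ by exploiting the fact, recalled in Section \ref{SubsectionRiemannianStructure}, that the connector $\overline{K}$ of $\overline{\nabla}$ acts pointwise as $\overline{K}(Y) = K\circ Y$, where $K:TTM\to TM$ is the connector of the Levi-Civita connection $\nabla$ on $M$.

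First I would unfold the definition of $\overline{\nabla}$ along the path. The curve $t\mapsto(\varphi_t,v_t)$ takes values in $T\mathscr{M}\simeq H^s(F^{(v)})$, so its time derivative $\frac{d}{dt}(\varphi_t,v_t)$ is an element of $T(T\mathscr{M})$ over $(\varphi_t,v_t)$; under the identification from item~2 of Section~\ref{SubsectionConfigurationSpace}, it is simply the $H^s$ section $x\mapsto\frac{d}{dt}(\varphi_t(x),v_t(x))\in T_{v_t(x)}TM$. The definition of a connector then gives
\[
\overline{\nabla}_{\dot\varphi_t} v_t \;=\; \overline{K}\!\left(\tfrac{d}{dt}(\varphi_t,v_t)\right) \;=\; K\circ \tfrac{d}{dt}(\varphi_t,v_t),
\]
so that $\overline{\nabla}_{\dot\varphi_t}v_t=0$ holds as an element of $H^s(TM)$ if and only if, for every $x\in M$,
\[
K\!\left(\tfrac{d}{dt}(\varphi_t(x),v_t(x))\right) \;=\; \nabla_{\dot\varphi_t(x)} v_t(x) \;=\; 0
\]
in $T_{\varphi_t(x)}M$.

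The second step is to recognise this pointwise equation as the parallel transport ODE on $M$. By the very definition of $\frak{H}^{(v)}$ as the horizontal lift one-form on $V_2F^{(v)}$, the vanishing $\nabla_{\dot\varphi_t(x)} v_t(x)=0$ is equivalent, for each fixed $x$, to
\[
\tfrac{d}{dt}\bigl(\varphi_t(x),v_t(x)\bigr) \;=\; \frak{H}^{(v)}\bigl(\varphi_t(x),v_t(x);\dot\varphi_t(x)\bigr).
\]
Reading this identity as a section-valued equation over $M$, and applying the $H^s$-lift of the smooth bundle morphism $\frak{H}^{(v)}$ via the Omega lemma (item~4 of Section~\ref{SubsectionConfigurationSpace}), we obtain
\[
\tfrac{d}{dt}(\varphi_t,v_t) \;=\; \overline{\frak{H}}^{(v)}(\varphi_t,v_t;\dot\varphi_t),
\]
with the one-form interpretation of $\overline{\frak{H}}^{(v)}$ used here (i.e.\ plugging $\dot\varphi_t$, rather than $v_t$ itself, into the third slot of $\frak{H}^{(v)}$ pointwise). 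Combining the two equivalences yields the proposition.

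The only real point requiring care is the bookkeeping between the infinite-dimensional expressions on $\mathscr{M}$ and their pointwise analogues on $M$: one must check that $\overline{K}=K\circ(\cdot)$ genuinely turns the connector equation into a pointwise statement, and that the resulting pointwise ODE reassembles to an $H^s$-valued ODE, which is precisely what the Omega lemma guarantees. Neither step is a serious obstacle; there is no analytic subtlety beyond the smoothness of the horizontal-lift one-form and the already-established description of $\overline{K}$.
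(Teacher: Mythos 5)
Your argument is correct and is essentially the paper's own proof: both reduce the statement to the pointwise parallel-transport condition on $M$ via the pointwise action $\overline{K}=K\circ(\cdot)$ of the connector, and then reassemble the pointwise ODE into the $H^s$-valued one. The only cosmetic difference is that the paper makes the horizontal/vertical splitting of $\frac{d}{dt}(\varphi_t,v_t)$ explicit and concludes by injectivity of the vertical lift $\frak{V}^{(v)}$, whereas you invoke the defining property of $\frak{H}^{(v)}$ directly; your remark about the one-form versus vector-field reading of $\overline{\frak{H}}^{(v)}$ is the right bookkeeping point.
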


\begin{proof}
Given $(y,v)\in TM$, the following map identifies $T_yM$ with the vertical subspace of $T_{(y,v)}TM$
$$
\frak{V}^{(v)}(y,v;\cdot) : w\in T_yM\mapsto\frac{d}{dt}_{\big| t=0} (v+tw)\in T_{(y,v)}TM.
$$
For any $(x,y;v)\in F^{(v)}$ and $u\in T_{(y,v)}\big(F^{(v)}_x\big)$, one then has
$$
u = \frak{H}^{(v)}(y,v ; a) + \frak{V}^{(v)}(y,v ; b) \quad \textrm{ iff } \quad a = dp_2(u), \textrm{ and } b = K(u).
$$
For an $H^s(F_v)$-valued path $\big(\varphi_t(\cdot),v_t(\cdot)\big)$, one then has the splitting
\begin{equation}\label{EqDerivativeSplitting}
\begin{split}
\frac{d}{dt}\,(\varphi_t,v_t)
& = \frak{V}^{(v)}\circ\big(\varphi_t,v_t;K\circ\dot v_t)\big) + \frak{H}^{(v)}\circ\big(\varphi_t,v_t;\dot\varphi_t\big) \\
& = \frak{V}^{(v)}\circ\big(\varphi_t,v_t;\Nabla_{\dot\varphi_t}v_t)\big) + \overline{\frak{H}}^{(v)}\circ\big(\varphi_t,v_t;\dot\varphi_t\big).  
\end{split}
\end{equation}
The result follows because composition by $\frak V_v(y,v;\cdot)$ is one-to-one.
\end{proof}

Recall that $P$ stands for Hodge projector on divergence-free vector fields.

\begin{proposition}
\label{PropParallelTransportM0}
Let $\big(\varphi_t(\cdot),v_t(\cdot)\big)_{0\leq t\leq 1}$ be a $T\mathscr{M}_0$-valued path. Then
$$
\overline{\nabla}^0_{\dot \varphi_t}v_t = 0,
$$
iff
$$
\frac{d}{dt}\big(\varphi_t,v_t\big) = (dP)\Big(\overline{\frak{H}}^{(v)}\big(\varphi_t,v_t,;\dot \varphi_t\big)\Big).
$$
\end{proposition}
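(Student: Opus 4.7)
My plan is to mirror the proof of Proposition~\ref{PropParallelTransportM} step by step, while tracking carefully how the differential of the Hodge projection $P$ interacts with the vertical/horizontal splitting of tangent vectors in $T(T\mathscr{M})$.

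The first step is to apply the identity \eqref{EqDerivativeSplitting} from the proof of Proposition~\ref{PropParallelTransportM} to the $T\mathscr{M}_0$-valued path $\big(\varphi_t(\cdot), v_t(\cdot)\big)$, viewed as a $T\mathscr{M}$-valued path:
\begin{equation*}
\frac{d}{dt}(\varphi_t, v_t) = \frak{V}^{(v)}\circ\big(\varphi_t, v_t; \overline{\nabla}_{\dot\varphi_t} v_t\big) + \overline{\frak{H}}^{(v)}\circ\big(\varphi_t, v_t; \dot\varphi_t\big).
\end{equation*}
The second step is to apply $dP$ to both sides. For the left-hand side, since $(\varphi_t, v_t)$ stays in $T\mathscr{M}_0$ and $P$ restricted to $T\mathscr{M}_0$ is the identity (as the fiberwise orthogonal projection onto $T_\varphi\mathscr{M}_0$ defined in \eqref{EqHodgeProjector} is idempotent), differentiating the relation $P(\varphi_t, v_t) = (\varphi_t, v_t)$ along $t$ yields $dP\big(\frac{d}{dt}(\varphi_t, v_t)\big) = \frac{d}{dt}(\varphi_t, v_t)$. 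For the vertical term on the right, the tangent vector $\frak{V}^{(v)}(\varphi_t, v_t; w)$ represents the infinitesimal variation $\frac{d}{ds}_{|s=0}(\varphi_t, v_t + sw)$; applying $P$ to this $s$-curve and using linearity of $P_{\varphi_t}$ together with $P_{\varphi_t}(v_t)=v_t$ yields $dP\big(\frak{V}^{(v)}(\varphi_t, v_t; w)\big) = \frak{V}^{(v)}\big(\varphi_t, v_t; P_{\varphi_t}(w)\big)$. Specializing to $w = \overline{\nabla}_{\dot\varphi_t} v_t$ and using $\overline{\nabla}^0 = P\circ\overline{\nabla}$ transforms the vertical term into $\frak{V}^{(v)}\circ(\varphi_t, v_t; \overline{\nabla}^0_{\dot\varphi_t} v_t)$.

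Putting the pieces together yields
\begin{equation*}
\frac{d}{dt}(\varphi_t, v_t) = \frak{V}^{(v)}\circ\big(\varphi_t, v_t; \overline{\nabla}^0_{\dot\varphi_t} v_t\big) + dP\big(\overline{\frak{H}}^{(v)}(\varphi_t, v_t; \dot\varphi_t)\big),
\end{equation*}
so the third step is immediate: since $\frak{V}^{(v)}(\varphi_t, v_t; \cdot)$ is injective, the vanishing of $\overline{\nabla}^0_{\dot\varphi_t} v_t$ is equivalent to the vanishing of the first summand, which is itself equivalent to the displayed formula in the statement. The main subtlety I expect is the clean bookkeeping of the action of $dP$ on purely vertical tangent vectors; in contrast the horizontal term $\overline{\frak{H}}^{(v)}$ does not simplify when $dP$ is applied to it, since the $\varphi$-dependence of $P_\varphi$ through the conjugation $dR_\varphi\circ P\circ dR_\varphi^{-1}$ is non-trivial, which accounts for the persistence of $dP$ in the final formula.
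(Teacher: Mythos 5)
Your proof is correct and follows essentially the same route as the paper's: both start from the splitting \eqref{EqDerivativeSplitting}, both use that $dP$ acts on vertical vectors as the fiberwise projection $P_{\varphi}$ (which is where $P_{\varphi_t}(v_t)=v_t$ matters), and your step of differentiating $P(\varphi_t,v_t)=(\varphi_t,v_t)$ is exactly the paper's step of differentiating $Q(v_t)=0$ with $Q=\Id-P$. The concluding appeal to injectivity of $\frak{V}^{(v)}(\varphi_t,v_t;\cdot)$, and the observation that $dP$ cannot be commuted past the horizontal term, also match the paper's argument.
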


\begin{proof}
Write $T_{\mathscr{M}_0}\mathscr{M}$ for the section of $T\mathscr{M}$ above $\mathscr{M}_0$, and write $Q := \Id-P : T_{\mathscr{M}_0}\mathscr{M}\to T_{\mathscr{M}_0}\mathscr{M}$, for the projection on the orthogonal in $T\mathscr{M}$ of $T\mathscr{M}_0$. Note that the differential $dP$ of $P$ identifies to $P$ in the fibers, since it is linear. The identification is up to an isomorphism which is exactly the composition by $\frak V_v$, in the sense that
$$
dP\big(\frak{V}^{(v)}(\varphi,v ; v')\big) = \frak{V}^{(v)}\circ\big(\varphi,v ; P(v')\big)
$$
for any $v,v'\in T_\varphi\mathscr M$. As we work with a $T\mathscr{M}_0$-valued path $(\varphi_t,v_t)$, one has $Q(v_t) = 0$, at all times, so differentiating this identity with respect to $t$ gives 
$$
dQ(\dot v_t)=0. 
$$
Since $P+Q=\Id$, we can conclude with the decomposition \eqref{EqDerivativeSplitting}, by rewriting the expression for the time derivative under the form
\begin{align*}
\frac{dv_t}{dt}
& = dP(\dot v_t) + dQ(\dot v_t)   \\
& = \frak{V}^{(v)}\circ\Big(\varphi_t,v_t ; P\big(K(\dot v_t)\big)\Big) + dP\big(\frak{H}^{(v)}\circ\big(\varphi_t,v_t ; \dot\varphi_t\big)\big)   \\
& = \frak{V}^{(v)}\circ\Big(\varphi_t,v_t ; \overline\nabla^0_{\dot\varphi_t}v_t\Big) + dP\Big(\overline{\frak{H}}^{(v)}\circ(\varphi_t,v_t;\dot\varphi_t)\Big). \qedhere
\end{align*}
\end{proof}

\bigskip

\subsection{Cartan and Lie developments}
\label{SubsectionCartan}

Cartan's moving frame method \cite{Cartan} provides a mechanics for constructing $C^1$ paths on $M$ from $C^1$ path on $\mathbb{R}^d$, giving something of a chart on pathspace in $M$. Its description requires the introduction of the orthonormal frame bundle $OM$ over $M$. It is made up of pairs $z=(m,e)$, with $m\in M$ and $e$ an isometry from $\mathbb{R}^d$ to $T_mM$. It has a natural finite dimensional manifold structure, and the Riemannian connection on $TM$ induces vector fields $H_1,\dots, H_d$ on $OM$ by parallel transport of a frame in the direction of its $i^\textrm{th}$ direction along the corresponding path in $M$. 
\begin{figure}[ht]
\centering
\def\svgwidth{6cm}
\includegraphics[scale=0.4]{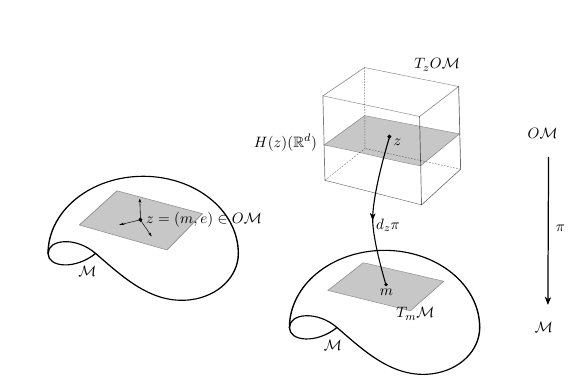}
\vspace{-0.3cm}
\caption{For $z\in OM$ and $a=(a_1,\dots,a_d)\in\mathbb{R}^d$, we have $H(z)(a) := \sum_{i=1}^d a_iH_i(z)\in T_zOM$.}
\end{figure} 
The development in $M$ of a path $(x_t)_{0\leq t\leq 1}$ in $\mathbb{R}^d$ is the natural projection $(m_t)$ in $M$ of the $OM$-valued path $(z_t)$ solution to the equation
$$
\dot z_t = H(z_t)(\dot x_t).
$$
Explosion may happen before time $1$. This path in $M$ depends not only on $m_0$ but also on $e_0$. Conversely, given any $C^1$ path $(m_t)_{0\leq t\leq 1}$ in $M$ and $z_0 = (m_0,e_0) \in OM$ above $m_0$, parallel transport of $e_0$ along the path $(m_t)_{0\leq t\leq 1}$ defines a path $(z_t)_{0\leq t\leq 1}$ in $OM$, and setting $x_t := \int_0^t e_s^{-1}(\dot m_s)\,ds$, defines a path in $\mathbb{R}^d$ whose Cartan development is $(m_t)_{0\leq t\leq 1}$. Geodesics are Cartan's development of straight lines in $\mathbb{R}^d$.

\smallskip

We recast the definition of Cartan development given above in a finite dimensional setting in the following form well suited for the present infinite dimensional setting.

\begin{definition}
\label{DefnCartanDevM}
Let a $C^1$ path $({\bf X}_t)$ in $T_\textrm{\emph{Id}}\mathscr{M}$ be given. An $\mathscr{M}$-valued path $(\varphi_t)$ is the \textbf{Cartan development of $({\bf X}_t)$} if there exists a family 
$$
e_t : T_\textrm{\emph{Id}}\mathscr{M}\to T_{\varphi_t}\mathscr{M},
$$ 
of bounded linear maps, with $e_0=\Id$, such that
\begin{equation}\begin{split}
\label{EqCartanM}
&\dot \varphi_t = e_t(\dot {\bf X}_t), \\
&\overline{\nabla}_{\dot \varphi_t} e_t({\bf Y}) = 0, \quad \text{ for all }{\bf Y}\in T_\textrm{\emph{Id}}\mathscr{M},
\end{split}\end{equation}
at all times where $\varphi_t$ is well-defined.
\end{definition}

This definition conveys the same picture as above. The map $e_t$, named `frame', is transported parallely along the path $(\varphi_t)$, while $\dot \varphi_t$ is given by the image by $e_t$ of $\dot {\bf X}_t$. The existence of a unique Cartan development for a path $({\bf X}_t)$ in $T_\textrm{Id}\mathscr{M}$ is elementary in that case. It follows from Proposition \ref{PropParallelTransportM} that equation \eqref{EqCartanM} is equivalent to requiring that the $H^s\big(F^{(e)}\big)$-valued path $(\varphi_t,e_t)$ satisfies the equation
\begin{equation}   \label{EqCartanODEM}
\frac{d}{dt}(\varphi_t,e_t) = \overline{\frak{H}}^e\big(\varphi_t,e_t; \dot {\bf X}_t\big).
\end{equation}
Since the one-form $\overline{\frak{H}}^e$ is smooth, this equation has a unique solution until its possibly finite explosion time.

\smallskip

Here is now the form of Cartan development dynamics in $\mathscr{M}_0$. Recall $T_\textrm{Id}\mathscr{M}_0$ is the set of $H^s$ divergence-free vector fields on $M$.

\begin{definition}
\label{DefnCartanDevM0}
Let a $C^1$ path $({\bf X}_t)$ in $T_\textrm{\emph{Id}}\mathscr{M}_0$ be given. An $\mathscr{M}_0$-valued path $(\varphi_t)$ is the \textbf{Cartan development of $({\bf X}_t)$} if there exists a family 
$$
e_t : T_\textrm{\emph{Id}}\mathscr{M}_0\to T_{\varphi_t}\mathscr{M}_0,
$$ 
of bounded linear maps, with $e_0=\Id$, such that
\begin{equation}\label{EqParallelTransportM0}
\begin{split}
&\dot \varphi_t = e_t(\dot {\bf X}_t), \\
&\overline{\nabla}^0_{\dot \varphi_t} e_t({\bf Y}) = 0, \quad \text{ for all }{\bf Y}\in T_\textrm{\emph{Id}}\mathscr{M}_0,
\end{split}\end{equation}
at all times where $\varphi_t$ is well-defined.
\end{definition}

The proof of existence of a unique solution to Cartan's development system \eqref{EqParallelTransportM0} in $\mathscr{M}_0$ is not fundamentally different from the case of $\mathscr{M}$, and uses Proposition \ref{PropParallelTransportM0} instead of Proposition \ref{PropParallelTransportM}. It is however more technical, and full details are given in Appendix \ref{AppendixCartanM0}. The system is recast as a controlled ordinary differential equation in the state space 
$$
\mathscr{Z} := H^s(F^{(e)})\times {\sf L}\big(H^s(TM)\big),
$$
with generic element $\big((\varphi,e),f\big)$, and dynamics of the form
\begin{align*}
\frac{d}{dt}\,(\varphi_t,e_t) &= \overline{\frak{H}}^e\Big(\varphi_t,e_t ; f_t(\dot {\bf X}_t)\Big),   \\
\frac{d}{dt}\, f_t & = \overline{\frak{H}}^f\left(\frac{d}{dt}\,(\varphi_t,e_t), f_t\right),
\end{align*}
driven by a \textit{smooth} vector field-valued one form on $T_\textrm{Id}\mathscr{M}_0$. We use Cartan's development map in the configuration manifolds $\mathscr{M}$ and $\mathscr{M}_0$ in the next section. We conclude this section by a brief comparison between Cartan development and the Lie group notion of development, commonly used to define the stochastic Euler equation.

\medskip

Let $G$ stand for a finite dimensional Lie group with Lie algebra $\textsc{Lie}(G)$. Lie's development operation provides another way of constructing paths 
$$
(g_t)_{0\leq t\leq 1}
$$ 
with values in $G$ from paths $(x_t)_{0\leq t\leq 1}$ in $\mathbb{R}^d$, by identifying $T_{g_0}G$ and $\mathbb{R}^d$ via a linear map $\iota_0$, and solving the ordinary differential equation
$$
\dot g_t = \iota_0(\dot x_t)\,g_t.
$$
In such a group setting, Malliavin and Airault \cite{MalliavinAirault} gave a correspondance between the Cartan and Lie notions of development, although this was certainly known to practitioners before; see also \cite{CFM}. Choose an orthonormal basis of the Lie algebra of $G$, and denote by $c_{k,\ell}^n$ the structure constants, so the Christoffel symbols are given by $\Gamma_{k,\ell}^n = \frac{1}{2}\,\big(c_{k,\ell}^n-c_{\ell,n}^k+c_{n,k}^\ell\big)$. Write $\Gamma_k$ for the antisymmetric endomorphism with matrix $\Gamma_{k,\cdot}^\cdot$ in the chosen basis, for $1\leq k\leq d$, and consider $\Gamma$ as a linear map from $\mathbb{R}^d$ into the set of antisymmetric endomorphism of the Lie algebra. Denote by $O\textsc{Lie}(G)$ the orthonormal group of $\textsc{Lie}(G)$.

\begin{proposition}   \label{PropMalliavin}
Let $(w_t)_{0\leq t\leq 1}$ be a $C^1$ path in the Lie algebra of $G$. The path $(g_t)_{0\leq t\leq 1}$ solution to the  $\big(O\textsc{Lie}(G)\times G\big)$-valued equation
\begin{equation}\begin{split}\label{EqLieCartan}
dO_t :=& \;O_t\,\Gamma(\dot w_t)\,dt, \quad O_0=\textrm{\emph{Id}},  \\
dg_t \vspace{0.1cm}=&\;O_t(\dot w_t) g_t,
\end{split}\end{equation}
is the Cartan development of the path $(w_t)$.
\end{proposition}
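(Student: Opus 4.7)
The plan is to translate Cartan's development equations \eqref{EqCartanM} into the language of the group structure of $G$, following the strategy of Malliavin and Airault \cite{MalliavinAirault}.

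First, I will exploit the invariance of the metric to trivialize $TG \simeq G\times\textsc{Lie}(G)$, so that every tangent vector $v\in T_gG$ writes $v = a\cdot g$ for a unique $a\in\textsc{Lie}(G)$. Under this identification, the parallel-transported frame $e_t : T_{\Id}G \to T_{g_t}G$ of Definition \ref{DefnCartanDevM} is represented by a path of orthogonal transformations $O_t\in O\,\textsc{Lie}(G)$ via $e_t(Y) = O_t(Y)\cdot g_t$. The velocity condition $\dot g_t = e_t(\dot w_t)$ then immediately becomes the second equation of \eqref{EqLieCartan}.

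Next, I will expand the parallel transport condition $\overline\nabla_{\dot g_t}e_t(Y) = 0$ in the invariant orthonormal frame $(X_n)$ of $TG$ associated with $(e_n)$. In this frame the components of $e_t(Y)$ and of $\dot g_t$ are the Lie algebra vectors $O_tY$ and $O_t\dot w_t$, so the Leibniz rule reduces the parallel transport equation to a linear ODE for $(O_t)$ whose right-hand side is built from the Christoffel symbols of $\overline\nabla$. Koszul's identity, applied in an invariant orthonormal frame where $\langle X_k,X_\ell\rangle$ is constant, kills the three derivative terms and leaves exactly
\begin{equation*}
\Gamma_{k,\ell}^n \,=\, \tfrac{1}{2}\,\bigl(c_{k,\ell}^n - c_{\ell,n}^k + c_{n,k}^\ell\bigr)
\end{equation*}
as announced in the proposition. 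Matching the resulting ODE with $\dot O_t = O_t\,\Gamma(\dot w_t)$ recovers the first equation of \eqref{EqLieCartan}, and the antisymmetry of $\Gamma_k$ -- itself automatic from the Koszul formula in an orthonormal frame -- ensures $O_t$ stays in $O\,\textsc{Lie}(G)$ for all $t$.

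The hard part will be the careful bookkeeping of conventions: which side (left or right) is used to trivialize $TG$, how the invariance of the metric matches this choice, and the corresponding signs of structure constants in the commutators of invariant vector fields. Once a single consistent set of conventions is fixed to match the statement of the proposition, the equivalence between \eqref{EqCartanM} and \eqref{EqLieCartan} reduces to the above algebraic identification and follows by uniqueness of solutions to either ODE system.
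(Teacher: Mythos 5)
The paper never proves Proposition \ref{PropMalliavin}; it is quoted from Malliavin and Airault \cite{MalliavinAirault}, so there is no in-paper argument to compare yours with, and I can only assess your outline on its own terms. Your setup is the natural one: right-trivialise $TG$, write the frame as $e_t = dR_{g_t}\circ O_t$ with $O_t\in O\textsc{Lie}(G)$, note that $\dot g_t = e_t(\dot w_t)$ is exactly the second equation of \eqref{EqLieCartan}, and compute the Christoffel symbols of the invariant orthonormal frame by Koszul (the derivative terms vanish and the antisymmetry of $\Gamma_k$ is indeed automatic).

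The gap sits precisely in the step you defer to ``bookkeeping of conventions''. Carrying out the Leibniz computation for $\overline\nabla_{\dot g_t}e_t(Y)=0$ in the right-invariant frame, the connection term involves $\Gamma$ evaluated at the actual velocity $u_t:=O_t\dot w_t$ of the curve and acting on the components $O_tY$ of the transported vector, so that for every $Y$ one finds
\begin{equation*}
\frac{d}{dt}\big(O_tY\big)=\Gamma\big(O_t\dot w_t\big)\big(O_tY\big),
\qquad\text{i.e.}\qquad
\dot O_t=\Gamma\big(O_t\dot w_t\big)\circ O_t ,
\end{equation*}
with $\Gamma$ taken at the \emph{rotated} velocity and composed on the \emph{left}. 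This is not $\dot O_t=O_t\circ\Gamma(\dot w_t)$, and no choice of left/right trivialisation or sign of the structure constants exchanges the order of composition: the two ODEs coincide only when $O_t$ intertwines $\Gamma$, i.e. $\Gamma(O_ta)=O_t\,\Gamma(a)\,O_t^{-1}$, which holds in the bi-invariant case (where $\Gamma=\tfrac12\,\mathrm{ad}$ and $O_t$ stays in $\mathrm{Ad}(G)$) but fails for a generic one-sided invariant metric such as the right-invariant $L^2$ metric this proposition is meant to model. A quick consistency check makes the difference visible on geodesics $w_t=t\omega$: the form $\dot O_t=\Gamma(O_t\dot w_t)O_t$ yields the Euler--Arnold equation $\dot u_t=\Gamma(u_t)u_t$ for $u_t=O_t\omega$, whereas $\dot O_t=O_t\Gamma(\dot w_t)$ yields the linear equation $\dot u_t=\Gamma(\omega)u_t$, and these have different solutions already for the asymmetric rigid body. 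So ``matching the resulting ODE with $\dot O_t=O_t\Gamma(\dot w_t)$'' is not a convention issue but the crux: you must either derive and prove the statement with the corrected equation $\dot O_t=\Gamma(O_t\dot w_t)\,O_t$, or exhibit the precise (nonstandard) definition of $O_t$ under which the form printed in the proposition is the right one.
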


(The system \eqref{EqLieCartan} is reminiscent of the equation in 
$$
H^s(F^{(e)})\times {\sf L}\big(H^s(TM)\big)
$$ 
from Appendix \ref{AppendixCartanM0}, recasting Cartan's development dynamics in $\mathscr{M}_0$.)  The geodesic started from the identity of $G$, with direction $\omega\in \textsc{Lie}(G)$, is in particular given in the Lie picture as the solution $(g_t)_{0\leq t\leq 1}$ to the equation 
$$
\dot g_t = \exp\big(t\Gamma(\omega)\big)(\omega)\,g_t.
$$
Note that $\exp\big(t\Gamma(\omega)\big)(\omega)\in\textsc{Lie}(G)$. Note also that it is the fact that the Christoffel symbols are constants that allows to reduce the second order differential equation for the geodesics on a generic Riemannian manifold into a first order differential equation, in a Riemannian Lie group setting.

\smallskip

Following Euler's picture, it is this group-oriented point of view that has been considered so far in the geometric viewpoint on fluid hydrodynamics, deterministic or stochastic. The naive implementation of Cartan's machinery in terms of Lie development runs into trouble in the infinite dimensional setting of $\mathscr{M}$ or $\mathscr{M}_0$. This can be seen on the example of the two dimensional torus and the volume preserving diffeomorphism group as a consequence of the fact that Christoffel symbols define antisymmetric unbounded operators that have no good exponential in the orthonormal group of $T_{\textrm{Id}}\mathscr{M}_0$. The problem comes from the fact that $\mathscr{M}$ of $\mathscr{M}_0$ have a \textit{fixed} regularity. See Malliavin's works \cite{Malliavin, CFM} for a quantification of the loss of regularity of Brownian motion in the set of homeomorphisms of the circle, as time increases. The Lie development picture of Cartan's development map can however be used for numerical purposes for simulating kinetic Brownian motion in $\mathscr{M}_0$. It corresponds to having $\dot w_t$ a Brownian motion on the unit sphere of the $H^s$ space of divergence-free vector fields on $M$; see Section \ref{SectionKBM}.

\section{Kinetic Brownian motion on the diffeomorphism group}
\label{SectionKBM}

Pick $s>\frac{d}{2}$, or $s>\frac{d}{2}+1$, depending on whether we work on $\mathscr{M}$ or $\mathscr{M}_0$.

\subsection{Kinetic Brownian motion in $\mathscr{M}$}
\label{SubsectionKBMM}

Set $H := H^s(TM)$. Pick another exponent $a>\frac{1}{2}$, and let $\mathcal{H}$ stand for the $L^2$-orthogonal of $\textrm{ker}(\Delta)$ in $H^{s+a}(TM)$, with norm
$$
\|f\|_{s+a}^2 = \sum_{n\geq 1} \vert\lambda_n\vert^{s+a}\|f_n\|_{L^2}^2,
$$
inherited from the eigenspace decomposition \eqref{EqEigenspaceDecomposition} of $L^2(TM)$. Let $\iota$ stand for the continuous inclusion of $\mathcal{H}$ into $H$. The continuous symmetric operator $\iota\iota^* : H\rightarrow H$, is trace-class, as a consequence of Weyl's law on a closed manifold, so it is the covariance of an $H$-valued Brownian motion $W$. Note the correspondance $\overline{C}=\iota\iota^*$, and
$$
\alpha_n^2 = \vert\lambda_n\vert^{-a},
$$
with the notations of Section \ref{SubsectionBMSphere}. \textit{We assume that the trace condition}
\begin{equation}   \label{EqTraceCondition}
3\alpha_1^2 < \textrm{tr}(\overline{C}), 
\end{equation}
\textit{holds true}. Note that the faster $\lambda_i$ goes to $\infty$, the lesser there is noise in $W$. The extreme case corresponds to only finitely many non-null $\alpha_i$. On the other extreme, the bigger the multiplicity of $\alpha_1^2$ is, the more noise there is in $W$. The trace condition \eqref{EqTraceCondition} holds automatically as soon as $\alpha_1^2$ has multiplicity three.

\smallskip

The Brownian motion $v^\sigma_t$ on the sphere $S$ of $H$, associated with the injection $\mathcal{H}\hookrightarrow H$, is defined as the solution to the stochastic differential equation
$$
dv^\sigma_t = \sigma\,P_{v^\sigma_t}({\circ dW_t}),
$$ 
where $P_a : H\rightarrow H$, is the orthogonal projection on $\langle a\rangle^\perp$, for any $a\neq 0$, and the position process $x^\sigma_t$ of kinetic Brownian motion $\big(x^\sigma_t, v^\sigma_t\big)$ in $H$, given as its integral
$$
x^\sigma_t = x_0 + \int_0^t v^\sigma_s\,ds.
$$
Kinetic Brownian motion on $\mathscr{M}$ is then defined as Cartan development in $\mathscr{M}$ of the time rescaled kinetic Brownian motion $\big(x^\sigma_{\sigma^2t}\big)$ in $H$.

\begin{definition}
Kinetic Brownian motion on $\mathscr{M}$ is the projection $\varphi^\sigma_t$ on the configuration space $\mathscr{M}$ of the solution $\big(\varphi^\sigma_t, e^\sigma_t\big)$ to the equation in $H^s(F^{(e)})$
\begin{equation}   \label{EqDevptM}
\frac{d}{dt}\,(\varphi^\sigma_t,e^\sigma_t) = \overline{\frak{H}}^e\Big(\varphi^\sigma_t,e^\sigma_t; \sigma^2v_{\sigma^2t}^\sigma\Big),
\end{equation}
with initial condition $\varphi_0=\textrm{\emph{Id}}$ and $e_0 = \textrm{Id}\in {\sf L}\big(H^s(TM)\big)$.
\end{definition}

This equation is only locally well-posed. We introduce the following definition to deal with weak convergence questions for possibly exploding solutions of random or stochastic differential equations. Add a cemetary point $\dag$ to $H^s(F^{(e)})$, and endow the disjoint union $H^s(F^{(e)})\sqcup\{\dag\}$ with its natural topology. Denote by $\Omega_0$ the set of continuous paths $z : [0,1]\rightarrow H^s(F^{(e)})\sqcup\{\dag\}$, that start from a reference point $z_0 := (\textrm{Id}, e_0)$ above the identity map on $M$, and that stay at the cemetery point $\dag$, if it leaves $H^s(F^{(e)})$. Let $\mathcal{F}:= \bigvee_{t \in [0,1]} \mathcal{F}_t$ where $(\mathcal{F}_t)_{0\leq t\leq 1}$ stands for the filtration generated by the canonical coordinate process on pathspace. Let $B_R$ stand for the $H^s$ balls with center $z_0$  and radius $R$, for any $R>0$. The first exit time from $B_R$ is denoted by $\tau_R$, and used to define a measurable map 
$$
T_R : \Omega_0\rightarrow C\big([0,1], \overline{B}_R\big),
$$
which associates to any path $(z_t)_{0 \leq t \leq 1} \in \Omega_0$ the path which coincides with $z$ on the time interval $\big[0,\tau_R\big]$, and which is constant, equal to $z_{\tau_R}$, on the time interval $\big[\tau_R,1\big]$. The following definition then provides a convenient setting for dealing with sequences of random process whose limit may explode.

\begin{definition}   \label{DefnLocalWeakConvergence}
A sequence $(\mathbb{Q}_n)_{n\geq 0}$ of probability measures on $\big(\Omega_0,\mathcal{F}\big)$ is said to \textbf{converge locally weakly} to some limit probability $\mathbb{Q}$ if the sequence $\mathbb{Q}_n\circ {T}_R^{-1}$  of probability measures on $C([0,1], \overline{B}_R)$ converges weakly to $\mathbb{Q}\circ {T}_R^{-1}$, for every $R>0$.
\end{definition}

We proved in Theorem \ref{ThmConvergenceRP} that the canonical rough path lift ${\bf X}^\sigma$ of  $\big(x^\sigma_{\sigma^2t}\big)_{0\leq t\leq 1}$, converges weakly in the space of weak geometric $p$-rough paths in $H$, to the Stratonovich Brownian rough path ${\bf B} = (B,\mathbb{B})$, with covariance operator
$$ 
C_B(\ell,\ell') = \int_0^\infty\mathbb E\Big[\ell(v_0)\ell'(v_t)+\ell'(v_0)\ell(v_t)\Big]dt, \quad \ell,\ell'\in H^*.
$$
Since one can rewrite Equation \eqref{EqDevptM} as a rough differential equation driven by the rough path ${\bf X}^\sigma$
$$
\frac{d}{dt}\,(\varphi^\sigma_t,e^\sigma_t) = \overline{\frak{H}}^e\Big(\varphi^\sigma_t,e^\sigma_t; d{\bf X}^\sigma_t\Big),
$$
the continuity of the It\^o-Lyons solution map gives the following theorem. Recall that the solution of a rough differential equation driven by the Stratonovich Brownian rough path coincides almost surely with the solution of the corresponding Stratonovich differential equation.

\begin{theorem}   \label{ThmHomogenizationM}
The $\mathscr{M}$-valued part $(\varphi^\sigma_t)$ of kinetic Brownian motion is converging locally weakly to the projection on $\mathscr{M}$ of the $H^s(F^{(e)})$-valued Brownian motion $(\varphi_t, e_t)$ solution to the stochastic differential equation
$$
\frac{d}{dt}\,(\varphi_t, e_t) = \overline{\frak{H}}^e\Big((\varphi_t, e_t) ; {\circ d}B_t\Big).
$$
\end{theorem}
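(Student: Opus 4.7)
The plan is to recast equation \eqref{EqDevptM} as a rough differential equation in the Hilbert manifold $H^s(F^{(e)})$, driven by the canonical rough path $\bfX^\sigma$ lifting the time-rescaled position process $X^\sigma_t = x^\sigma_{\sigma^2t}$, and then read the conclusion off the continuity of the It\^o--Lyons solution map together with the rough path invariance principle from Theorem \ref{ThmConvergenceRP}. Since $X^\sigma$ is a genuine $C^1$ path, its canonical rough path lift $\bfX^\sigma$ drives the equation
$$
d(\varphi^\sigma_t, e^\sigma_t) = \overline{\frak{H}}^e\big((\varphi^\sigma_t, e^\sigma_t);\, d\bfX^\sigma_t\big)
$$
in both the ODE and rough differential equation senses, the two notions of solution coinciding for smooth driving signals.

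The one-form $\overline{\frak{H}}^e$ on $T_\textrm{Id}\mathscr{M}$ with values in vector fields on $H^s(F^{(e)})$ was shown in Section \ref{SubsectionParallelTransport} to be smooth, via the Omega lemma. Because $H^s(F^{(e)})$ is infinite dimensional and $\overline{\frak{H}}^e$ is not globally $C^3_b$, only the local version of Lyons' universal limit theorem applies; solutions may in principle leave every fixed ball $B_R$, hence the need for the cemetery point $\dag$ and Definition \ref{DefnLocalWeakConvergence}. The standard localization argument, namely replacing $\overline{\frak{H}}^e$ by a cut-off one-form $\overline{\frak{H}}^e_R$ that agrees with $\overline{\frak{H}}^e$ on $B_R$ and is globally $C^3_b$ on the whole Hilbert manifold, reduces the question to the globally bounded setting treated in \cite{CassWeidner, BailleulSeminaire}. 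Up to the first exit time $\tau_R$, the map sending $\bfX^\sigma$ to the stopped solution factorizes through a \emph{continuous} It\^o--Lyons map $\mathrm{RP}(\alpha^{-1}) \rightarrow C\big([0,1], \overline{B}_R\big)$, where the rough paths space is built with the Hilbert--Schmidt tensor norm on $H\otimes H$ fixed in the Notations.

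With Theorem \ref{ThmConvergenceRP} supplying the weak convergence of $\bfX^\sigma$ to the Stratonovich Brownian rough path $\bf B$ in $\mathrm{RP}(\alpha^{-1})$, the continuous mapping theorem applied to this localized It\^o--Lyons map yields the weak convergence in $C\big([0,1], \overline{B}_R\big)$ of $T_R\circ(\varphi^\sigma, e^\sigma)$ to $T_R\circ(\varphi,e)$, where $(\varphi,e)$ solves the rough differential equation driven by $\bf B$. Letting $R$ range over all positive reals produces exactly the local weak convergence prescribed in Definition \ref{DefnLocalWeakConvergence}. The identification of the limit as a Stratonovich SDE solution is standard: the solution of a rough differential equation driven by a Stratonovich Brownian rough path coincides almost surely with the solution of the corresponding Stratonovich SDE, as recalled before Theorem \ref{ThmRDESolution}. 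Projecting on the first component gives the statement for $(\varphi_t)$.

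The main obstacle in this scheme lies not in the rough paths conclusion itself but in ensuring that the infinite dimensional version of Lyons' universal limit theorem genuinely applies to $\overline{\frak{H}}^e$ after the cut-off procedure. One must check that the Hilbert manifold $H^s(F^{(e)})$ carries sufficiently many smooth bump functions supported in $B_R$ to produce a globally $C^3_b$ modification of $\overline{\frak{H}}^e$, and that the resulting one-form fits the hypotheses of \cite{CassWeidner, BailleulSeminaire} with respect to the Hilbert--Schmidt tensor norm chosen on $H\otimes H$. Once these verifications are carried out, everything else is a direct transcription to the infinite dimensional setting of the continuity argument already used in the finite dimensional Cartan development of \cite{ABT}.
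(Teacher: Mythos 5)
Your proposal is correct and follows essentially the same route as the paper: rewrite \eqref{EqDevptM} as a rough differential equation driven by the canonical lift ${\bf X}^\sigma$, invoke Theorem \ref{ThmConvergenceRP} together with the (localized) continuity of the It\^o--Lyons solution map, and identify the limit via the Stratonovich correspondence. The localization details you flag are exactly what the paper delegates to Section 2.4.2 of \cite{ABT} and to Definition \ref{DefnLocalWeakConvergence}, so nothing further is needed.
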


The motion of $\varphi_t$ itself is not given as the solution of a stochastic differential equation. This happens already in finite dimension, when defining anisotropic Brownian motion on a $d$-dimensional Riemannian manifold $M$ as Cartan development of an anisotropic Brownian motion in $\mathbb{R}^d$. One needs the moving orthonormal frame attached to the running point on $M$, to define the position increment in $M$ from the increment of the driving anisotropic Brownian motion in $\mathbb{R}^d$. The motion in $M$ is in particular non-Markovian, while the motion in $OM$ is Markovian. The same phenomenon happens in the present infinite dimensional setting, and we do not get here classical semimartingale flows in $H^s(M,M)$ \cite{Kunita}, or Brownian flows in critical spaces, such as in Malliavin's work on the canonical Brownian motion on the diffeomorphism group of the circle \cite{Malliavin, Fang, AiraultRen}.

We remark here that the stochastic homogenization methods that X.-M. Li used in \cite{XueMei1} to prove the homogenization result for kinetic Brownian motion in a finite dimensional, complete, Riemannian manifold, require a positive injectivity radius and a uniform control on the gradient of the distance function over the whole manifold. It is unclear that anything like that is available in the present infinite dimensional setting, or in the setting of volume-preserving diffeomorphisms investigated in the next section, especially given the fact that $\mathscr{M}$ or $\mathscr{M}_0$ have infinite negative curvature in some directions. The robust pathwise approach of rough paths allows to circumvent these potential issues.

\subsection{Kinetic Brownian motion in $\mathscr{M}_0$}
\label{SubsectionKBMM0}

Let $H_0$ stand for the closed subspace of $H$ of divergence-free vector fields on the fluid domain $M$. It is the tangent space at the identity map of the closed submanifold $\mathscr{M}_0$ of $\mathscr{M}$ of diffeomorphisms that leave invariant the Riemannian volume form of $M$. The intersection $\mathcal{H}^{s+a}_0$ of $\mathcal{H}^{s+a}$ with $H_0$, is continuously embedded into $H_0$. If $\iota_0$ stands for this injection, the continuous symmetric operator $\iota_0\iota_0^* : H_0\rightarrow H_0$, is trace-class, so it is the covariance of an $H_0$-valued Brownian motion $W$. The spectrum of $\overline{C}_0 := \iota_0\iota_0^*$ is explicit in the example of the $2$-dimensional torus, with maximal eigenvalue $1$, with multiplicity $4$. The trace condition \eqref{EqTraceCondition} thus holds true for any $a>\frac{1}{2}$, in that case. Similarly, the spectrum of the Laplacian operator on vector fields on the $2$-dimensional sphere is obtained from the spectrum of the Laplacian operator on real-valued functions on the $2$-sphere, as a consequence of its canonical symplectic structure \cite{ArakelyanSavvidy, Yoshida}. Eigenvectors are constant multiples of the complex spherical harmonics, so eigenvalues have multiplicity at least two. Here as well, symmetry properties of the $2$-dimensional sphere imply that they have actually multiplicity four, so the trace condition \eqref{EqTraceCondition} holds for free. More generally, divergence-free vector fields on a simply connected $d$-dimensional manifold $M$ are gradients of functions, so one gets the spectrum of the covariance operator $C$ from the spectrum of the Laplacian operator on real-valued functions on $M$. One needs to assume the trace condition \eqref{EqTraceCondition} in this generality.

\smallskip

Kinetic Brownian motion $(x^\sigma_t, v^\sigma_t)$ in $H_0$ is defined as above from the associated Brownian motion $(v^\sigma_t)$ on the sphere $S_0$ of $H_0$, and its integral. We prove in Theorem \ref{ThmCartanM0} of Appendix \ref{AppendixCartanM0} that the Cartan development $\varphi^\sigma_t$ in $\mathscr{M}_0$, of the time rescaled kinetic Brownian motion in $H_0$ is the $\mathscr{M}_0$-part of the solution $(\varphi^\sigma_t, e^\sigma_t, f^\sigma_t)$, to a controlled ordinary differential equation on 
$$
\mathscr{Z} = H^s(F^{(e)})\times {\sf L}\big(H^s(TM)\big)
$$ 
driven by a \textit{smooth} vector field
\begin{align*}
\frac{d}{dt}\,\big(\varphi^\sigma_t,e^\sigma_t\big) &= \overline{\frak{H}}^e\Big(\varphi^\sigma_t,e^\sigma_t ; f^\sigma_t\big(\sigma^2 v^\sigma_t\big)\Big),   \\
\frac{d}{dt}\, f^\sigma_t & = \overline{\frak{H}}^f\left(\frac{d}{dt}\,\big(\varphi^\sigma_t,e^\sigma_t\big), f^\sigma_t\right).
\end{align*}
Here again, one can rewrite that equation as a rough differential equation driven by the canonical rough path ${\bf X}^\sigma$ above the time rescalled position process of kinetic Brownian motion in $H_0$. The continuity of the It\^o-Lyons solution map then gives the following theorem. 

\begin{theorem}   \label{ThmHomogenizationM0}
The $\mathscr{M}_0$-valued part $(\varphi^\sigma_t)$ of kinetic Brownian motion in $\mathscr{Z}$ is converging locally weakly to the projection $(\varphi_t)$ on $\mathscr{M}_0$ of a $\mathscr{Z}$-valued Brownian motion.
\end{theorem}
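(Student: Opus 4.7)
The strategy is to copy the architecture of the proof of Theorem \ref{ThmHomogenizationM}, using the smooth reformulation of Cartan development in $\mathscr{M}_0$ as an ordinary differential equation on the auxiliary state space $\mathscr{Z} = H^s(F^{(e)}) \times {\sf L}(H^s(TM))$ provided in Appendix \ref{AppendixCartanM0}. The key point is that, after that reformulation, the right-hand side is linear in the driving signal and smooth in the state, so the equation
\begin{equation*}
\begin{split}
\frac{d}{dt}\big(\varphi^\sigma_t,e^\sigma_t\big) &= \overline{\frak{H}}^e\Big(\varphi^\sigma_t,e^\sigma_t \,;\, f^\sigma_t(\sigma^2 v^\sigma_{\sigma^2 t})\Big), \\
\frac{d}{dt}\, f^\sigma_t &= \overline{\frak{H}}^f\Big(\tfrac{d}{dt}(\varphi^\sigma_t,e^\sigma_t), f^\sigma_t\Big),
\end{split}
\end{equation*}
driven by the $C^1$ path $t \mapsto X^\sigma_t = \int_0^t \sigma^2 v^\sigma_{\sigma^2 s}\,ds$ in $H_0$, may be recast as a rough differential equation
\begin{equation*}
d\mathbf{z}^\sigma_t \;=\; \mathbf{V}(\mathbf{z}^\sigma_t)\, d{\bfX}^\sigma_t, \qquad \mathbf{z}^\sigma_0 = (\textrm{Id},\textrm{Id},\textrm{Id}),
\end{equation*}
on $\mathscr{Z}$, where $\mathbf{V}$ is a smooth one-form on $H_0$ with values in smooth vector fields on $\mathscr{Z}$ built from $\overline{\frak{H}}^e$ and $\overline{\frak{H}}^f$, and $\bfX^\sigma = (X^\sigma, \mathbb{X}^\sigma)$ is the canonical rough path lift considered in Section \ref{SubsectionCvgceRP} (taking values in $H_0$ rather than $H$, but constructed identically).

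The proof then proceeds in three steps. First, I would verify that Theorem \ref{ThmConvergenceRP} applies verbatim to the $H_0$-valued kinetic Brownian motion: the trace-class covariance $\overline{C}_0 = \iota_0\iota_0^*$ satisfies the trace condition \eqref{EqTraceCondition} in all cases discussed (torus, sphere, simply connected domains under the stated hypothesis), and nothing in Sections \ref{SubsectionMixing}, \ref{SubsectionCvgceX}, \ref{SubsectionCvgceRP} uses that the ambient Hilbert space is all of $H$ rather than the closed subspace $H_0$. Hence $\bfX^\sigma$ converges weakly in ${\sf RP}(\alpha^{-1})$, for any $1/3 < \alpha < 1/2$, to the Stratonovich Brownian rough path $\bf B$ above the $H_0$-valued Brownian motion with covariance \eqref{EqCovarianceLimitBM}. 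Second, I would invoke Lyons' universal limit theorem in the Banach/Hilbert manifold setting (see the references \cite{Lyons98, CassWeidner, BailleulSeminaire} cited in Section \ref{SubsectionRP}) to get the continuity of the Itô–Lyons solution map $\bfX \mapsto \mathbf{z}$ associated with the smooth vector field one-form $\mathbf{V}$. Because the vector fields on $\mathscr{Z}$ are not globally bounded, one only obtains local existence, and continuity holds in the adapted sense recalled at the end of Section \ref{SubsectionRP}, which is precisely tailored to yield local weak convergence in the sense of Definition \ref{DefnLocalWeakConvergence}.

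Third, combining these two ingredients by the continuous mapping theorem for local weak convergence, the laws of $\mathbf{z}^\sigma = (\varphi^\sigma, e^\sigma, f^\sigma)$ converge locally weakly on $\mathscr{Z}$ to the law of the solution $\mathbf{z} = (\varphi, e, f)$ of the rough differential equation driven by $\bf B$. Using the identification of rough solutions with Stratonovich solutions under the Brownian rough path (recalled in Section \ref{SubsectionRP}), the limit $\mathbf{z}$ is a $\mathscr{Z}$-valued Brownian motion in the sense that it solves the Stratonovich stochastic differential equation $d\mathbf{z}_t = \mathbf{V}(\mathbf{z}_t)\,{\circ\, d}B_t$. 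Projecting on the $\mathscr{M}_0$ component, which is a continuous operation on $\mathscr{Z}$, yields the desired local weak convergence of $\varphi^\sigma$ to $\varphi$.

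The main obstacle is neither the convergence of $\bfX^\sigma$ (handled by Theorem \ref{ThmConvergenceRP}) nor the continuity of the solution map (which is a general rough-paths statement), but rather checking rigorously that the reformulation of Cartan development on $\mathscr{M}_0$ as a smooth ODE on $\mathscr{Z}$ is indeed driven by a one-form that is smooth with respect to the natural Hilbert–Schmidt tensor norm on $H_0 \otimes H_0$. This amounts to controlling the smoothness of the Hodge projector $P_\varphi$ of \eqref{EqHodgeProjector} as a function of $\varphi \in \mathscr{M}_0$, together with the parallel transport field $\overline{\frak{H}}^e$ and its extension to ${\sf L}(H^s(TM))$; this is precisely the technical content of Appendix \ref{AppendixCartanM0}, and once those smoothness statements are in place, the whole argument is purely a continuity/convergence exercise in rough paths theory.
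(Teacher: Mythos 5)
Your proposal is correct and follows essentially the same route as the paper: reformulate Cartan development on $\mathscr{M}_0$ as a smooth controlled ODE on $\mathscr{Z}$ via Theorem \ref{ThmCartanM0} of Appendix \ref{AppendixCartanM0}, recast it as a rough differential equation driven by the canonical lift ${\bf X}^\sigma$ of the $H_0$-valued position process, apply Theorem \ref{ThmConvergenceRP} (which indeed carries over verbatim to the closed subspace $H_0$), and conclude by continuity of the It\^o--Lyons solution map in the local sense of Definition \ref{DefnLocalWeakConvergence}. You also correctly locate the real technical burden in the smoothness statements of the Appendix rather than in the rough-paths continuity argument.
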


Here again, the dynamics of $\varphi_t^\sigma$ is non-Markovian. Note that since kinetic Brownian motion on $\mathscr{M}_0$ is defined by Cartan development, using the $L^2$ metric \eqref{EqL2Metric}, the $L^2$-size of $\dot\varphi_t^\sigma$ is equal to the $L^2$-norm of $v^\sigma_t$. The metric being right invariant on the group $\mathscr{M}_0$, the Eulerian velocity 
$$
u_t^\sigma := \dot \varphi^\sigma_t\circ(\varphi^\sigma_t)^{-1},
$$
also has the same $L^2$-norm as $v^\sigma_t$. The latter is not preserved a priori; neither is the $H^s$-norm of $u^\sigma_t$, as mentioned above after Proposition \ref{PropMalliavin}.

Denote by $Q^0$ the quadratic form on $H^s(TM)$, with matrix 
$$\textrm{diag}\big(\vert\lambda_n\vert^{-s}\big)_{n\geq 0}, 
$$
in the orthonormal basis of $H^s(TM)$ associated with the eigenvector decomposition \eqref{EqEigenspaceDecomposition} for $-\Delta$ on $L^2(TM)$. For each $v$ in the unit sphere $S$ of $H^s(TM)$, one has $Q^0(v) = \|v\|_{L^2}^2$, and
$$
\|v\|_{L^2}^2 \leq \lambda_0^{-s}\|v\|_{H^s}.
$$ 
Since the $S$-valued diffusion $(v_t^\sigma)$ is ergodic, each component $(v_t^\sigma)_n$ of $v_t^\sigma$, in the decomposition \eqref{EqEigenspaceDecomposition}, is an ergodic process in the interval $\big(-\lambda_n^{-s/2}, \lambda_n^{-s/2}\big)$. The squared $L^2$-norm of $v_t^\sigma$ is also an ergodic process in the interval $(0,\lambda_0^{-s})$. It has invariant measure the image of a constant multiple of the measure with density $1/\|u\|$ with respect to the Gaussian measure in $H$ with covariance $\iota_0\iota_0^*$, by the map 
$$ 
u\in H \mapsto Q^0\big(u/\|u\|\big),
$$
from Proposition \ref{PropInvariantMeasure}. This is the invariant measure of the squared $L^2$-norm of the Eulerian velocity process $u_t^\sigma$. We emphasize that this invariant measure is independent of the interpolation parameter $\sigma\in (0,\infty)$. We record part of these facts in the following statement.

\begin{corollary}
Fix $\sigma\in (0,\infty)$. The $L^2$-norm of the velocity field $u^\sigma$ of kinetic Brownian motion is an ergodic process taking values in the interval $(0,\lambda_0^{-s})$, with invariant probability measure the image of a constant multiple of the measure with density $1/\|u\|$ with respect to the Gaussian measure in $H$ with covariance $\iota_0\iota_0^*$, by the map 
$$ 
u\in H \mapsto Q^0\big(u/\|u\|\big).
$$
\end{corollary}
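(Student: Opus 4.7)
First I would reduce the $L^2$-norm of the Eulerian velocity $u^\sigma$ to a functional of the spherical process $(v^\sigma_t)$. Because the Levi-Civita connection $\overline\nabla^0$ of the $L^2$-metric on $\mathscr{M}_0$ is metric-compatible (a consequence of the identity $\overline\nabla^0 = P\circ\overline\nabla$ together with the $L^2$-orthogonality of the Hodge projection $P$), parallel transport along $(\varphi^\sigma_t)$ is fiberwise $L^2$-isometric, so the frame $e^\sigma_t$ from Definition \ref{DefnCartanDevM0} satisfies $\|e^\sigma_t(w)\|_{L^2} = \|w\|_{L^2}$ for every $w\in T_\text{Id}\mathscr{M}_0$. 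Combining this with the Cartan development equation $\dot\varphi^\sigma_t = e^\sigma_t(\dot{\bf X}^\sigma_t)$, and with the right-invariance of the $L^2$ metric on $\mathscr{M}_0$ (change-of-variables formula), recovers the identity $\|u^\sigma_t\|_{L^2} = \|v^\sigma_{\,\cdot\,}\|_{L^2}$ stated in the paragraph preceding the corollary.

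Next I would identify $\|v\|_{L^2}^2$ restricted to the $H^s$-unit sphere $S_0 \subset H_0$ with the quadratic form $Q^0$. In the $H^s$-orthonormal basis $g_n := |\lambda_n|^{-s/2} f_n$, with $(f_n)$ the $L^2$-orthonormal eigenbasis of $-\Delta$ from \eqref{EqEigenspaceDecomposition}, any $v = \sum c_n g_n \in H_0$ satisfies $\|v\|_{L^2}^2 = \sum_n |\lambda_n|^{-s} c_n^2 = Q^0(v)$. On $S_0$ the form $Q^0$ then takes values in $(0,\lambda_0^{-s}]$; its upper endpoint is attained only on the lowest eigenspace of $-\Delta$, a proper closed subspace of $H_0$ carrying zero mass for the invariant measure $\mu$, so the effective range is the open interval $(0,\lambda_0^{-s})$ claimed in the statement.

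It remains to transfer ergodicity from $v^\sigma$ to its norm. By Proposition \ref{prop.exponentialmixing} the spherical Markov process $(v^\sigma_t)$ is exponentially mixing, hence uniquely ergodic, and Theorem \ref{PropInvariantMeasure} describes its unique invariant measure $\mu$ as the pushforward of the finite measure $\frac{1}{Z}\|u\|^{-1}\gamma(du)$ on $H_0$ under the radial projection $u\mapsto u/\|u\|$. Continuity of $Q^0$ on $S_0$, combined with the general fact that measurable functionals of ergodic stationary processes remain ergodic (invariant events pull back to invariant events for $v^\sigma$), shows that $(Q^0(v^\sigma_t))_{t \geq 0}$ is stationary and ergodic with invariant law $(Q^0)_*\mu$. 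Composing the two successive pushforwards rewrites this law as the image of $\frac{1}{Z}\|u\|^{-1}\gamma(du)$ under the single map $u\mapsto Q^0(u/\|u\|)$, which is exactly the measure announced. No genuinely difficult step arises; the only point requiring care is the $L^2$-metric compatibility of $\overline\nabla^0$, which reduces via $\overline\nabla^0 = P\circ\overline\nabla$ to the elementary $L^2$-orthogonality of the Hodge projection.
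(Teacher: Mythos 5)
Your argument is correct and follows essentially the same route as the paper, whose justification for this corollary is precisely the preceding paragraph: parallel transport for the metric connection $\overline{\nabla}^0$ makes the frame $e^\sigma_t$ an $L^2$-isometry, right invariance transfers the norm to the Eulerian velocity, $Q^0$ restricted to the $H^s$-sphere is the squared $L^2$-norm, and ergodicity plus the pushforward description of $\mu$ from Theorem \ref{PropInvariantMeasure} give the invariant law. Your added remark that the endpoint $\lambda_0^{-s}$ is attained only on a finite-dimensional eigenspace of $\gamma$-measure zero is a small but welcome precision the paper leaves implicit.
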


\medskip

It is desirable to study the homogenization problem for other intrinsically randomly perturbated partial differential equations of geometric nature, such as the KdV, (modified) Camassa-Holm equations, or equations with non-local inertia operator, such as the modified Constantin-Lax-Majda equation \cite{KolevSurvey}. The core technical problem, from the geometric/analytic point of view, is the definition of Cartan development map as the solution map of an ordinary differential equation driven by sufficiently regular vector fields on the configuration space. We took advantage, in the present $L^2$ setting, of the `pointwise' character of the associated geometric objects to recast things in terms of the $F$ bundles of Section \ref{SubsectionConfigurationSpace}. One may have to proceed differently for other weak metrics. We expect the homogenization results proved in Theorem \ref{ThmHomogenizationM} and Theorem \ref{ThmHomogenizationM0} to have analogues in the setting of the strong, complete, Riemannian metrics of \cite{BruverisVialard}. Global in time existence results for kinetic Brownian motion and its limit Brownian motion are expected. We leave these questions for a forthcoming work.

We worked here in the Sobolev setting to make things easier and concentrate on the probabilistic problems, and the implementation of the rough path approach in this infinite dimensional setting. It is a natural question to ask whether one can run the analysis in the Fr\'echet setting of smooth diffeomorphisms of $M$, asking for preservation of the regularity of the initial condition and velocity, as in Ebin-Marsden seminal work -- Section 12 in \cite{EbinMarsden}, under proper assumptions on the noise.

\appendix

\section{Cartan development in $\mathscr{M}_0$}
\label{AppendixCartanM0}

We prove in this Appendix that Cartan's development system \eqref{EqParallelTransportM0} on $\mathscr{M}_0$ can be recast as an ordinary differential equation in $H^s\big(F^{(e)}\big)\times {\sf L}\big(H^s(TM)\big)$, driven by a smooth vector field. It has, as a consequence, a unique solution, up to a possibly finite explosion time.

\medskip

Let $\overline{P} : T\mathscr{M}\rightarrow T\mathscr{M}$, stand for a smooth vector bundle  morphism that coincides with the Hodge projector $P$ from \eqref{EqHodgeProjector} on $T\mathscr{M}_0$. The existence of such a map follows from the following elementary partition of unity result.

\begin{proposition}
\label{thm.partitionsofunity}
Let $(\mathcal O_i)_{i\in I}$ be an open cover of $\mathscr{M}$. Then there exists a smooth partition of unity subordinated to $(\mathcal O_i)_{i\in I}$.
\end{proposition}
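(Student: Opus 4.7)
The plan rests on two facts: $\mathscr{M}$ is paracompact and second countable, and its model Hilbert space $H := H^s(M,TM)$, being separable, admits smooth bump functions of arbitrarily small support. The argument is the standard one for Hilbert manifolds, adapted to the present setting; I sketch it in three steps.

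First, I would observe that $\mathscr{M}$ is a smooth Hilbert manifold modeled on the separable Hilbert space $H$. In particular $\mathscr{M}$ is second countable and locally metrizable, hence metrizable by Urysohn, and thus paracompact. The given open cover $(\mathcal{O}_i)_{i\in I}$ therefore admits a locally finite open refinement $(V_\alpha)_{\alpha\in A}$ whose elements are all contained in chart domains; one may additionally arrange, by refining further, that for each $\alpha$ there exists a chart $\kappa_\alpha : V_\alpha \to H$ with $\kappa_\alpha(V_\alpha)$ an open set in $H$ containing an open ball of positive radius around each of its points.

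Second, I would build smooth bump functions on $H$. The squared Hilbert norm $h \mapsto \|h\|_H^2$ is of class $C^\infty$ on $H$ (this is where the Hilbert structure is essential, as opposed to a general Banach norm). Hence for any $h_0 \in H$ and any $r>0$, composing $h \mapsto \|h-h_0\|_H^2$ with a smooth plateau function $\chi : \mathbb{R}\to[0,1]$ equal to $1$ on $[0,(r/3)^2]$ and supported in $[0,r^2]$ produces a smooth function $\beta_{h_0,r} : H\to[0,1]$ equal to $1$ on the ball of radius $r/3$ around $h_0$ and supported in the closed ball of radius $r$ around $h_0$. Pulling such a function back through a chart $\kappa_\alpha$ and extending it by zero outside $V_\alpha$ yields, for each point $\varphi\in V_\alpha$, a smooth function $\phi_{\varphi,\alpha} : \mathscr{M}\to[0,1]$ supported in $V_\alpha$ and equal to $1$ on a neighborhood of $\varphi$.

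Third, choose for each $\varphi\in\mathscr{M}$ an index $\alpha(\varphi)$ with $\varphi \in V_{\alpha(\varphi)}$ and a function $\phi_\varphi := \phi_{\varphi,\alpha(\varphi)}$ as above. The open sets $\{\phi_\varphi > 0\}$ cover $\mathscr{M}$; using second countability and local finiteness of $(V_\alpha)$, one extracts a countable subcollection $(\phi_n)_{n\geq 0}$ that is still locally finite and still covers $\mathscr{M}$. The sum $\Phi := \sum_n \phi_n$ is then a well-defined, smooth, strictly positive function on $\mathscr{M}$, and $\psi_n := \phi_n/\Phi$ defines a smooth partition of unity subordinated to $(\mathcal{O}_i)_{i\in I}$ (after grouping each $\psi_n$ with an index $i$ such that $V_{\alpha(n)}\subset \mathcal{O}_i$).

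The main obstacle is the construction of compactly supported smooth functions on $\mathscr{M}$: on a general Banach manifold this may fail, since only on Hilbert (or more generally smooth-bump-admitting) models is the squared norm $C^\infty$. The rest of the argument is standard soft topology once paracompactness and smooth bumps are in hand.
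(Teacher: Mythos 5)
The paper states this proposition without proof, labeling it ``elementary,'' so there is no argument of the authors' to compare against; I assess the proposal on its own merits.

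Steps~1 and~2 are fine: $\mathscr{M}$ is a Hausdorff, second countable manifold modeled on a separable Hilbert space, hence metrizable and paracompact, and the smoothness of the squared Hilbert norm is exactly what produces smooth bump functions supported in balls. Step~3, however, contains a genuine gap. You claim that ``using second countability and local finiteness of $(V_\alpha)$, one extracts a countable subcollection $(\phi_n)_{n\geq 0}$ that is still locally finite and still covers $\mathscr{M}$.'' Lindel\"of gives you a countable subcover $\{\phi_n>0\}$, but nothing forces this subcover --- or the family of supports $\{\mathrm{supp}\,\phi_n\}$, which is what you actually need --- to be locally finite. Local finiteness of $(V_\alpha)$ does not help: a single $V_\alpha$ may contain infinitely many of the $\phi_n$'s whose supports accumulate, so $\Phi=\sum_n\phi_n$ need not be well-defined or smooth. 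This is precisely where the infinite-dimensional setting bites: in finite dimensions one would pass through a compact exhaustion, which is unavailable here.

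The standard repair, which is how Lang (\emph{Fundamentals of Differential Geometry}, II.3) handles exactly this situation for separable Hilbert manifolds, is a telescoping device rather than a local finiteness extraction. Choose a countable cover of $\mathscr{M}$ by chart-balls $\kappa_n^{-1}(B(0,r_n))$, each contained with its triple in a chart domain inside some $\mathcal{O}_i$, and let $\psi_n:\mathscr{M}\to[0,1]$ be smooth, equal to $1$ on $\kappa_n^{-1}(B(0,r_n))$ and supported in $\kappa_n^{-1}(B(0,2r_n))$. Then set $g_1:=\psi_1$ and
$$
g_n := \psi_n\prod_{k<n}(1-\psi_k).
$$
Each $g_n$ is smooth with support in $\kappa_n^{-1}(B(0,2r_n))\subset\mathcal{O}_{i(n)}$, and one has $\sum_{k\leq n}g_k = 1-\prod_{k\leq n}(1-\psi_k)$. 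Given any $\varphi$, take the smallest $m$ with $\psi_m=1$ near $\varphi$; on that neighbourhood every $g_n$ with $n>m$ vanishes identically, so the family $(g_n)$ is locally finite \emph{by construction} and $\sum_n g_n\equiv 1$. Grouping by the indices $i(n)$ yields the desired partition of unity. I would encourage you to replace your Step~3 with this telescoping argument (or, equivalently, invoke A.H.~Stone's theorem on $\sigma$-discrete refinements); as written, the local finiteness you rely on is not established.
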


Set 
\begin{align*}
\overline{\frak{H}}^f : T\Hs^s\big(F^{(e)}\big) \times {\sf L}\big(H^s(TM)\big) &\rightarrow T {\sf L}\big(H^s(TM)\big)   \\
\left(\frac{d}{dt}_{\big| t=0}\big(\varphi_t(\cdot),e_t(\cdot)\big),\, f\right) &\mapsto \frac{d}{dt}_{\big| t=0}\left({\bf X} \mapsto  e_t^{-1}\Big(\overline{P}\big(e_t(f({\bf X}))\big)\Big)\right).
\end{align*}
The letter $\bf X$ stands for a generic element of $H^s(TM)$, and 
$$
T{\sf L}\big(H^s(TM)\big) = {\sf L}\big(H^s(TM)\big).
$$ 
We give the details of the following elementary result.

\begin{lemma}
The map $\overline{\frak{H}}^f$ is well-defined and smooth.
\end{lemma}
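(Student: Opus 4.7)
The plan is to recognize $\overline{\frak{H}}^f$ as the partial differential, with respect to the first argument, of the auxiliary map
\[
\Phi : H^s\big(F^{(e)}\big) \times {\sf L}\big(H^s(TM)\big) \longrightarrow {\sf L}\big(H^s(TM)\big), \qquad \Phi\big((\varphi,e),f\big)({\bf X}) := e^{-1}\Big(\overline P\big(e(f({\bf X}))\big)\Big).
\]
Here $e\in H^s(F^{(e)})$ is canonically identified with the bounded operator ${\bf X}\mapsto\big(x\mapsto e(x)({\bf X}(x))\big)$ from $H^s(TM)=T_\textrm{Id}\mathscr M$ to $T_\varphi \mathscr M = H^s(F^{(v)})_\varphi$, and similarly for $e^{-1}$. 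Once $\Phi$ is known to be smooth, the chain rule immediately gives that the expression defining $\overline{\frak{H}}^f\big(\frac{d}{dt}\big|_{t=0}(\varphi_t,e_t),f\big)$ depends only on the $1$-jet $\big((\varphi_0,e_0);(\dot\varphi_0,\dot e_0)\big)$ and $f$, and is smooth in these data.

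It remains to prove that $\Phi$ is smooth, which I would do by factoring it through a sequence of smooth building blocks. First, the bilinear evaluation $(f,{\bf X})\mapsto f({\bf X})$ on ${\sf L}(H^s(TM))\times H^s(TM)$ is continuous, hence smooth. Next, the action $(e,{\bf Y})\mapsto e({\bf Y})$ is induced by the smooth finite-dimensional bundle morphism $F^{(e)}\times_{M\times M}F^{(w)}\to F^{(v)}$, $(x,y;e,w)\mapsto(x,y;e(w))$, already highlighted in Section \ref{SubsectionConfigurationSpace}; the Omega lemma (item 4) then yields that it is smooth between the corresponding $H^s$ spaces of sections. The smoothness of the action of $\overline{P}$ is built into the choice of $\overline P$ as a smooth vector bundle morphism. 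Finally, the map $(e,v)\mapsto e^{-1}(v)$ descends from the smooth bundle morphism $F^{(e,v)}\to F^{(w)}$, $(x,y;e,v)\mapsto(x,y;e^{-1}(v))$, which is smooth because inversion on the compact Lie groups ${\sf O}(T_xM,T_yM)$ is smooth and the dependence on $v$ is linear; Omega lemma again transports this pointwise smoothness to smoothness at the level of $H^s$ sections. Composing these four smooth operations produces $\Phi$.

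The main point to watch is the $e^{-1}$ step: one has to make sure that the base map $(\varphi,e)\mapsto(\varphi,e^{-1})$ is a smooth self-map of $H^s(F^{(e)})$, i.e., that the operator inverse is compatible with $H^s$-regularity. This follows from the fact that the pointwise inverse on the orthogonal group is a smooth map of finite dimensional manifolds, so item~4 of Section \ref{SubsectionConfigurationSpace} applies to the bundle morphism $F^{(e)}\to F^{(e)}$, $(x,y;e)\mapsto (y,x;e^{-1})$; combined with the smooth bundle morphism above, this gives the desired smoothness of the action $(e,v)\mapsto e^{-1}(v)$ jointly in its two arguments. With all four pieces smooth, the composition $\Phi$ is smooth, and the lemma follows.
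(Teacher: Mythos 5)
Your reduction to the smoothness of the curried map $\Phi$, and your use of the Omega lemma for the pointwise operations $(e,w)\mapsto e(w)$, $\overline P$, and $(e,v)\mapsto e^{-1}(v)$, match the first half of the paper's argument. The gap is in the final step, ``composing these four smooth operations produces $\Phi$.'' What the composition of your building blocks actually yields is smoothness of the \emph{evaluated} map
\[
\Big(\big(\varphi,e\big),f,{\bf X}\Big)\;\longmapsto\; e^{-1}\Big(\overline P\big(e(f({\bf X}))\big)\Big)\in H^s(TM),
\]
jointly in its three arguments. The map $\Phi$ you need is the curried version, with values in the Banach space ${\sf L}\big(H^s(TM)\big)$ equipped with the operator norm, and in infinite dimensions smoothness of the evaluation does not formally imply smoothness -- nor even well-definedness, i.e.\ boundedness of the values and existence of the $t$-derivative in operator norm -- of the curried map. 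This is precisely the point the paper isolates and proves: for a Banach manifold $A$, a Hilbert space $H$, and a smooth $F:A\times H\to H$ that is \emph{linear in its second argument}, the curryfication $a\mapsto F(a,\cdot)\in{\sf L}(H)$ is smooth. The proof is short but not free: one shows $d(\mathrm{Cur}\,F)=\mathrm{Cur}(\partial_aF)$ via a Taylor estimate in $a$ whose remainder must be controlled \emph{uniformly over the unit ball in the $H$-variable}; this uses local boundedness of $\partial_a^2F$ on a neighbourhood $\mathcal U\times B(0,\eps)$ together with the homogeneity in $w$ supplied by linearity (whence the factor $|w|/\eps$ in the paper's estimate), and one then bootstraps to all orders. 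Your proposal needs this currying lemma, or an equivalent exponential-law argument, to close; without it the conclusion does not follow from the four building blocks.

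A secondary, repairable point: the base-swapping map $(x,y;e)\mapsto(y,x;e^{-1})$ is not a bundle morphism over $M$ in the sense of item 4 of Section \ref{SubsectionConfigurationSpace}, since it does not cover the identity of the base. The correct way to handle inversion, which the paper records and which you also cite, is through the morphism $F^{(e,v)}\to F^{(w)}$, $(x,y;e,v)\mapsto\big(x,y;e^{-1}(v)\big)$; that alone suffices, and the extra discussion of a self-map of $F^{(e)}$ should be dropped.
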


\begin{proof}
It is enough to prove that the map
\begin{align*}
H^s(F^{(e)})\times {\sf L}\big(H^s(TM)\big) &\rightarrow {\sf L}\big(H^s(TM)\big)   \\
\Big(\big(\varphi(\cdot), e(\cdot)\big), f\Big) &\mapsto \left({\bf X}\mapsto e^{-1}\Big(\overline{P}\big(e(f(\bf X))\big)\Big)\right)
\end{align*}
is smooth. Since the map
\begin{align*}
H^s(F^{(e)})\times {\sf L}\big(H^s(TM)\big)\times H^s(TM) &\rightarrow H^s(TM)   \\
\Big(\big(\varphi(\cdot), e(\cdot)\big), f, {\bf X}\Big) & \mapsto e^{-1}\Big(\overline{P}\big(e(f(\bf X))\big)\Big)
\end{align*}
is smooth, the problem reduces to the following question. Let a Banach manifold $A$ and a Hilbert space $H$, be given together with a smooth map $F : A\times H\rightarrow H$, that is linear with respect to its second argument. Denote by $a$ and $b$ generic elements of $A$. Prove that the curryfication $\mathrm{Cur}\,F:a\in A\mapsto F(a,\cdot)\in {\sf L}(H)$ is well-defined and smooth.

\smallskip

Write $d$ for the differential operator. We show that $d(\mathrm{Cur}\,F)=\mathrm{Cur}\,(\partial_aF)$. This will be enough, since we can then bootstrap the construction to show that $d^n(\mathrm{Cur}\,F)=\mathrm{Cur}\,(\partial_a^nF)$, is differentiable for any $n$. Because the result is local, we can assume without loss of generality that $A$ an open set of a Banach space. Fix $a\in M$, and let $\mathcal U\times B(0,\eps)$ be a convex neighbourhood of $(a,0)$ in $A\times H$, such that $\|\partial_a^2F\|_\infty<1+\|\partial_a^2F(a,0)\|$. Then for all $b\in\mathcal U$ and $|w|<1$, one has
$$
\Big| F(b,w)-F(a,w) - \partial_a F(a,w)(b-a) \Big| \leq \frac{|b-a|^2}2\|\partial_a^2F\|_\infty \, |w|/\epsilon.
$$
The conclusion follows from the fact that we have in particular the estimate
$$
\Big\|\mathrm{Cur}F(b)-\mathrm{Cur}F(a) - \mathrm{Cur}(\partial_a F)(a;b-a)\Big\| \leq c\,|b-a|^2,
$$
for a positive constant $c$ independent of $b$.
\end{proof}

Choose now a $\mathcal C^1$ path $({\bf X}_t)$ with values in $T_\textrm{Id}\mathscr{M}_0$, and zero initial condition. Let $\big((\varphi_t, e_t), f_t)$ be the solution in $H^s(F^{(e)})\times {\sf L}\big(H^s(TM)\big)$ of the equation
\begin{equation}\begin{split}   \label{EqProofCartanM0}
\frac{d}{dt}\,(\varphi_t, e_t) & = \overline{\frak{H}}^e\Big(\varphi_t, e_t ; e_t\big(f_t(\dot{\bf X}_t)\big)\Big),   \\
\frac{d}{dt}\,f_t & = \overline{\frak{H}}^f\left(\frac{d}{dt}\,(\varphi_t, e_t), f_t\right),
\end{split}\end{equation}
with initial condition $e_0=\Id_{T\M}$, and $f_0 = \Id_{H^s(TM)}$. Since the vector field $(\overline{\frak{H}}^e, \overline{\frak{H}}^f)$ is smooth, equation \eqref{EqProofCartanM0} is locally well-posed, possibly up to a finite explosion time $\zeta$.

\begin{theorem}
\label{ThmCartanM0}
The path $(\varphi_t)$ takes values in $\mathscr{M}_0$, and coincides with the Cartan development of $({\bf X}_t)$. We further have $\dot \varphi_t = e_t\big(f_t(\dot {\bf X}_t)\big)$, so the dynamics \eqref{EqProofCartanM0} does not depend on the extension $\overline{P}$ of the Hodge projector $P$ used in the definition of $\overline{\frak{H}}^f$.
\end{theorem}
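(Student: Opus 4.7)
My plan is to identify the path $(\varphi_t)$ as the Cartan development of $({\bf X}_t)$ in $\mathscr{M}_0$ by showing that the composite map $g_t := e_t \circ f_t$, restricted to the divergence-free subspace $T_{\textrm{Id}}\mathscr{M}_0 \subset H^s(TM)$, realises parallel transport with respect to the connection $\overline\nabla^0$ on $\mathscr{M}_0$. The proof proceeds in three stages: unpack the defining equation for $f_t$ into an operator identity; use it to prove that the Eulerian velocity $\dot \varphi_t \circ \varphi_t^{-1}$ is divergence-free, whence $\varphi_t \in \mathscr{M}_0$; and finally verify the parallel-transport condition. The independence from the extension $\overline P$ is not proved directly but extracted afterwards from the intrinsic nature of the Cartan development on $\mathscr{M}_0$.

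For the proof I would choose $\overline P$ to be the right-invariant extension $\overline P_{\varphi} := dR_\varphi \circ P \circ dR_{\varphi^{-1}}$, defined on a neighbourhood of the identity inside the $H^s$-diffeomorphism subgroup of $\mathscr{M}$. This extension is idempotent and coincides with the Hodge projector on $T\mathscr{M}_0$. Setting $P_t := e_t^{-1} \overline P_{\varphi_t} e_t \in {\sf L}(H^s(TM))$, the defining formula of $\overline{\frak{H}}^f$ unpacks immediately as $\dot f_t = \dot P_t \circ f_t$, where $\dot P_t$ denotes the time derivative of the operator-valued family $(P_t)$. The idempotency $P_t^2 = P_t$ yields the commutation relation $P_t \dot P_t = \dot P_t (\textrm{Id} - P_t)$, which is the workhorse of what follows.

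The key step is then: for any ${\bf Y} \in T_{\textrm{Id}}\mathscr{M}_0$, the quantity $E_t := (\textrm{Id} - P_t) f_t({\bf Y})$ satisfies a homogeneous linear ODE. Indeed $E_0 = {\bf Y} - P({\bf Y}) = 0$ because ${\bf Y}$ is divergence-free, and differentiating while substituting $\dot f_t = \dot P_t f_t$ together with the commutation relation yields $\dot E_t = -\dot P_t E_t$, so $E_t \equiv 0$. Translated back, this says $\overline P_{\varphi_t}(g_t({\bf Y})) = g_t({\bf Y})$. Applying this to ${\bf Y} = \dot{\bf X}_t$, which lies in $T_{\textrm{Id}}\mathscr{M}_0$ by hypothesis, and exploiting the right-invariant form of $\overline P$, one concludes that $u_t := \dot \varphi_t \circ \varphi_t^{-1}$ is divergence-free on $M$. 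The transport identity $\frac{d}{dt}(\varphi_t^* \textsc{Vol}) = \varphi_t^*(\mathrm{div}(u_t)\,\textsc{Vol}) = 0$ then forces $\varphi_t^* \textsc{Vol} = \textsc{Vol}$, i.e.\ $\varphi_t \in \mathscr{M}_0$.

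Knowing $\varphi_t \in \mathscr{M}_0$, the map $g_t$ restricts to a bounded linear family $T_{\textrm{Id}}\mathscr{M}_0 \to T_{\varphi_t}\mathscr{M}_0$ with $g_0 = \textrm{Id}$ and $\dot \varphi_t = g_t(\dot{\bf X}_t)$. Proposition \ref{PropParallelTransportM} applied to $v_t = g_t({\bf Y})$, combined with the splitting \eqref{EqDerivativeSplitting} and the fact that $e_t$ is $\overline\nabla$-parallel in its fixed first argument, gives $\overline\nabla_{\dot\varphi_t} g_t({\bf Y}) = e_t(\dot f_t({\bf Y})) = e_t(\dot P_t f_t({\bf Y}))$. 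Projecting via $P_{\varphi_t} = \overline P_{\varphi_t}$ and using the commutation relation together with $E_t \equiv 0$ then gives $\overline\nabla^0_{\dot\varphi_t} g_t({\bf Y}) = 0$, so $(\varphi_t, g_t)$ is the Cartan development of $({\bf X}_t)$ in the sense of Definition \ref{DefnCartanDevM0}. Since the Cartan development is intrinsic to $\overline\nabla^0$, the $\varphi_t$-component is insensitive to the chosen extension $\overline P$, giving the final assertion. The principal obstacle is the bootstrap in the third stage: the purely algebraic identity $\overline P(g_t({\bf Y})) = g_t({\bf Y})$ only yields the geometric statement $\varphi_t \in \mathscr{M}_0$ because of the specific right-invariant form of $\overline P$; without such a careful choice one faces a circular interplay between divergence-freeness of the velocity and volume preservation, and a bootstrap would be needed to close the argument.
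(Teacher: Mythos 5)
Your proof rests on the same mechanism as the paper's: isolate the component of $e_t\big(f_t({\bf Y})\big)$ transverse to $T\mathscr{M}_0$, show via the idempotency identity $P_t\dot P_t=\dot P_t(\mathrm{Id}-P_t)$ that it solves a homogeneous linear ODE with zero initial condition, conclude that it vanishes, and then read off both volume preservation and the parallel transport property. The difference is in how the idempotent projector is produced. The paper's $\overline P$ comes from a partition of unity and is \emph{not} idempotent off $T\mathscr{M}_0$; accordingly it runs your ODE argument only locally, with a locally chosen idempotent extension $\widehat P$ near each time $t_0$, and closes the proof with an open--closed connectedness argument on the set of times where $e_t\circ f_t$ maps $T_{\mathrm{Id}}\mathscr{M}_0$ into divergence-free sections and $\varphi_t$ preserves the volume. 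You instead make one global choice, the right-invariant extension $\overline P_\varphi=dR_\varphi\circ P\circ dR_{\varphi^{-1}}$, which eliminates the connectedness step and makes the argument a single ODE uniqueness statement.

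Two points need repair. First, the smoothness of $\varphi\mapsto dR_\varphi\circ P\circ dR_{\varphi^{-1}}$ away from $\mathscr{M}_0$ is not free: composition with $\varphi^{-1}$ is not a smooth operation in $\varphi$ on $H^s$ spaces, and the paper only asserts smoothness of $P_\varphi$ for $\varphi\in\mathscr{M}_0$ --- this is exactly why it falls back on a partition-of-unity extension and merely local idempotent ones. An Ebin--Marsden-type argument does give smoothness of the conjugated Hodge projector over a neighbourhood of the identity in the diffeomorphism group, but this is a genuine analytic input that you assume silently, and it is also needed for the well-posedness of \eqref{EqProofCartanM0} with your choice of $\overline P$ (the standing hypothesis of the theorem). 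Second, your final appeal to the ``intrinsic nature of the Cartan development'' to get independence of the extension is circular as stated: you have only shown that the solution for your particular $\overline P$ is a Cartan development, not that the solution for an arbitrary smooth extension is. The short correct argument is that once the solution is known to stay in $T_{\mathscr{M}_0}\mathscr{M}$ with velocity tangent to it, any two smooth extensions of $P$ agree there together with their derivatives in tangential directions, so the driving vector fields of \eqref{EqProofCartanM0} coincide along the solution and ODE uniqueness transfers the conclusion to the paper's $\overline P$. With these two points filled in, your proof is complete and is, in substance, the paper's.
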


\begin{proof}
Let ${\bf Y}\in T_\textrm{Id}\mathscr{M}_0$, be a fixed divergence-free vector field on $M$. We need to show that
$$ 
\Nabla^0_{\dot \varphi_t} e_t({\bf Y}) = 0,
$$
on the whole time interval $[0,\zeta)$.  From Proposition \ref{PropParallelTransportM0}, this is equivalent to showing that we have
$$
\frac{d}{dt}\, \Big(\varphi_t, e_t\big(f_t({\bf Y})\big)\Big) = dP\Big(\overline{\frak{H}}^{(v)}\Big(\varphi_t, e_t\big(f_t({\bf Y})\big) ; \dot \varphi_t\Big)\Big). 
$$
Look at the function 
$$
(\varphi, e, {\bf Z})\mapsto \big(\varphi, e({\bf Z})\big),
$$
from $H^s(F^{(e)})\times T_{\textrm{Id}}\mathscr{M}$ to $H^s(F^{(v)})$, and set 
$$
\frak{F} := \partial_{(\varphi,e)}\Big\{(\varphi, e, {\bf Z})\mapsto \big(\varphi, e({\bf Z})\big)\Big\}.
$$ 
We have
\begin{equation*}\begin{split}
\frac{d}{dt}\,&\Big(\varphi_t, e_t\big(f_t({\bf Y})\big)\Big)    \\
&= \frak{F}\left(\frac{d}{dt}\,(\varphi_t,e_t), f_t({\bf Y})\right) - \frak{F}\left(\frac{d}{dt}\,(\varphi_t,e_t), e_t^{-1}\Big(\overline{P}\big(e_t(f_t({\bf Y}))\big)\Big)\right)    \\
&\quad+ d\overline{P}\left(\frak{F}\Big(\frac{d}{dt}\,(\varphi_t,e_t), f_t({\bf Y})\Big)\right). 
\end{split}\end{equation*}
We prove that $e_t(\bf Y)$ is divergence-free. Define for that purpose the subset $I\subset[0,\zeta)$ of times $t$ such that $e_t(\bf Z)$ is divergence-free for all ${\bf Z}\in T_\textrm{Id}\mathscr{M}_0$, and $\varphi_t$ preserves the volume form. It is a non-empty closed subset of $[0,\zeta)$. Fix $t_0\in\ I$. It suffices to prove that $t_0$ is in the interior of $I$ for a well-chosen extension $\widehat{P}$ of $P$, possibly different from $\overline{P}$. We choose for $\widehat{P}$ any smooth extension of $P$ defined on a neighbourhood of $\varphi_{t_0}$, such that $\widehat{P}\circ\widehat{P} = \widehat{P}$. Set $\widehat{Q} := \Id - \widehat{P} : T\mathscr{M}\to T\mathscr{M}$, so for a fixed ${\bf Z}\in T_\textrm{Id}\mathscr{M}_0$, the quantity 
$$
Z_t := \widehat{Q}\big(e_t(f_t({\bf Z}))\big)
$$ 
satisfies the equation
\begin{equation*}\begin{split}
\frac{d}{dt}\,Z_t &= d\widehat{Q}\left(\frak{F}\left(\frac{d}{dt}\,(\varphi_t,e_t), e_t^{-1}\Big(\widehat{Q}(e_t[f_t({\bf Z})])\Big)\right)\right)   \\
&= d\widehat{Q}\left(\frak{F}\Big(\frac{d}{dt}\,(\varphi_t,e_t), e_t^{-1}(Z_t)\Big)\right).
\end{split}\end{equation*}
This differential equation satisfies the classical Picard-Lindel\"of assumptions, so it has a unique solution with given initial condition. Since $Z_0=0$ and the constant zero vector field is a solution to the equation, $Z_t$ is identically zero, and $e_t({\bf Z})$ is divergence-free. 

This holds true for any $\bf Z$, in a time interval independent of $\bf Z$. It follows in particular that $\dot \varphi_t = e_t\big(f_t(\dot {\bf X}_t)\big)$ is locally divergence-free, and $\varphi_t$ preserves the volume form, in a neighbourhood of the time $t_0$. The interval $I$ is thus both closed and open, so $I=[0,\zeta)$. The statement of Theorem \ref{ThmCartanM0} follows, since $P\big(e_t(f_t({\bf Y}))\big) = e_t\big(f_t(\bf Y)\big)$, so we get
\begin{equation*}\begin{split}
\frac{d}{dt}\, \Big(\varphi_t,e_t\big(f_t({\bf Y})\big)\Big) &= d\overline{P}\left(\frak{F}\Big(\frac{d}{dt}\,(\varphi_t,e_t), f_t({\bf Y})\Big)\right)   \\
&= d\overline{P}\left(\frac{d}{ds}_{\big|s=t} \Big(\varphi_s, e_s\big(f_t({\bf Y})\big)\Big)\right)   \\
&= dP\Big(\overline{\frak{H}}^{(v)}\Big(\varphi_t, e_t\big(f_t({\bf Y})\big) ; \dot \varphi_t\Big)\Big),
\end{split}\end{equation*}
using Proposition \ref{PropCartanM0Argument} in the last equality. 
\end{proof}

\bibliographystyle{alpha}
\bibliography{refs}

\end{document}